\documentclass{nhm}
\usepackage[latin1]{inputenc}
\usepackage[T1]{fontenc}
 \usepackage{color}
\usepackage{subfigure}
 
 %% Warning: when you first run your tex file, some errors might occur, please just
   % press x to end the compile process,  then it will be fine if you run your tex file again.
   % Note that it is highly recommended by AIMS to use this package.

  \textheight=8.2 true in
   \textwidth=5.0 true in
    \topmargin 30pt
     \setcounter{page}{1}

% The next 5 line will be will be entered by an editorial staff

%\renewcommand{\baselinestretch}{2}
%
% EDITORS : 
% Camillo De Lellis

%\usepackage{epsf, amssymb}
%\usepackage[francais]{babel}%\usepackage{french}
%\usepackage{amsfonts}
%\usepackage{amssymb}

%\usepackage[french]{babel}

%\usepackage{makeidx}
\usepackage[dvips]{graphics}
\usepackage{rotate}
\usepackage{epsf}
\usepackage{graphicx}
\usepackage{epsfig} 
\usepackage{amsmath}
%  CHANGEMENTS
\usepackage[psamsfonts]{amssymb}
\usepackage[psamsfonts]{eucal}
\usepackage{psfrag}

%\newtheorem{theorem}{Theorem}[section]
%\newtheorem{corollary}[theorem]{Corollary}

%\newtheorem{lemma}[theorem]{Lemma}
%\newtheorem{proposition}[theorem]{Proposition}
%\newtheorem{conjecture}{Conjecture}
%\newtheorem{definition}[theorem]{Definition}
%\newtheorem{remark}[theorem]{Remark}

%\numberwithin{equation}{section}

\newtheorem{theorem}{Theorem}[section]

\newtheorem{proposition}{Proposition}

\theoremstyle{definition}
\newtheorem{definition}[theorem]{Definition}

%\newtheorem{proof}{Proof}
%   A ENLEVER ENSUITE
%\def \proof {\noindent {\bf Proof.\ }}
\def \endproof {\phantom {.} \hfill $\square$}
\def \qq   {{\bf q}}
\def \vv   {{\bf v}}
\def \ww   {{\bf w}}
\def \uu   {{\bf u}}
\def \UU   {{\bf U}}

\def \GG   {{\bf G}}
\def \lcal {\mathcal{L}}
\def \PP {\mathcal{P}}
\def \setR {\mathbb{R}} 
 
\def \set#1{\left\{{#1}\right\}} 
\def \abs#1{\left | {#1}\right | }  % ensemble
\def \virg {\, , \,\,} 
\def \argmin  {\mathop{\textmd{ argmin }}}
\def \supp  {\mathop{supp}}
\def \para #1 { \noindent {\bf #1. }}
\def \vseq {\vspace{6pt}}
\def \BE {\begin{equation}}
\def \EE {\end{equation}}
\def \OO {\Omega}

\def \bft {{\bf t}}
\def \ONE {\mathbf{1}}

\def \ee  {{\bf e }}

%%% AJOUT JULIETTE 

\def \nn  {{\bf n}}
\def \dd  {{\bf d}}

\newcommand{\eps}{\varepsilon}

\hyphenation{bet-ween}
\hyphenation{sub-dif-fe-ren-tial}
\hyphenation{con-fi-gu-ra-tion}
\hyphenation{ve-lo-ci-ty}
\hyphenation{an-ti-ci-pa-ted}
\hyphenation{in-di-vi-du-al}
\hyphenation{cha-rac-ter}
%\newcommand{\eps}[1]{{#1}_{\varepsilon}}
%\newtheorem{proof}{Proof}

%*****************************
\title[Handling congestion in crowd motion modeling
]
      {%The Full Title of Your Paper: Sample \LaTeX\ File for AIMS Journals 
      Handling congestion in crowd motion modeling}

%% Place all authors' names in [ ] shown as running head;
 %% No more than 40 letters. Leave { } empty
 %% Please use `and' to connect the last two names if appliable
 
\author[B. Maury, A. Roudneff-Chupin, F. Santambrogio and J. Venel]{}

%% It is required to enter MSC and Keywords.
\subjclass{
%34A60, 47H04, 70F35, 90C46,
49K24, 34G25, 35R70, 35F31}
 \keywords{crowd motion model; Wasserstein distance; contact dynamics; differential inclusion}

%35F31  	Initial-boundary value problems for nonlinear first-order equations
%	35R70  	Partial differential equations with multivalued right-hand sides
% 	34G25  	Evolution inclusions
% 49K24  Problems involving differential inclusions

%% Email address of the comuncating author is required.
%% You may list email addresses of all other authors, seperatly.
\email{Bertrand.Maury@math.u-psud.fr}
\email{Aude.Roudneff@math.u-psud.fr}
\email{Filippo.Santambrogio@math.u-psud.fr}
\email{Juliette.Venel@univ-valenciennes.fr}

%% Put your short thanks below. For long thanks/acknowlegements,
%%please go to the last acknowlegement section.

%\thanks{This work has been supported by ACI NIM project {\em LePoumonVousDisJe}}

%***********************

\begin{document}
\maketitle

%\tableofcontents

%\vfill

%\thanks{2000 Mathematics Subject Classification : 05C05, 46E35, 46E39 }
% 05C05 Trees
% 46E35 Sobolev spaces and other spaces of ``smooth'' functions, embedding theorems, trace theorems
%46E39 Sobolev (and similar kinds of) spaces of functions of discrete variables

%\thanks{Key words and phrases : Infinite resistive tree, trace theorems, Sobolev spaces, Besov spaces}
%\clearpage

\centerline{\scshape B. Maury, A. Roudneff-Chupin, F. Santambrogio}
\medskip
{\footnotesize
 %% please put the address of the first author
 \centerline{Laboratoire de Math\'ematiques d'Orsay, Universit\'e Paris-Sud,}
   \centerline{91405 Orsay Cedex, France}
} %% Do not forget to end the {\footnotesize by the sign }

\medskip

\centerline{\scshape J. Venel }
\medskip
{\footnotesize
 %% please put the address of the second author
 \centerline{LAMAV, Universit\'e de Valenciennes et du Hainaut-Cambr\'esis Mont Houy}
   \centerline{59313 Valenciennes Cedex 9, France}
} %

\bigskip

%% The name of the associate editor will be entered by an editorial staff
% \centerline{(Communicated by the associate editor name)}

\begin{abstract}
We address here the issue of congestion in the modeling of crowd
motion, in the non-smooth framework: contacts between people are not
anticipated and avoided, they actually occur, and they are explicitly taken into account in the model. 
We limit  our approach  to  very basic principles in terms of   behavior,   to focus on the particular problems raised by the non-smooth character of  the models.  We consider that individuals tend to move according to a desired, or spontanous, velocity. We account for congestion by assuming that the evolution realizes at each time  an instantaneous balance between individual tendencies and global constraints (overlapping is forbidden):
the actual velocity  is defined as the closest to the desired velocity among all admissible ones, in a least square sense. 
We develop those principles in the microscopic and macroscopic settings, and we present  how the framework of Wasserstein distance between measures allows to recover  the {\em sweeping process} nature of the problem on the macroscopic level, which makes it possible to obtain existence results in spite of the non-smooth character of the evolution process. Micro and macro approaches are compared, and we investigate the similarities  together with deep differences  of those two levels of description.

\end{abstract}

%\tableofcontents

\section{Introduction}

Congestion phenomena in population dynamics cover a wide range of mechanisms, which can be classified according  to their stiffness~:
\begin{enumerate}
\item Soft congestion: as the distance between individuals becomes smaller, the behavior of a single person is affected by the presence of others;

\item Hard congestion: actual contacts between individuals occur, and the overall motion  is perturbed by the fact that two persons may not occupy the same place at the same time. 

\end{enumerate}

%XXX Soft congestion models have been formalized in the form of social forces by Helbing and coworkers (see~\cite{Helb1,Helb2, Helb3}). 
%XXX \cite{Helb1} est un modele fluide 
%XXX detailler la biblio en soft (micro et macro). Il faudrait qu'on soit vraiment complet la dessus, vu le type de papier demande. 

%{\bf
 Soft congestion models can be included in both microscopic and macroscopic models. 
In the first class there are discrete-space models like cellular
automata-based models which constrain pedestrians to be located at
squares of a fixed grid~\cite{Blue,Burstedde,Nagel,Schad, Schadant}. We
can also mention models based on networks as route choice
models~\cite{BorgersTim0, BorgersTim} or queuing models~\cite{Lovas,
  Yuhaski}. A lot of evacuation softwares rest on such models
as for example buildingExodus in ~\cite{Exodus}.  
Some microscopic models are space continous as the social force model introduced in~\cite{Helb3} and its forerunner which has been proposed by~\cite{GippsMark}. In~\cite{Helb3}, people are identified to particles submitted to the laws of Newtonian mechanics.
%} 

As for macroscopic models, soft congestion is also usually favored. Some of these models (see for example~\cite{Col},~\cite{Cha1} and~\cite{Cha2}) own their origins in vehicular traffic models, and deal with the one-dimensional case. In higher dimension, similarly to the microscopic case, many models rely on social forces. One solution, used e.g. by Bellomo and Dogbe in~\cite{Bel} and~\cite{Dog}, or Degond in~\cite{Deg}, is to add to the equation satisfied by the velocity a repulsive term that forces people to avoid high density areas. Another possibility is to modify directly the velocity by adding a term that make people deviate from their preferred path as soon as they approach a  crowded area. The modeling of the velocity is the key point of these strategies, see for example the work of Hughes~\cite{Hug1},~\cite{Hug2}, Coscia~\cite{Cos}, or Piccoli~\cite{Pic1},~\cite{Pic2}.

\medskip

We aim here at addressing the particular issues  pertaining to {\em hard} congestion. In this spirit, we shall make very crude assumptions on the  social aspects triggering crowd dynamics (see Section~\ref{sec:ext} for possible improvements on these aspects), and simply consider that, at any time, a spontaneous (or desired) velocity field is given, and that it is purely {\em reptilian}: everyone's  priority is to achieve his own goals, without accounting for others. Actual behavior shall result from some sort of compromise between individual tendencies and congestion constraints. 
More precisely, 
we shall consider that the actual velocity is defined as the projection of the spontaneous one onto the set of feasible velocities (i.e.\ which do not lead to a violation of the non-overlapping constraint).

Those basic principles can be applied to microscopic and macroscopic descriptions of the crowd. In the microscopic setting, the degrees of freedom are  the positions of individuals (identified to rigid disks), and the non-overlapping constraints can be written straightforwardly by prescribing a minimal value for the distance between centers. 
The problem takes the form of a differential inclusion which fits into the general framework of sweeping processes introduced by Moreau in the 70's (see~\cite{Mor}). 
This approach has been extended more recently to non convex cases, namely the case of
uniformly prox-regular sets in \cite{Colombo}, in \cite{Thibsweep} and later in \cite{Monteiro}. The perturbed (i.e. inhomogeneous) problem has been studied in \cite{Thibnonconv, Thibsweep, Thi1, Thi2}. As we shall see, it gives a natural framework for the microscopic model  we propose. 

In the macroscopic setting, the crowd is seen as a population density and the non-overlapping constraint consists in prescribing a maximal value for this density.
Although both microscopic and macroscopic models express the same type of modeling assumptions, the mathematical structure of the macroscopic model is less obvious. Expressed in the Eulerian framework as a transport equation by the actual velocity field, which is defined as the projection of the spontaneous one onto the set of feasible velocities, it involves a conservative transport equation by a field  whose regularity cannot be  controlled a priori,  so that classical results cannot be used. A first attempt to address those issues was proposed in~\cite{MRS10}, in the case where the spontaneous velocity is the gradient of a given dissatisfaction function (typically the distance to the exit for the evacuation of a building):  the framework of optimal transportation allows to reformulate the problem as a gradient flow in the Wasserstein space of measures (see e.g. \cite{Vil1} for an overview of the theory of optimal transport, and \cite{Amb} for the notion of gradient flow in
  this setting).  We show here that the sweeping process framework, which does not rely on any  assumption on the gradient nature of the forcing term, can be extended in the macroscopic situation.

Stemming from similar principles, both microscopic and macroscopic models present nevertheless deep differences. For instance we shall point out the fact that the notion of maximal  density is not properly  defined  in the microscopic setting. Thus the macroscopic one cannot be expected to be obtained from the microscopic one by any homogenization process.

We shall focus here on the particular issues related to hard congestion. In order to highlight the very  problems it raises, we will favor a very basic form of the crowd motion model. In particular we shall disregard in the main part of this paper  any social or strategical aspects of human behavior. 
We are aware of the very crude character of these assumptions, and we gathered in Section~\ref{sec:ext}  other aspects of pedestrian behavior that could be included in our approach. 
Besides, we hope that the framework presented here, and the issues we raise in attempting to establish links between microscopic and macroscopic settings, may be fruitful in other contexts, e.g.  micro-macro issues in granular flows or modeling of chemotactic motion of large populations of cells. 
%In a more abstract manner, we establish 

\section{Hard congestion models}

\subsection{Microscopic setting}

We consider  a population of $N$ individuals, identified with rigid disks centered at $\qq_1$, $\qq_2$, \dots , $\qq_N$, with common radius $r>0$ (it can be extended straighfowardly to the so-called {\em polydisperse} case, i.e. with different radii).   
We denote by 
$$
\UU(\qq) = (\UU_1(\qq),\dots,\UU_N(\qq))
$$
 the generalized spontaneous velocity of the crowd ($\UU_i$ is the velocity which the individual $i$ would like to have in the absence of others). We assume here that the desired velocity of $i$ depends on his  location only, and  that  this dependence  is the same for all individuals (see Section~\ref{sec:ext} for more general behavioral models):
$$
\UU_i = \UU_0(\qq_i),
$$
where $\UU_0$ is a given field. 
%
%The model is based on a so-called {\em desired} (or {\em spontaneous}) velocity $\UU_0$, 
%We define $\UU_0(\xx)$ as the desired velocity 
%We denote by $\UU(\qq) = (\UU_1(\qq),\dots,\UU_N(\qq))$ the generalized spontaneous velocity. 
%
The set of feasible configurations is defined as
$$
K = \set{\qq \in \setR^{2N}\virg D_{ij}(\qq) = \abs {\qq_j - \qq_i } - 2 r \geq 0 \virg \forall i \neq j}.
$$
We disregard here obstacles (like walls, furniture) to alleviate notations, but they can be included in the definition of $K$ straightforwardly.
Non-overlapping is preserved by prescribing that  $D_{ij}$ may not decrease if it is $0$, i.e.\
$\dot D_{ij} = \GG_{ij}(\qq)\cdot \dot \qq\geq 0 $ where $\GG_{ij}(\qq) = \nabla D_{ij}$. It 
 leads to define the set of feasible velocities as
\BE
\label{eq:Cqdef}
C_\qq = \set{\vv \in \setR^{2N}\virg D_{ij}(\qq)=0 \Longrightarrow \GG_{ij}(\qq)\cdot \vv \geq 0 \ }.
\EE
The model simply writes
\BE
\label{eq:pbmic}
\frac {d\qq}{dt } = P_{C_\qq} \UU(\qq)
\EE
where the projection is performed according to the euclidean norm over $\setR^{2N}$.\\

\para{Saddle-point formulation}
Problem (\ref{eq:pbmic}) is self consistent and drives the evolution process, as will be shown in Section~\ref{eq:pbmic}. 
Yet, the saddle-point formulation of the projection problem sheds light on the underlying Darcy-like structure of the problem, and introduces a pressure field which will have a straightforward interpretation in terms of modeling, and which will have a continuous counterpart in the macroscopic setting. 

%Yet, it is natural to express the projection problem in a saddle point form 
%The projection of $\UU$ onto $C_\qq$, which gives the instantaneous velocity,   can be expressed in a dual form. 
At some given configuration $\qq$, let us denote by $\Lambda$ the set of couples $(i,j)$, $i<j$, such that $i$ and $j$ are in contact. 
Constraints on the velocity can be written in a  matrix form (we drop the dependence of matrix $B$ upon the configuration)
$$
B \vv \leq 0,% \hbox{with } B_\vv = \left ( \GG_{ij}\cdot \vv \right ) _{(i,j) \in \Lambda } 
$$
where the rows of $B$ correspond to active constraints :
$$
\GG_{ij}\cdot \vv \geq 0 \quad (i,j) \in \Lambda.
$$
The saddle-point formulation of the projection problem consists in finding $(\uu,p)\in \setR^{2N} \times \setR_+^{N_\Lambda}$ (where $N_\Lambda$ is the number of active constraints), such that
\BE
\label{eq:spmic}
\left \{
\begin{array}{lcl}
\uu + B^\star p &=& \UU \vseq \\
B \uu &\leq& 0.
\end{array}
\right . 
\EE
supplemented by the complementarity condition 
$$
p\cdot B\uu = 0.
$$
Pressure $p_{ij}$ can be seen as the interaction force between individuals $i$ and $j$.

\subsection{Macroscopic setting}

In the macroscopic setting the crowd is represented by a density $\rho$, which we shall assume to be supported within some domain (the room) $\overline \Omega$.  In order to alleviate the notations, we shall present the model in the case of a closed room (see the end of Section~\ref{sec:theomac} for some details on the way an exit door can be accounted for in this framework).
If we set at $1$ the saturation value, the set of feasible densities is 
\begin{equation}\label{definition K}
K = \set{ \rho \in L^1(\setR^2)\virg \int \rho(x)\, dx = 1\virg \supp (\rho) \subset \overline{\Omega}\virg 0\leq \rho (x) \leq 1\hbox{ for a.e.  } x}  \, .
\end{equation}
The density is advected by the actual velocity $\uu$ (macroscopic counterpart of the microscopic one $d\qq / dt $)
$$
%\label{eq:pbmactransp}
\partial_t \rho  + \nabla \cdot (\rho \uu) = 0,
$$
and $\uu$ is defined as
$$
%\label{eq:pbmacproj}
\uu = P_{C_\rho}\UU,
$$
where $C_\rho$ is the set of feasible velocities. It can be defined unformally as the set of all those velocities which have  a non-negative divergence on the saturated zone $[\rho=1]$ (where the density  may not increase). More precisely, it is defined by duality as 
\begin{equation}
\label{eq:defCrho}
C_\rho = \left \{ \mathbf{v} \in L^2(\Omega)^2, \; \int_\Omega \mathbf{v} \cdot \nabla q \leq 0 \quad \forall q \in   H^1_+(\Omega)\virg q(x) = 0 \hbox { a.e.  on } [\rho<1] 
\right \},
\end{equation}
%where $H^1_\rho(\Omega)$ is a space of pressions which act only on the saturated zone $[\rho=1]$:
$$
\hbox{ with }H^1_+(\Omega) = \{ q\in H^1(\Omega)\, ,\,\, q \geq 0 \hbox { a.e. in  }\Omega%, \;
%q_{|\Gamma_{out}} = 0
 \}. $$
 %%%

\vspace*{0.2cm}
\para{Saddle-point formulation}
The dual expression of $C_\rho$ induces a natural saddle-point formulation for the projection problem, based on a pressure field $p$: Find $(\uu,p)\in L^2(\OO) ^2 \times H^1_\rho(\OO)$, where $H^1_\rho(\OO) = \{ q \in H^1_+(\OO), \, q = 0 \textmd{ a.e. on } [\rho<1]\}$, such that
\BE
\label{eq:sppmac}
\left \{
\begin{array}{lcl}
\uu + \nabla p &=& \UU \vseq \\
\displaystyle - \int_\OO \uu \cdot \nabla q & \leq & 0 \quad \hbox{ for all } q \in H^1_\rho(\OO),
\end{array}
\right . 
\EE
with the complementarity condition: 
$$- \int_\OO \uu \cdot \nabla p = 0 \, .$$

%Find $(\uu,p)\in L^2(\OO) ^2 \times H^1_+(\OO)$ such that
%\BE
%\label{eq:sppmac}
%\left \{
%\begin{array}{lcl}
%\uu + \nabla p &=& \UU \vseq \\
%-\nabla \cdot  \uu &\leq& 0\quad \hbox{on }[\rho=1].
%\end{array}
%\right . 
%\EE
%with the complementarity condition $p(1-\rho) = 0$.

% We put off until Section~\ref{sec:wp} a rigorous description of this convex cone, based on a dual expression of the constraint, that will alow to account for walls. 

\section{Rigorous formalisms, well-posedness issues}
\label{sec:wp}

\subsection{Microscopic model~: generalized sweeping processes}
We address here the question of existence and uniqueness of solutions to 
$$
\frac {d\qq}{dt } = P_{C_\qq} \UU(\qq).
$$
where $t \mapsto \qq(t)\in  \setR^{2N}$ describes the motion of the crowd, $C_\qq$ is the cone of feasible velocities defined  by~(\ref{eq:Cqdef}), and $\UU(\qq)$ is the desired velocity. 
%Note that, as 
%
Before stating the main results of this section, let us describe the general framework into which it fits, namely the 
 sweeping processes, introduced by Moreau in the late 70's (see~\cite{Mor}). The original setting was the following: consider $V$ a Hilbert space and 
$t\longmapsto K(t) \subset V$ a path  of convex closed sets in $V$, with some regularity in time (e.g.\ $K(t)$ is continuous for the Hausdorff distance). He was interested in the evolution of a point $q(t)$ subjected to remain in $K(t)$. Assuming that the evolution process tends to minimize the norm of $\dot q$, one ends up with the following process
$$
\frac {d q}{dt } \in - N_{K(t)} (q(t)),
$$
where $N_{K}(q)$ is the outward normal cone to $K$ at $q$:
\BE
\label{eq:onc}
N_{K}(q) = \set{  v \in V \virg \exists \alpha >0\virg q \in P_K(q + \alpha v).
}
\EE
This normal cone has been introduced in \cite{Clarke} and is called more precisely the proximal normal cone. 
Note that,  as $K$ is convex, this normal cone identifies to $\partial I _{K}$, the subdifferential of  the  indicatrix function of $K(t)$. 
The following discrete process (so called {\em catching up} algorithm), which is used by Moreau  to establish well-posedness of the problem, gives a clear idea of the evolution mechanism. Consider a time step $\tau >0$, an initial condition $q^0\in K(0)$, successive positions are built according to 
$$
q^{n} = P_{K(t^n)}(q^{n-1}).
$$ 
As $K(t^n)$ is closed and convex, the projection is well-defined and contractant, which allows to establish convergence results for the sequence of  piecewise constant solutions $q_\tau$. 
It is clear that the recursive process extends straightforwardly to the case where the projection onto $K$ is properly defined (i.e. single-valued) into its neighborhood.  The finite-dimensional sets satisfying this property were introduced by Federer in \cite{Federer} under the name of {\em positively reached sets}. Then, they were called {\em p-convex sets}  by Canino in \cite{Canino} and later {\em proximally smooth sets} by Clarke, Stern and Wolenski in \cite{Clarke}. The final name ``uniformly prox-regular set'' will be given by Rockafellar et al. in \cite{Poliquin1,Poliquin2} (see Definition~\ref{def:pr} below).
% In \cite{Poliquin2}, this notion is extended to the infinite dimensionnal situation (Hilbert spaces). 

%\begin{definition}
%Let $S$ be a closed subset of $V$. $S$ is said $\eta$-prox-regular (or uniformly prox-regular with a constant $\eta$) if the projection operator $P_S$ is well-defined and strongly weakly continuous on $\{ \xx \in V,  0<d_S(\xx)<\eta  \}$.
%\end{definition}
%
%ou (mais j'ai alors besoin d'introduire le cone proximal normal avant)
%
%\begin{definition}
%Let $S$ be a closed subset of $V$. $S$ is said $\eta$-prox-regular if for
%all $\xx \in \partial S $ and $\vv \in N(S,\xx), \  |\vv|=1 $ we have:
%$$ B(\xx+\eta \vv, \eta)\cap S = \emptyset, $$
%where $ B(\xx+\eta \vv, \eta)$ denotes the open ball centered at
%$\xx+\eta\vv$ with radius $\eta>0$.
%\end{definition}
%
%
%voici la d\'efinition du c\^one proximal normal 
%\begin{definition}
%Let $S$ be a closed subset of $V $.\\ The proximal
%normal cone to $S$ at $\xx$ is defined by:  $$ N(S,\xx):=
%\left \{
%\vv \in V, \  \exists \alpha >0, \  \xx \in P_S(\xx + \alpha
%\vv) \right \}, $$
%where  $$ P_S(\yy):=\{\zz \in S, \ d_S(\yy)= |\yy -\zz |\}, \textmd{ with } d_S(\yy):=\inf_{\zz \in S} |\yy- \zz|  $$
%corresponds to the Euclidean projection onto $S$.
%\label{def:N}
%\end{definition}
%
%

The crowd motion model differs slightly from the sweeping process, as the feasible set is fixed whereas the point $\qq$ (which represents the whole crowd) tends to evolve according  to some given velocity $\UU$. The same principles can be extended straightforwardly, in particular: assuming $\UU$ is not too large, and $K$ is uniformly prox-regular, then for a sufficiently small time step $\tau$, the catching up algorithm can be used to build discrete solutions. In the same manner, 
%still under the assumption of prox-regularity for $K$, 
one may 
%define an outward normal cone $N_K$ and 
write the evolution process as 
\BE
\label{eq:dqNmic}
\frac {d \qq}{dt }  - N_{K} (\qq(t)) \ni \UU(\qq(t)).
\EE
%where $$ N_{K} (\qq) = \left\{- \sum _{(i,j) \in \Lambda(\qq)}  \lambda_{ij} \GG_{ij}(\qq) \right\}.$$
%We recall that $ \Lambda(\qq) = \{(i,j),  \  \ D_{ij}(\qq)=0 \} $.
Note that this new formulation is a direct consequence of~(\ref{eq:pbmic}): as $N_{K}(\qq)$ and $C_\qq$ are mutually polar, the identity operator  can be written as $I = P_{N_{K}(\qq)} + P_{C_\qq}$ (see~\cite{moreauPol}).
Equivalence between both formulations is not obvious, as an identity has been replaced by an inclusion, yet we shall see that it holds true under some conditions.

%The crowd motion model differs slightly from the sweeping process, as the feasible set is fixed whereas the point $\qq$ (which represents the whole crowd) tends to evolve to some given velocity $\UU$. The same principles can be extended straightforwardly, in particular : assuming $\UU$ is not too large, and $K$ is uniformly prox-regular, then for a sufficiently small time step $\tau$, the catching up algorithm can be used to build discrete solutions. In the same manner, still under the assumption of prox-regularity for $K$, one may define an outward normal cone $N_K$ and write the evolution process as 
%\BE
%\label{eq:dqNmic}
%\frac {d \qq}{dt }  - N_{K} (\qq(t)) \ni \UU(\qq(t)),
%\EE
%where $$ N_{K} (\qq) = \left\{- \sum _{(i,j) \in \Lambda(\qq)}  \lambda_{ij} \GG_{ij}(\qq) \right\}.$$
%We recall that $ \Lambda(\qq) = \{(i,j),  \  \ D_{ij}(\qq)=0 \} $.
%Note that this new formulation is a direct consequence of~(\ref{eq:pbmic}): as $N_{K}(\qq)$ and $C_\qq$ are mutually polar, the identity operator  can be written as $I = P_{N_{K}(\qq)} + P_{C_\qq}$ (see~\cite{moreauPol}).
%Equivalence between both formulations is not obvious, as an identity has been replaced by an inclusion, yet we shall see that it holds true under some conditions. 

The catching up algorithm reads as follows 
$$
\hbox{Step 1 (prediction):  }  \tilde \qq ^{n+1} = \qq^n + \tau \UU(\qq^n);
$$
$$
\hbox{Step 2 (correction):  } \qq ^{n+1} = P_K \left ( \tilde \qq ^{n+1}\right ) .
$$

Let us now express in a rigorous manner that this is indeed an algorithm for $\tau$ sufficiently small, and that this strategy allows to build a solution to our problem. 

%The approach we presented unformally can be carried out in a rigourous way.
 Let us first  give a proper definition of prox-regularity (see Theorem 1.3 in \cite{Poliquin2}). 
\begin{definition}
\label{def:pr}
A closed set $K$ is said to be $\eta$-prox-regular  at $\tilde{q}$ if  there exists a neighborhood $O$ of $\tilde{q}$ such that
for all $q \in \partial K \cap O $ and $v \in N_K(q)$, with $\abs {v} =1 $,  we have
$$B(q+\eta v, \eta)\cap K = \emptyset,$$
where $ N_K(q)$ is the proximal normal cone to $K$ at $q$ (defined by~(\ref{eq:onc})).
It is said to be uniformly prox-regular with a constant $ \eta$ if it is $\eta$ prox-regular at every point of its boundary. 
\end{definition}
As for  our microscopic model, it holds
\begin{proposition}
K is uniformly prox-regular with 
 $$ \eta \leq r\sqrt{ \dfrac{12}{N (N-1)(N+1)}}.$$
% $
\end{proposition}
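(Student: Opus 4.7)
My plan is to verify the equivalent formulation of prox-regularity: for every $\qq\in\partial K$, every unit outward normal $v\in N_K(\qq)$, and every $\qq'\in K$,
\BE
\langle v,\qq'-\qq\rangle \leq \frac{|\qq'-\qq|^2}{2\eta}.
\EE
The argument would split into three pieces: describing the structure of $v$, quantifying the second-order defect of each active constraint, and controlling the multipliers through a coercivity estimate.

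\textbf{Step 1 (Normal cone).} Each $D_{ij}$ is convex and $C^1$ away from the diagonal $\qq_i=\qq_j$, and near $\qq$ the set $K$ reduces locally to the intersection of the regions $\{D_{ij}\geq 0\}$ over the active index set $\Lambda=\{(i,j):i<j,\,D_{ij}(\qq)=0\}$. A standard constraint-qualification argument (the gradients $\GG_{ij}(\qq)$ being non-zero and easily separated by, e.g., a uniform radial dilation) gives
$N_K(\qq)\subset\{-\sum_{(i,j)\in\Lambda}\mu_{ij}\GG_{ij}(\qq):\mu_{ij}\geq 0\}$, so every unit normal has the form $v=-\sum\mu_{ij}\GG_{ij}(\qq)$ with $\mu_{ij}\geq 0$.

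\textbf{Step 2 (Quadratic defect).} Fix $(i,j)\in\Lambda$ and $\qq'\in K$. Set $u_{ij}=\qq_j-\qq_i$ (so $|u_{ij}|=2r$) and $w_{ij}=(\qq'_j-\qq_j)-(\qq'_i-\qq_i)$. Expanding $|u_{ij}+w_{ij}|^2\geq 4r^2$ gives $\langle u_{ij},w_{ij}\rangle\geq -|w_{ij}|^2/2$, hence
\BE
\langle \GG_{ij}(\qq),\qq'-\qq\rangle = \langle e_{ij},w_{ij}\rangle \geq -\frac{|w_{ij}|^2}{4r}.
\EE
Using $|w_{ij}|^2\leq 2(|\qq'_i-\qq_i|^2+|\qq'_j-\qq_j|^2)$ and $\sum_{(i,j)\ni k}\mu_{ij}\leq\sum\mu_{ij}$ for every index $k$, one deduces
\BE
\langle v,\qq'-\qq\rangle = -\sum\mu_{ij}\langle\GG_{ij},\qq'-\qq\rangle \leq \frac{|\qq'-\qq|^2}{2r}\sum_{(i,j)\in\Lambda}\mu_{ij}.
\EE

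\textbf{Step 3 (Coercivity).} This is the key technical step. I would prove
\BE
\Bigl\|\sum_{(i,j)\in\Lambda}\mu_{ij}\GG_{ij}(\qq)\Bigr\|\geq\sqrt{\tfrac{12}{N(N-1)(N+1)}}\sum\mu_{ij}
\EE
by testing against the configuration itself. Let $V_1,\dots,V_p$ be the connected components of the graph $(\{1,\dots,N\},\Lambda)$, with barycenters $\bar\qq_\alpha$, and define $W=(w_1,\dots,w_N)\in\setR^{2N}$ by $w_i=\qq_i-\bar\qq_\alpha$ for $i\in V_\alpha$. The magical identity is that for every active pair, $\qq_j-\qq_i=2r\,e_{ij}$, so $\langle\GG_{ij},W\rangle=2r$ uniformly; Cauchy--Schwarz then yields $\|\sum\mu_{ij}\GG_{ij}\|\cdot\|W\|\geq 2r\sum\mu_{ij}$. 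To bound $\|W\|$, within each component $V_\alpha$ of size $n_\alpha$ any two vertices are joined by a chain of active edges of length $2r$, so $|\qq_i-\qq_j|\leq 2r\,d_\alpha(i,j)$ for the graph distance $d_\alpha$. A combinatorial lemma --- the path maximizes $\sum d^2$ among connected graphs on $n$ vertices, giving $\sum_{i,j}d^2\leq n^2(n^2-1)/6$ --- combined with $\sum_{i\in V_\alpha}|\qq_i-\bar\qq_\alpha|^2=\tfrac{1}{2n_\alpha}\sum_{i,j\in V_\alpha}|\qq_i-\qq_j|^2$ gives $\sum_{i\in V_\alpha}|\qq_i-\bar\qq_\alpha|^2\leq r^2 n_\alpha(n_\alpha^2-1)/3$; summing over $\alpha$ and using $\sum n_\alpha^3\leq N^3$ (from $\sum n_\alpha=N$) produces $\|W\|^2\leq r^2 N(N-1)(N+1)/3$, whence the coercivity.

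\textbf{Conclusion and main obstacle.} Combining Steps 2 and 3 with $\|v\|=1$ yields $\sum\mu_{ij}\leq\sqrt{N(N-1)(N+1)/12}$, so plugging into Step 2 gives the prox-regularity inequality with $\eta=r\sqrt{12/(N(N-1)(N+1))}$. The delicate part is Step 3: identifying the right test vector (the centred configuration itself, component by component, so that every contact contributes the universal value $2r$) and then establishing the combinatorial bound that a path maximizes the sum of squared graph distances. The sharp constant is attained exactly by $N$ disks aligned in a chain of consecutive contacts, which identifies the worst configuration and confirms sharpness.
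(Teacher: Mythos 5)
Your proposal is correct in substance, but it takes a genuinely different route from the paper's. The paper proceeds by (i) computing the constant $r\sqrt2$ for each elementary set $K_{ij}$ via curvature, (ii) invoking the reverse triangle inequality~(\ref{eq:revtriang}) with an unspecified constant $\gamma$ and deducing $\eta=r\sqrt2/\gamma$ by citation to \cite{MVm2an,Numschemejv}, and (iii) obtaining the displayed numerical value as an \emph{upper} bound on the achievable constant, by a local computation at the aligned configuration quoted from Proposition 3.18 of \cite{thesejv}. You instead verify the prox-regularity inequality directly and self-containedly, and your Step 3 is precisely a sharp quantitative form of~(\ref{eq:revtriang}): since $|\GG_{ij}|=\sqrt2$, your coercivity constant $\sqrt{12/(N(N-1)(N+1))}$ corresponds to $\gamma=\sqrt{N(N-1)(N+1)/6}$, and $r\sqrt2/\gamma$ reproduces exactly the stated value. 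The real content is your test vector (the configuration centred component by component, which gives $\GG_{ij}\cdot W=2r$ on every contact and, pleasantly, also furnishes the constraint qualification needed in Step 1) together with the combinatorial bound on the sum of squared graph distances. That lemma is true and can be proved by induction on $n$ by removing a non-cut vertex $v$: at most $n-k$ vertices lie at distance $\ge k$ from $v$, and distances among the remaining vertices only increase, so the number of unordered pairs at distance $\ge k$ is at most $\binom{n-k}{2}+(n-k)=\binom{n-k+1}{2}$, whence $\sum_{i<j}d^2\le\sum_{k\ge1}(2k-1)\binom{n-k+1}{2}=n^2(n^2-1)/12$, i.e.\ $n^2(n^2-1)/6$ over ordered pairs, which is the path value you use. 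What your approach buys is an explicit admissible constant (matching $r\sqrt2$ at $N=2$ and tight, through every inequality in your chain, on the aligned chain with equal multipliers), where the paper only asserts the existence of some $\eta$ and delegates the quantitative work to external references.

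One caveat on what is being claimed: as the paper's own proof makes clear, the inequality $\eta\le r\sqrt{12/(N(N-1)(N+1))}$ in the statement is meant as an upper bound on the best possible constant, obtained from the aligned configuration; you prove the complementary (and arguably more useful) fact that this value is itself an admissible uniform constant. Your closing sentence about sharpness only records that your estimates are saturated on the chain; to get the paper's ``$\le$'' direction literally, you would still need to exhibit admissible perturbations of the aligned configuration (e.g.\ bending the chain) that asymptotically saturate the quadratic inequality, as is done in \cite{thesejv}.
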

Proof. 
First it can be shown that $K_{ij}=\{ \qq \in \setR^{2N}, D_{ij} \geq 0 \} $ is uniformly prox-regular. In this case, tools of differential geometry can be used to compute the constant of prox-regularity which corresponds to the smallest
radius of curvature (i.e. the largest eigenvalue of
Weingarten operator). After calculation, we show that $K_{ij} $ is uniformly
prox-regular with constant $r\sqrt{2} $. 

%Thus the complement of a smooth convex set is uniformly prox-regular. 
One may wonder whether the intersection of such sets (which is the case for $K$) is uniformly prox-regular with a constant depending only on the constants of prox-regularity 
of the smooth sets. From a general point of view, this is wrong as illustrated in Figure~\ref{fig:idfausse}. 
Indeed, we have plotted in solid line the boundary of a set $S$ which is the intersection of two identical disks' complements. 
This set is uniformly prox-regular but its constant of prox-regularity (equal to the radius of the disk plotted in dashed line) tends to zero
 when the disks' centers move away from each other. In this situation, the scalar product between the normal vectors $\nn_1$ and $\nn_2$ 
(see Fig.~\ref{fig:psvectnorm}) tends to -1, which suggests that the prox-regularity constant also depends on the angle between normal vectors.

%. Thus, the constant of prox-regularity of $S$ is also dependent on the angle between vectors $\nn_1$ and $\nn_2$.
\begin{figure}[h]
\begin{center}
\includegraphics[width=0.3\textwidth]{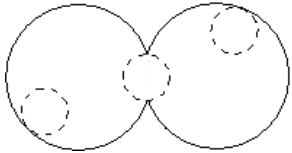}
\caption{Vanishing of the constant of prox-regularity.}
\label{fig:idfausse}
\end{center}
\end{figure}
\begin{figure}[h]
\centering
\psfrag{n1}[l]{$\nn_1$}
\psfrag{n2}[l]{$\nn_2$}
\includegraphics[width=0.2\textwidth]{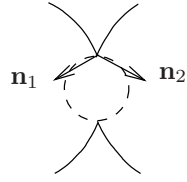}
\caption{Evolution of the angle between the vectors $\nn_1$ and $\nn_2$.}
\label{fig:psvectnorm}
\end{figure}

Consequently the proof of the uniform prox-regularity of $K$ rests on a good estimate of the angles between gradients of active constraints.  The existence of such a constant $\eta $ relies on the positive linearly independence of gradients of active constraints. 
More precisely it can be shown that there exists $\gamma >1$ such that for all $\qq \in K $, 
\BE
\label{eq:revtriang}
 \sum_{(i,j) \in \Lambda(\qq)}  \lambda_{ij} |\GG_{ij}(\qq) | \leq \gamma  \left| \sum_{(i,j) \in \Lambda(\qq)}  \lambda_{ij} \GG_{ij}(\qq)\right|. 
\EE
%Note that this reverse triangle inequality is a kind of uniform Mangasarian-Fromovitz constraint qualifications condition. 
%A detailed proof of this result can be found in ~\cite{MVm2an, Numschemejv}.
It can be  deduced from this inequality that $K$ is $\eta $-prox-regular with $\eta = r\sqrt{2}/ \gamma$ (a detailed proof of this result can be found in ~\cite{MVm2an, Numschemejv}).
Furthermore, the given upper bound is obtained by considering a configuration $\qq$ of aligned disks and by computing the constant of local prox-regularity at $\qq$ (see Proposition 3.18 in ~\cite{thesejv} for more details).
\endproof

%{\bf The prox-regularity coefficient $\eta $ of $K$ satisfies $$ \eta \leq r\sqrt{ \dfrac{12}{N (N-1)(N+1)}}$$ see Proposition 3.18 in \cite{thesejv}.}
Note that the prox-regularity coefficient of $K$ degenerates
(i.e. goes to $0$) as the size of the disks goes to zero, for a
constant total mass. Furthermore the prox-regularity coefficient of
$K$ depends on $N$ with $N^{3/2}$ for $N$ large enough (see \cite{Saisho}).

Now the following result can be established:
\begin{theorem}
Let $\UU$ be Lipschitz and bounded, for all $T>0$ and all $\qq_0 \in \ K $,
there exists a unique absolutely continuous solution $t \longmapsto \qq(t)$ to $$\left \{ 
\begin{array}{l}
\displaystyle\frac {d \qq}{dt} + N_K (\qq) \ni \UU(\qq) \  a.e. \ in \ [0,T] \vspace{6pt}
\\
\qq(0)=\qq_0. 
\end{array}
\right.$$
Moreover, this solution satisfies the differential equation
 $$\left \{ 
\begin{array}{l}
\displaystyle\frac {d \qq}{dt} = P_{C_\qq}(\UU(\qq) )\  a.e. \ in \ [0,T] \vspace{6pt}
\\
\qq(0)=\qq_0. 
\end{array}
\right.$$
\label{th:wpmicro}
\end{theorem}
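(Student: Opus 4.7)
The plan is to rigorously implement the catching-up algorithm already introduced before the statement, pass to the limit, and then reconcile the differential inclusion with the projected ODE.

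\emph{Step 1 (discrete scheme and a priori bounds).} Fix a time step $\tau>0$, set $\qq^0=\qq_0$, and define inductively $\tilde\qq^{n+1}=\qq^n+\tau\,\UU(\qq^n)$ and $\qq^{n+1}=P_K(\tilde\qq^{n+1})$. Since $K$ is $\eta$-prox-regular by the previous proposition and $\UU$ is bounded, for $\tau$ small enough (say $\tau\|\UU\|_\infty<\eta/2$) the point $\tilde\qq^{n+1}$ lies in the tube where $P_K$ is single-valued and $1$-Lipschitz, so the iteration is well-defined and $\qq^{n+1}\in K$. From $|\qq^{n+1}-\qq^n|\le|\tilde\qq^{n+1}-\qq^n|=\tau|\UU(\qq^n)|$ one deduces uniform Lipschitz bounds on the piecewise affine interpolants $\qq_\tau$. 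Moreover, by the definition of the projection, $(\tilde\qq^{n+1}-\qq^{n+1})/\tau\in N_K(\qq^{n+1})$, which is exactly a discrete version of the inclusion $\UU(\qq)-\dot\qq\in N_K(\qq)$.

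\emph{Step 2 (passage to the limit).} By the uniform Lipschitz estimate and Ascoli--Arzel\`a one extracts a subsequence $\qq_\tau$ converging uniformly on $[0,T]$ to some absolutely continuous $\qq$ with $\qq(0)=\qq_0$, and $\dot\qq_\tau\rightharpoonup\dot\qq$ weakly in $L^2$. The delicate point is that $N_K(q)$ is not convex-valued in general, so weak limits of elements of $N_K(\qq^n)$ need not lie in $N_K(\qq)$. This is where uniform prox-regularity is used: the proximal normal cone to an $\eta$-prox-regular set enjoys a hypomonotonicity property, namely for $\xi_i\in N_K(q_i)$ of norm $\le 1$,
\[
\langle \xi_1-\xi_2,q_1-q_2\rangle \ge -\tfrac{1}{\eta}|q_1-q_2|^2,
\]
and the graph of $N_K$ is closed in the appropriate topology. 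Combining this with the Lipschitz continuity of $\UU$ allows one to pass to the limit in the discrete inclusion and obtain $\UU(\qq(t))-\dot\qq(t)\in N_K(\qq(t))$ for a.e.\ $t$. This hypomonotonicity argument is the main obstacle of the proof; everything else is routine once it is in hand.

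\emph{Step 3 (uniqueness).} Given two solutions $\qq_1,\qq_2$, one writes
\[
\tfrac{1}{2}\tfrac{d}{dt}|\qq_1-\qq_2|^2 = \langle\UU(\qq_1)-\UU(\qq_2),\qq_1-\qq_2\rangle - \langle\xi_1-\xi_2,\qq_1-\qq_2\rangle,
\]
with $\xi_i=\UU(\qq_i)-\dot\qq_i\in N_K(\qq_i)$. Since $\xi_i$ is bounded (because $\UU$ is bounded and the projection is non-expansive in the prox-regular tube), the hypomonotonicity estimate above gives $\langle\xi_1-\xi_2,\qq_1-\qq_2\rangle\ge -C|\qq_1-\qq_2|^2$, and the Lipschitz character of $\UU$ handles the other term. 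Gronwall's lemma then yields $\qq_1\equiv\qq_2$.

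\emph{Step 4 (equivalence with the projected ODE).} It remains to show that the inclusion $\dot\qq\in\UU(\qq)-N_K(\qq)$ is equivalent to $\dot\qq=P_{C_\qq}\UU(\qq)$. By construction $\dot\qq(t)\in T_K(\qq(t))=C_{\qq(t)}$ (this can be seen by differentiating the constraints $D_{ij}(\qq(t))\ge 0$ at contact times, or directly from the discrete scheme). Since $N_K(\qq)$ and $C_\qq$ are mutually polar cones at any $\qq\in K$ by the result of Moreau cited in the excerpt, the decomposition $\UU(\qq)=P_{C_\qq}\UU(\qq)+P_{N_K(\qq)}\UU(\qq)$ is unique and orthogonal, so $\dot\qq=\UU(\qq)-\xi$ with $\xi\in N_K(\qq)$ and $\dot\qq\in C_\qq$ forces $\dot\qq=P_{C_\qq}\UU(\qq)$, which is the second statement of the theorem.
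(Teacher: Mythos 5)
Your route is genuinely different from the paper's: the paper does not re-prove the sweeping-process theory, it invokes the results of Edmond--Thibault for existence and uniqueness of the perturbed sweeping process with a uniformly prox-regular set, and then cites Proposition 3.3 of Bernicot--Venel to get $\frac{d\qq}{dt}+P_{N_K(\qq)}(\UU(\qq))=\UU(\qq)$, concluding by polarity of $N_K(\qq)$ and $C_\qq$. You instead reconstruct the underlying argument (catching-up scheme, hypomonotonicity of the proximal normal cone, closedness of its graph, Gronwall). Steps 1--3 of your sketch are essentially the standard proof and are sound, up to minor quantitative slips: the projection onto an $\eta$-prox-regular set is Lipschitz in the tube of radius $<\eta$ but not $1$-Lipschitz, and $|\qq^{n+1}-\qq^n|\leq|\tilde\qq^{n+1}-\qq^n|$ is not justified for nonconvex $K$; one only gets $|\qq^{n+1}-\qq^n|\leq 2\tau\|\UU\|_\infty$ because $|\qq^{n+1}-\tilde\qq^{n+1}|=d_K(\tilde\qq^{n+1})\leq|\tilde\qq^{n+1}-\qq^n|$. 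These do not affect the compactness argument.

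Step 4, however, has a genuine gap as written. From $\dot\qq=\UU(\qq)-\xi$ with $\xi\in N_K(\qq)$ and $\dot\qq\in C_\qq$ you conclude $\dot\qq=P_{C_\qq}\UU(\qq)$ by appealing to the uniqueness of the Moreau decomposition; but the Moreau decomposition is the unique splitting into \emph{orthogonal} components, and a splitting of $\UU(\qq)$ into an element of $C_\qq$ plus an element of $N_K(\qq)$ is in general not unique without the complementarity $\langle\xi,\dot\qq\rangle=0$ (take $C$ a half-plane in $\setR^2$, $N$ its polar half-line, $\UU=0$: then $0=v+\xi$ with $v\in C\setminus\{0\}$, $\xi\in N$, yet $v\neq P_C 0$). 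So membership alone does not ``force'' the identity. The missing ingredient can be supplied as follows: for a.e.\ $t$ and every constraint active at time $t$, the absolutely continuous function $s\mapsto D_{ij}(\qq(s))$ is nonnegative and vanishes at $t$, hence has zero derivative there, i.e.\ $\GG_{ij}(\qq(t))\cdot\dot\qq(t)=0$; since $N_K(\qq)$ is generated by the vectors $-\GG_{ij}$ of the active constraints (this uses the constraint qualification, i.e.\ the reverse triangle inequality~(\ref{eq:revtriang})), this gives $\langle\xi,\dot\qq\rangle=0$, and only then does polarity identify $\dot\qq$ with $P_{C_\qq}\UU(\qq)$. This complementarity step is exactly what the paper outsources to Proposition 3.3 of \cite{Proxbanach}; with it added, your argument closes.
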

%Under these assumptions, the solution $\qq$ of this differential inclusion satisfies the differential equation (\ref{eq:pbmic}) (see). 
The well-posedness of the differential inclusion can be proved
thanks to results in \cite{Thi1,Thi2}. Then Proposition
3.3 in \cite{Proxbanach} claims that the solution satisfies the
following differential equation $$\frac {d \qq}{dt} +
P_{N_K(\qq)}(\UU(\qq) )=\UU(\qq) $$ which is equivalent to
(\ref{eq:pbmic}) since the cones $ N_K(\qq)$ and  $C_\qq$ are mutually
polar.

\subsection{Macroscopic model}
\label{sec:theomac}
We are concerned here with the question of existence of solution to the macroscopic model: given a desired  velocity field $x\mapsto \UU(x)$ defined in $\OO$, given an initial density $\rho^0$, find $\rho(x,t)$ such that 
\BE
\label{eq:pbmactransp}
\partial_t \rho  + \nabla \cdot (\rho \uu) = 0,
\EE
where  the actual velocity field $\uu$ verifies
\BE
\label{eq:pbmacproj}
\uu = P_{C_\rho}\UU,
\EE
and $C_\rho$ is the set of feasible velocities
defined by \eqref{eq:defCrho}.
Eq.~\ref{eq:pbmactransp} is meant in a weak sense:
$$
\int_0^T \int_\OO \rho \partial_t  \varphi  + \int_0^T \int_\OO \rho \uu \cdot \nabla \varphi 
+ \int _\OO \varphi(0,\cdot) \rho^0 = 0
\quad \forall \varphi \in C_c^\infty([0,T)\times \setR^2).
$$
%\begin{equation}
%\label{eq:defCrho}
%C_\rho = \left \{ \mathbf{v} \in L^2(\Omega)^2, \; \int_\Omega \mathbf{v} \cdot \nabla q \leq 0 \quad \forall q \in   H^1_+(\Omega)\virg q(x) = 0 \hbox { a.e.  on } [\rho<1] 
%\right \},
%\end{equation}
%%where $H^1_\rho(\Omega)$ is a space of pressions which act only on the saturated zone $[\rho=1]$:
%$$
%\hbox{ with }H^1_+(\Omega) = \{ q\in H^1(\Omega)\, ,\,\, q \geq 0 \hbox { a.e. in  }\Omega%, \;
%%q_{|\Gamma_{out}} = 0
% \}. $$

 The overall problem can be written as a non-local and nonlinear transport equation
$$
\partial_t \rho  + \nabla \cdot (\rho \uu (\rho))= 0,
$$
where the notation $\uu(\rho)$ expresses a dependence of $\uu$ upon the whole density field $\rho$, through the projection of $\UU$ onto $C_\rho$. The nonsmooth character of this projection and the fact that the regularity of $\uu$ is not controlled (the projection is performed in the $L^2$ sense), rule out the possibility to use standard tools to define solutions to this equation. 
One may wonder whether the catching-up approach, which proved successfull  in the microscopic setting, can be followed in the present context. The prediction step is straightforward: one transports $\rho$ according to the desired velocity field $\UU$ during a time step $\tau >0$. 
 As for the correction (or projection) step, it appears immediately that the standard eulerian way to measure distances between densities (by estimating a given norm of  their difference) is not suitable. In the microscopic setting, the catching up approach proved successful because the difference between two configurations identifies with a displacement field, and this is due to the very Lagrangian description of individuals. Recovering this feature calls for a new way to measure differences between densities, more respectful of the Lagrangian  description of the motion, and this way is the Wasserstein setting. 
 
 \bigskip
 
 \para{Optimal transportation and Wasserstein distance}
% We suppose here that $\OO$ is convex (see XX for some remarks on the possible way to alleviate this assumption). 
%The quadratic Wasserstein distance 
We assume here that $\OO$ is convex.
 Let $\bft \;: \; \OO \longrightarrow \OO$ be a measurable mapping, and $\mu\in \PP(\OO)$  a probability measure. We say that 
 $\bft $ pushes forward $\mu$ onto  $\nu$ (written $\bft_\# \mu = \nu$) if
 $$
 \mu\left ( \bft^{-1} (A)\right )  = \nu (A)
 $$
 for any measurable set $A$.
 For given measures $\mu_0$ and $\mu_1$, we denote by $\Pi(\mu_0,\mu_1)$ the set of transport maps between $\mu_0$ and $\mu_1
 $. The quadratic Wasserstein distance  $W_2(\mu_0,\mu_1)$ is defined by
 $$
 W_2(\mu_0,\mu_1)^2 =  \inf_{\bft\in \Pi(\mu_0,\mu_1) }\int_\OO \abs {\bft(x) - x} ^2 d \mu_0(x) 
 .
 $$
 
 Notice that this is a sloppy definition that holds for atomless measures, which is anyway the case we are interested in; for general measures one should pass through the so-called {\it transport plans}, which we do not want to introduce here, and we address the interested reader to \cite{Amb,Vil1}. However, this quantity $W_2$ can be proven to be a distance on the space $\PP(\OO)$ and it makes the space of probability measures a {\it geodesic space}, i.e. every pair of points is linked by a curve (which is in this case a curve of measures), such that the length of this curve exactly equals the distance between the points. In case in the minimization defining $W_2$ there is an optimal transport map $\bft$ (which is the case if $\mu_0$ is absolutely continuous with respect to the Lebesgue measure $\lcal^d$), then this geodesic curve is known explicitly and it is given by $\mu_t:=((1-t)\mathbf{id}+t\bft)_\#\mu_0$.
 
% This framework allows to extend the notion
 In the spirit of the structure that we are imposing on $\PP$, we define
the  outward normal cone to a subset $K\subset \PP$ (we actually  apply the definition of the strong Fr\'echet subdifferential in~\cite{Amb} to  the indicatrix function of $K$) by
$$
\vv \in N_K(\rho) \Longleftrightarrow  \int_\OO \vv(x) \cdot (\bft(x)-x)d\rho(x) \leq o\left (\| \bft-\mathbf{id}\|_{L^2(\rho)} \right ) 
$$
for all $\bft$ such that $\bft_\# \rho \in K$. It makes it possible to express the problem like we did in the microscopic setting (Eq.~(\ref{eq:dqNmic})):
\BE
\label{eq:dqNmac}
\uu - N_K (\rho) \ni \UU,
\EE
and $\uu$ transports $\rho$ according to~(\ref{eq:pbmactransp}).

%Following XXX or XXX, we define the Wasserstein distance as follows:
%\bigskip

We may now write the catching up algorithm (which we consider for the time being as a theoretical tool): given an initial density $\rho^0$ and a time step $\tau > 0$, build $\rho^1$, \dots, $\rho^n$ according to 
\BE
\label{eq:cumac}
\left \{
\begin{array}{rcll}
  \tilde \rho^{n+1} &= &\left ( \mathbf{id} + \tau \UU \right ) _\# \rho ^n &\hbox{  transport (prediction), }\vseq\\
 \rho^{n+1} &=& P_K \left ( \tilde \rho ^{n+1}\right ) &\hbox{  projection (correction),  }
\end{array}
\right . 
\EE
where the projection is performed in the Wasserstein sense and the set $K$ is the constrained set introduced in \eqref{definition K}. As for the microscopic model, we must check that both steps are well defined (possibly under some restriction on the time step $\tau$), and that the obtained discrete solutions converge to a limit which can be identified as a solution to problem~(\ref{eq:pbmactransp})(\ref{eq:pbmacproj}).

We shall assume here two properties of $\UU$. First we require that it tends to keep people within the room $\OO$, i.e. that for $\tau $ sufficiently small $ \left ( \mathbf{id} + \tau \UU \right ) $ maps $\OO$ onto $\OO$. Obviously, in the case of an emergency evacuation, with an open exit door, we will of course alleviate this assumption by allowing $\UU$ to cross the exit line (we will discuss later on how to ``catch up'' the possible part of the mass that exits $\OO$ because of this). Secondly, we need it regular enough (Lipschitz continuous is sufficient), so that, still for $\tau $ sufficiently small $ \left ( \mathbf{id} + \tau \UU \right ) $ preserves the absolute continuity of the measure, i.e. $\rho\!<\!\!<\!\lcal^d$ implies $ \left ( \mathbf{id} + \tau \UU \right )_\#\rho\!<\!\!<\!\lcal^d $.
This assumption being made, the prediction step is well defined for any measurable field $\UU$.

The key point is the projection step, which differs significantly from the microscopic situation. It is a variational problem in the space of measures, which consists in minimizing the distance to a fixed measure among elements of $K$.
As for existence, standard compactness arguments may be applied, but the question of uniqueness is more delicate. 

 First of all, let us rule out the possibility to establish uniform prox-regularity in the spirit of Definition~\ref{def:pr}. Consider the one-dimensional situation. A Dirac mass at zero projects onto the characteristic function of $(-1/2,1/2)$. More generally, a combination of Dirac masses 
 $$
 \mu = \sum \alpha_n \delta _{x_n}\virg  \sum \alpha_n = 1,
 $$
 (we assume that the distance between  supports of any two of them is always larger than half the total mass carried by the couple, so that they do not interact) projects onto the sum of characteristic function of the flattened Dirac masses:
 $$
  P_K (\mu) =  \sum \ONE _{(x_n-\alpha_n/2,x_n+\alpha_n/2)}.
  $$
  The Wasserstein  cost can be computed straightforwardly, it is 
  $$
  C =\frac 1 {12} \sum \alpha_n ^3.
  $$
%  (XXX si quelqu'un peut verifier le 1/12, ca serait plus sur ... -> je dirais que \c ca marche!)
On the other way around,  consider a density $\rho \in K$ (i.e.\ $\rho(x) \leq 1$ almost everywhere) and $\omega$ the largest open set such that $\rho(x)  =  1$  a.e.\ in $\omega$. This open set $\omega$ is a countable union of open intervals of lengths $(\alpha_n)_{1\leq n < N}$ (with possibly $N= +\infty$). Among all densities which projects onto $K$  at $\rho$, the farthest is the combination of  Dirac masses with weights $\alpha_n$, supported at the middles of intervals, and it is at distance $(\sum \alpha_n^3/12)^{1/2}$. As a consequence, $\eta$ prox-regularity can be expected at some points, but $\eta$ is not bounded away from $0$. Note that some measures are the projection of themselves only, even if they saturate the constraint at every point of their support: consider e.g. a dense open set of total measure $1$ in $(-1,1)$, and define $\rho$ as the characteristic function of its complement. As the saturated zone contains no interval, it cannot be the projection  of a 
 measure which violates the constraint. 
 
Note that this non prox-regularity can be seen as  a natural consequence of the fact that the property in the microscopic setting degenerates as the granularity gets finer (i.e.  when the radius goes to $0$).

On the other hand, the set $K$ enjoys some kind of convexity property: it is geodesically convex (see \cite{Amb}) : 

\begin{definition}
$K \in \PP(\OO)$ is said to be geodesically convex if for any $\mu_0$, $\mu_1$ $\in K$, the geodesic curve $\mu_t$ joining $\mu_0$ and $\mu_1$ belongs to $K$ for all $t \in [0,1]$.
\end{definition}

This property might suggest that uniqueness of the projection can be obtained by standard arguments: in a so-called  Aleksandrov non-positively curved (NPC) length space (see \cite{Ale}),  considering $2$ minimizers and the measure halfway along the geodesic, convexity properties of the distance function $\mu\mapsto W_2(\rho,\mu)^2$ lead to a contradiction. 

 Unfortunately, as soon as  $d \geq 2$,  $\PP(\OO)$ is not NPC and the previous distance function even turns out to have {\it concavity }properties.
 
 Yet, despite the previous assertions (non prox-regularity  and the concavity of the squared distance), other kind of interpolation curves (called {\it generalized geodesics}) can be used to assert well-posedness of the projection problem, without any restriction on the distance from $K$.
 \begin{definition} (Generalized geodesics, see~\cite{Amb}, p. 207)
 A generalised geodesic joining $\mu_0$ to $\mu_1$ with base $\rho$ is a curve defined by
 $$\mu_t = ((1-t) \mathbf{r}_0 + t \mathbf{r}_1)_\# \rho,$$
 where $\mathbf{r}_0$ (resp. $\mathbf{r}_1$) is an optimal transport map from $\rho$ to $\mu_0$ (resp. $\mu_1$).
 \end{definition}
 
Notice that when $\mathbf{r}_0= \mathbf{id}$ this gives again the expression of a geodesic in $\PP$ according to the distance $W_2$. 

This generalized notion can be used to establish the following proposition
 \begin{proposition}
 For any $\rho\in \PP(\OO)$,  the Wasserstein distance to the set $K$ of admissible densities defined in \eqref{definition K}
 %$$
 %K = \set{ \mu \in \PP(\OO)\virg \mu(x) \leq 1}
 %$$
 is attained at a unique point $P_K \rho$. 
 The projection operator $P_K$  is continuous. 
% its projection onto $K$ is defined in a unique way.
 \end{proposition}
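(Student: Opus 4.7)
The plan is to establish existence by a standard compactness-plus-lower-semicontinuity argument, uniqueness via generalised geodesics (this is the crux), and continuity from the usual stability argument based on uniqueness.

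\emph{Existence.} I take a minimising sequence $(\mu_n) \subset K$ for $W_2(\rho,\cdot)$. Since every $\mu_n$ is supported in the bounded set $\overline{\OO}$, the sequence is tight and a subsequence converges narrowly to some $\mu^\star$. The set $K$ is closed under narrow convergence: the total mass and support conditions pass trivially, and the bound $\mu^\star \leq 1$ follows from $K$ being the intersection of $\PP(\OO)$ with the weak-$\ast$ closed unit ball of $L^\infty(\OO)$. Lower semicontinuity of $W_2$ then gives $W_2(\rho,\mu^\star) = \inf_{\mu\in K} W_2(\rho,\mu)$, so $\mu^\star$ is a projection.

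\emph{Uniqueness.} Suppose two projections $\mu_0, \mu_1 \in K$, both at distance $d := W_2(\rho,K)$. Assume first $\rho \ll \lcal^d$; Brenier's theorem provides optimal transport maps $\mathbf{r}_i$ from $\rho$ to $\mu_i$, and I form the generalised geodesic based at $\rho$,
$$\mu_t := ((1-t)\mathbf{r}_0 + t\mathbf{r}_1)_\# \rho.$$
Because $(1-t)\mathbf{r}_0 + t\mathbf{r}_1$ is an admissible (though not necessarily optimal) transport from $\rho$ to $\mu_t$, the pointwise identity $|(1-t)a + tb - x|^2 = (1-t)|a-x|^2 + t|b-x|^2 - t(1-t)|a-b|^2$ integrated against $\rho$ yields
$$W_2^2(\rho,\mu_t) \leq d^2 - t(1-t)\int_\OO |\mathbf{r}_0 - \mathbf{r}_1|^2\, d\rho.$$
The decisive and most delicate step — the very reason a \emph{generalised} geodesic rather than the usual one is invoked — is to show that $\mu_t \in K$ for every $t$. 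This amounts to convexity along generalised geodesics of the functional $F(\mu) := \int f(\mu)\,dx$ with $f(r) = 0$ on $[0,1]$ and $+\infty$ on $(1,\infty)$; the McCann integrand condition that $s \mapsto s^d f(s^{-d})$ be convex and nonincreasing is readily checked, so $F$, and thus $K$, is convex along generalised geodesics (Chapter 9 of~\cite{Amb}). Granting $\mu_t \in K$, the inequality $W_2^2(\rho,\mu_t) \geq d^2$ combined with the previous display forces $\mathbf{r}_0 = \mathbf{r}_1$ $\rho$-a.e., whence $\mu_0 = \mu_1$. For a general $\rho \in \PP(\OO)$ the same argument is carried out at the level of three-plans $\gamma \in \Gamma(\rho,\mu_0,\mu_1)$ whose $(1,2)$- and $(1,3)$-marginals are optimal, setting $\mu_t := ((1-t)\pi_2 + t\pi_3)_\#\gamma$.

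\emph{Continuity.} Let $\rho_n \to \rho$ in $W_2$ and set $\mu_n := P_K \rho_n$. The bound $W_2(\rho_n,\mu_n) \leq W_2(\rho_n, P_K\rho)$ keeps $(\mu_n)$ at bounded Wasserstein distance from $\rho$; all $\mu_n$ being supported in the compact $\overline{\OO}$, the sequence is tight and admits a narrow cluster point $\mu^\star \in K$. Lower semicontinuity gives
$$W_2(\rho,\mu^\star) \leq \liminf_n W_2(\rho_n,\mu_n) \leq \lim_n W_2(\rho_n, P_K\rho) = W_2(\rho, P_K\rho),$$
so $\mu^\star$ is itself a projection and, by the just-proved uniqueness, equals $P_K\rho$. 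The cluster point being unique, the whole sequence converges narrowly, and compactness of $\overline{\OO}$ upgrades this to $W_2$-convergence.
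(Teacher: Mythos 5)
Your argument is correct and follows essentially the same route as the paper: uniqueness comes from the strict convexity of $W_2(\rho,\cdot)^2$ along generalized geodesics based at $\rho$ (Lemma 9.2.1 in \cite{Amb}) combined with convexity of $K$ along those curves, and your McCann-condition check is just another phrasing of the Jacobian computation for $(1-t)\mathbf{r}_0+t\mathbf{r}_1$ that the paper delegates to \cite{MRS10}. You additionally write out the existence and continuity steps, which the paper only labels as standard; the one point treated equally lightly on both sides is the non-absolutely-continuous base $\rho$, where your one-line appeal to three-plans leaves the preservation of the constraint $\mu_t\le 1$ (which the map-based Jacobian argument does not cover) unjustified.
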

 
\begin{proof}
It is easy to check (but one can refer to Lemma 9.2.1 p.206 in~\cite{Amb}), that the square Wasserstein distance enjoys strict convexity properties along generalized geodesics, i.e. $W_2(\rho,\mu_t)^2<(1-t)W_2(\rho,\mu_0)^2+tW_2(\rho,\mu_1)^2$. This gives the uniqueness once we know that the set $K$ is still convex along these generalized geodesics, i.e. that $\mu_0, \mu_1\in K$ implies $\mu_t\in K$. This is obtained by a computation of the Jacobian factor of the map $(1-t) \mathbf{r}_0 + t \mathbf{r}_1$, see \cite{MRS10}.
\end{proof}

Now that the catching-up algorithm is  defined properly, the obtained sequence of discrete solutions allows to obtain an existence result:

\begin{theorem} Let $\rho_{\tau}$ be the piecewise constant interpolation of the discrete densities given by the catching-up algorithm~(\ref{eq:cumac}). 
If $\mathbf{U}$ is a ${C}^1$ velocity field, and $\rho^0 \in K$, then $\rho_\tau$ converges as $\tau$ tends to $0$ to a solution of the macroscopic problem
$$\left\{ \begin{array}{rcl}
\partial_t \rho + \nabla \cdot (\rho \mathbf{u}) & = & 0,\\
\mathbf{u} & = & P_{C_\rho} \mathbf{U}.
\end{array} \right.$$
\end{theorem}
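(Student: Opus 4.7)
The plan is to mimic the Moreau catching-up analysis of Section~3 in the Wasserstein setting, and to exploit the saddle-point structure~(\ref{eq:sppmac}) of the projection step to identify the limit velocity.

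\emph{Compactness of $\rho_\tau$.} Boundedness of $\mathbf{U}$ gives $W_2(\tilde\rho^{n+1},\rho^n)\leq \tau\|\mathbf{U}\|_\infty$, and since $\rho^n\in K$ the projection satisfies $W_2(\rho^{n+1},\tilde\rho^{n+1})\leq W_2(\rho^n,\tilde\rho^{n+1})$, hence $W_2(\rho^{n+1},\rho^n)\leq 2\tau\|\mathbf{U}\|_\infty$. The interpolant $\rho_\tau$ is thus uniformly Lipschitz in time for $W_2$; by Ascoli--Arzel\`a in the complete metric space $(\PP(\overline\OO),W_2)$ a subsequence converges uniformly to an absolutely continuous curve $\rho\in C([0,T];\PP(\overline\OO))$. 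Since $K$ is closed for narrow convergence, $\rho(t)\in K$ for every $t$.

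\emph{Continuity equation at the limit.} For $\tau$ small enough, absolute continuity of $\rho^0$ is preserved along the scheme, so the optimal transport map $\mathbf{s}^n$ from $\tilde\rho^{n+1}$ to $\rho^{n+1}$ exists, and $\mathbf{T}^n:=\mathbf{s}^n\circ(\mathbf{id}+\tau\mathbf{U})$ pushes $\rho^n$ onto $\rho^{n+1}$. Defining the piecewise-in-time velocity $\mathbf{u}_\tau:=(\mathbf{T}^n-\mathbf{id})/\tau$ suitably interpolated along the displacement between $\rho^n$ and $\rho^{n+1}$, the pair $(\rho_\tau,\mathbf{u}_\tau)$ solves the continuity equation in weak form. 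Non-expansivity of $P_K$ combined with $\|\mathbf{U}\|_\infty<\infty$ yields a uniform bound on $\int_0^T\!\int_\OO \rho_\tau|\mathbf{u}_\tau|^2$; the standard compactness machinery for curves in Wasserstein spaces (\cite{Amb}, Ch.~8) then extracts a limit velocity $\mathbf{u}\in L^2(d\rho\,dt)$ satisfying $\partial_t\rho+\nabla\cdot(\rho\mathbf{u})=0$.

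\emph{Identification $\mathbf{u}=P_{C_\rho}\mathbf{U}$.} This is the main obstacle. The saddle-point formulation~(\ref{eq:sppmac}) applied to each Wasserstein projection produces a nonnegative pressure $p_\tau^{n+1}\in H^1_{\rho^{n+1}}(\OO)$ with $\mathbf{u}_\tau=\mathbf{U}-\nabla p_\tau$ on each slab. The $L^2$ bound on $\mathbf{u}_\tau$ together with a Poincar\'e inequality gives a uniform $H^1$ bound on $p_\tau$ modulo additive constants, so that $p_\tau\rightharpoonup p$ weakly in $H^1(\OO)$ and strongly in $L^2(\OO)$ by Rellich. Testing $\mathbf{u}_\tau=\mathbf{U}-\nabla p_\tau$ against smooth fields and using the narrow convergence $\rho_\tau\to\rho$ then yields $\mathbf{u}=\mathbf{U}-\nabla p$ with $p\geq 0$.

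The delicate point is to recover the complementarity $p=0$ on $\{\rho<1\}$ at the limit, because $C_\rho$ depends on $\rho$ in a discontinuous way. I would pass to the limit in the discrete identity $\int_\OO p_\tau(1-\rho_\tau)=0$ by combining the strong $L^2$ convergence of $p_\tau$ with the pointwise bound $\rho_\tau\leq 1$ and the nonnegativity of $p_\tau$: the limit integrand $p(1-\rho)$ is then nonnegative with vanishing integral, hence is zero almost everywhere. Together with $\mathbf{u}=\mathbf{U}-\nabla p$ and $p\geq 0$, this shows $(\mathbf{u},p)$ satisfies the continuous saddle-point system~(\ref{eq:sppmac}), which uniquely characterizes $\mathbf{u}=P_{C_\rho}\mathbf{U}$ and concludes the proof.
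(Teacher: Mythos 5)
Your overall architecture --- catching-up interpolations, compactness in $W_2$, a continuity equation for an interpolated curve, and a pressure decomposition coming from the projection step --- follows the same lines as the paper's sketch, but the execution has genuine gaps at the key step. The exact slab-wise identity $\uu_\tau=\UU-\nabla p_\tau$ does not follow from the saddle-point system (\ref{eq:sppmac}): that system characterizes the $L^2$-projection of a \emph{velocity} onto $C_\rho$, whereas the scheme performs a $W_2$-projection of a \emph{measure} onto $K$. The fact that the displacement of this Wasserstein projection is the gradient of a nonnegative pressure vanishing on $[\rho<1]$ is precisely the nontrivial lemma of \cite{MRS10} that the paper invokes, and even with it the discrete velocity $\vv^{n+1}=(\mathbf{id}-\bft^{n+1}\circ\mathbf{r}^{n+1})/\tau$ only satisfies $\UU=\vv_\tau+\nabla p_\tau+\varepsilon_\tau$ with an error term whose uniform vanishing is where the $C^1$ regularity of $\UU$ enters (through the composition with $(\mathbf{id}+\tau\UU)^{-1}$); your proposal has no counterpart of this lemma and no error analysis. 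Relatedly, the ``non-expansivity of $P_K$'' you invoke is not available: $\PP(\OO)$ with $W_2$ is positively curved for $d\geq 2$ and the paper only proves that $P_K$ is well defined and continuous; fortunately the bounds you need already follow from the minimality of the projection and the triangle inequality, so this is a misstatement rather than a fatal flaw.

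The identification of the limit is also incomplete. Knowing $\uu=\UU-\nabla p$ with $p\geq 0$ and $p(1-\rho)=0$ a.e.\ does not yet give $\uu=P_{C_\rho}\UU$: one must also show that the limit velocity is admissible, $\uu\in C_\rho$, and that the orthogonality $\int_\OO \uu\cdot\nabla p=0$ holds --- i.e.\ the second line and the complementarity condition of (\ref{eq:sppmac}) --- and your last sentence simply asserts that the system is satisfied. Moreover, the strong $L^2$ convergence of $p_\tau$ on which your complementarity argument rests is not justified: Rellich gives spatial compactness at fixed time, but $p_\tau$ has no time regularity, so strong convergence in $L^2((0,T)\times\OO)$ (needed to pair with the merely weak-$*$ convergent $\rho_\tau$) requires an extra argument; likewise a Poincar\'e inequality ``modulo additive constants'' is not enough, since the complementarity uses the normalization $p_\tau=0$ on $[\rho_\tau<1]$, a set that varies with $\tau$, so uniformity of the constant must be addressed. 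These are exactly the points the paper delegates to \cite{MRS10} and to the a priori estimates it mentions; as written, your proposal assumes them rather than proves them.
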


\begin{proof}
Let us first describe the discrete quantities we obtain thanks to the catching up algorithm. We denote by $\mathbf{r}^{n+1}$ the optimal transport between the projected density $\rho^{n+1} = P_K \tilde \rho^{n+1}$ and $\tilde \rho^{n+1}$, and we write $\mathbf{t}^{n+1} =  (\mathbf{id} + \tau \mathbf{U})^{-1}$ (see figure \ref{fig:transports}). These transport maps allow to define a discrete velocity as follows:
$$\mathbf{v}^{n+1} = \frac{\mathbf{id} - \mathbf{t}^{n+1} \circ \mathbf{r}^{n+1}}{\tau}.$$

\begin{figure}[h!]
\begin{center}
\psfrag{t}[l]{$\mathbf{t^{n+1}} = (\mathbf{id} + \tau \mathbf{U})^{-1}$}
\psfrag{r}[l]{$\mathbf{r}^{n+1}$}
\psfrag{rho}[l]{$\rho^{n}$}
\psfrag{trho}[l]{$\tilde \rho^{n+1}$}
\psfrag{rho1}[l]{$\rho^{n+1}$}
\psfrag{K}[l]{$K = [\rho \leq 1]$}
\includegraphics[width=0.5\linewidth]{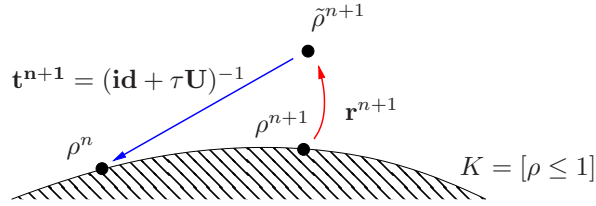}
\caption{Definition of the discrete transport maps}
\label{fig:transports}
\end{center}
\end{figure}

We then define two different interpolations of these quantities. First, the piecewise constant interpolation is given by :
$$\left\{ \begin{array}{rcl} \rho_\tau(t,.) & = & \rho^{n+1}\\
\mathbf{v}_\tau (t,.) &  = & \mathbf{v}^{n+1}
\end{array} \right. \quad \textmd{ if } \; t \in ]n \tau, (n+1) \tau].$$
Using \cite{MRS10} for the projection part, it is possible to prove that $\mathbf{v}_\tau$ satisfies the following discrete decomposition
$$\mathbf{U} = \mathbf{v}_\tau + \nabla p_\tau + \varepsilon_\tau,$$
where $\varepsilon_\tau$ converges uniformly to $0$ as $\tau$ tends to $0$.
We also define a continuous interpolation of $(\rho^n)_n$ as follows
$$\tilde \rho_\tau (t,.) = (\mathbf{T}^{n+1}_t)_\# \rho^{n+1}, \quad  \textmd{ if } \; t \in ]n \tau, (n+1) \tau],$$
where $\mathbf{T}^{n+1}_t = \left( (t - n \tau) \mathbf{v}^{n+1} + \mathbf{t}^{n+1} \circ \mathbf{r}^{n+1} \right)$. This second interpolation satisfies the transport equation at velocity $\mathbf{\tilde v}^{n+1} = \mathbf{v}^{n+1} \circ (\mathbf{T}^{n+1}_t)^{-1}$.

It is possible to prove a priori estimates on these interpolated curves, and therefore prove that they both converge to the same limit. The continuous interpolation of $(\rho^n)_n$ gives that the limit density satisfies a transport equation, and the discrete decomposition of the piecewise constant interpolation proves that the limit velocity is indeed the projection of the desired velocity onto $C_\rho$.
\end{proof}

We finish this section by considering the case where, due to modeling  reasons, we allow the vector field $\UU$ to let the mass exit through a part of the boundary. This means that $\mathbf{id}+\tau\UU$ is no longer supposed to map $\OO$ into $\OO$ but the segment connecting $x$ to $x+\tau\UU(x)$ is allowed to cross $\partial\OO$ in a prescribed subset $\Gamma\subset\partial\OO$ which stands for the exit. In such a case the transport step of the algorithm does not need to be changed, but we have to face, during the projection step, a density $\tilde\rho^{n+1}$ which is no longer supported in $\overline\OO$. To bring back the mass to the original domain we want to consider a modified set $K$, that we call $K_\Gamma$:
\begin{equation}\label{definition KGamma}
K_\Gamma = \set{ \rho\in\PP(\OO)\,:\,\rho=\rho_\OO+\rho_\Gamma\virg \supp (\rho_\Gamma) \subset \Gamma\virg 0\leq \rho_\OO (x) \leq 1\hbox{ for a.e.  } x}.
\end{equation}

The reason for this choice is the following: we consider that as soon as a particle reaches $\Gamma$, instead of following its movement after $\Gamma$, we leave it on it. This is done for simplicity, but it only means that we are no longer concerned with what happens to the particles that have reached $\Gamma$, not that they are really blocked on the exit. Obviously, we needed to withdraw the density constraint on $\Gamma$, so as to let particles stay on it, and also to represent the fact that $\Gamma$ stands actually for everything that happens at the door {\it and beyond}.

Once the set $K_\Gamma$ is introduced, the projection step is done by defining 
$$ \rho^{n+1} = P_{K_\Gamma} \left ( \tilde \rho ^{n+1}\right ),$$
which means that we take a measure whose support may go beyond $\Omega$ and we project it onto the measures over $\OO$ satisfying some extra density constraint. We stress that the same kind of argument (projecting on a set of measures concentrated on $\Omega$) could also be used, in the case with no exit, to withdraw the (yet, natural) assumption that $\mathbf{id}+\tau\UU$ maps $\OO$ into $\OO$.

The mathematical problem with this modified $K_\Gamma$ is much trickier. One of the difficulties, that prevent the usual theory to be applied, is the fact that the set $K_\Gamma$ loses some of the properties that $K$ had previously (in particular it is no more  geodesically convex: the geodesic - for the $W_2$ distance - between two points of $K_\Gamma$ could go out of  $K_\Gamma$). In particular we have no clue about the uniqueness of the projection, but the algorithm works in the same way if we accept to take any minimizer of the distance. 

\subsection{Gradient flow setting}
In case the spontaneous velocity  is the gradient of some dissatisfaction function (e.g.\ distance to the exit in case of an emergency evacuation), the microscopic model can be put into a gradient flow form:
$$
\frac {d\qq}{dt } \in  - \partial \varphi \virg \varphi = \left ( \Psi + I_K\right ) ,
$$
where $\partial \varphi$  is the Fr\'echet subdifferential of $\varphi$ defined by 
$$
\partial \varphi(\qq) = \set{ \vv \in \setR^{2N}\virg  \varphi(\tilde{\qq} )-\varphi(\qq)-\vv \cdot (\tilde{\qq} -\qq) \geq o \ (|\tilde{\qq} -\qq|)
}.
$$
%$$
%\partial \varphi(\qq) = \set{ \vv \in \setR^{2N}\virg \liminf_{\tilde{\qq} \to \qq}  \dfrac{\varphi(\tilde{\qq} %)-\varphi(\qq)-\vv \cdot (\tilde{\qq} -\qq)}{|\tilde{\qq} -\qq|} \geq 0
%}.
%$$
The function $\Psi$ that we consider is the total dissatisfaction, i.e. $\Psi(q)=\sum_i D(q_i)$, where $D(x)$ may be, as we said, the distance to the exit. 
%(computed as an euclidean or geodesic distance, according to the possible presence of obstacles in the domain).
Indeed, as $K$ is uniformly prox-regular, the proximal normal cone identifies with the Fr\'echet subdifferential of $I_K$ hence $\partial \varphi = \nabla \Psi + N_K(\qq)  $ (see \cite{Rockafellar} for more details).

In the macroscopic case, under the same assumption that $\UU$ is a gradient $-\nabla D$, then one defines the global dissatisfaction function in a continuous setting:
$$
\Psi(\rho)  = \int D(x) \, \rho(x) \, dx,
$$
and the overall process can be seen as a gradient flow in the Wasserstein space, i.e. $\rho$ is advected by $\uu$, 
$$
\partial_t \rho  + \nabla \cdot (\rho \uu) = 0,
$$
with 
$$
\uu \in -\partial \varphi (\rho) \hbox{ for a.e. } t,
$$
where $\varphi = \Psi + I_K$, and $\partial \varphi $ is defined in the following sense (see~\cite{Amb}):
%
%%% ajout sous-differentiel
\begin{definition}
The strong Fr\'echet subdifferential of a function $\varphi$ at $\rho$ is the set of fields $\mathbf{u}$ such that for all transport maps $\mathbf{t}$, the following inequality holds true
$$\varphi(\rho) + \int_{\mathbb{R}^d} 
%\langle 
\mathbf{u}(x)\cdot(
 \mathbf{t}(x) - x 
)
%\rangle
 d\rho(x) \; \leq \; \varphi(\mathbf{t}_\# \rho) + o(||\mathbf{t}-\mathbf{id}||_{L^2(\rho)}) .$$
Notice that the definition we gave before of $N_K$ is nothing but a particular case of this one, when $\varphi=I_K$.
\end{definition}
%%%

The main advantage of this gradient-flow formulation is the fact that it allows to handle vector fields $\UU$ less regular than what we need in the general case (where it is natural to require $\UU$ to be Lipschitz continuous, i.e. $D\in C^{1,1}$).  For the whole theory on gradient flows, first in a finite-dimensional setting, then in Hilbert spaces, and finally in metric spaces and particularly in $\PP(\OO)$, we refer again to \cite{Amb}.

In particular, the catching up algorithms made by the coupling of a prediction and a correction step, may be replaced in the case of a gradient structure by a single-step procedure, called {\it proximal algorithm}, where
$$q^{n+1}\in \argmin \left\{ \varphi(q)+\frac {|q-q^n|^2}{2\tau} \right\}.$$
This algorithm has also been formalized in a metric setting, where it is known as {\it minimizing movements} (see \cite{DeG, MovMin}): it has first been applied to the case of $\PP(\OO)$ with the Wasserstein metric by Jordan, Kinderlehrer and Otto in \cite{Ott1}.

The details about a gradient flow approach to the macroscopic framework of crowd motion are contained in \cite{MRS10}, where both the case with no exit (i.e. with the constraint $\rho\in K$) and with exit ($\rho\in K_\Gamma$) are dealt with, the second one being much trickier than the first.

\section{Numerical solution} 

\subsection{Microscopic model}

Moreau's catching up algorithm suggests a strategy to build approximate solutions to the evolution problem. Yet, projecting a configuration $\qq$ onto $K$ is not straightforward. The scheme we propose extends ideas introduced in~\cite{maury06} for granular flows. It consists in performing a catching up step with   $K$  replaced by some kind of inner local convex approximation. More precisely, considering that the configuration $\qq^n$ at time $t^n$ is known, the predicted configuration is obtained by 
\begin{equation}
\label{eq:nummicropred}
\tilde \qq^{n+1} = \qq^n + \tau \UU(\qq^n).
\end{equation}
Then $\qq^{n+1}$ is obtained by projecting  $\tilde \qq^{n+1}$ onto $K_{\qq^n}$
\begin{equation}
\label{eq:nummicrocorr}
\qq^{n+1} = P_{K_{\qq^n}}  \tilde \qq^{n+1},
\end{equation}
 where $K_\qq$ stands for
$$
K_{\qq} = \set{ \tilde \qq\virg D_{ij}(\qq) + \GG_{ij}\cdot (\tilde \qq - \qq) \geq 0\quad \forall i\neq j }.
$$

\begin{center}
\begin{figure}
\centering
\begin{tabular}{c}
\psfrag{a}[l]{$\qq^n$}
\psfrag{b}[l]{$\qq^n +\tau \UU( \qq^n) $}
\psfrag{c}[l]{$\hspace{-7.5pt} \qq^n +\tau \UU( \qq^n)$}
\psfrag{d}[l]{$\qq^{n+1}$}
\psfrag{e}[l]{$\qq^{n+1}$}
\psfrag{f}[l]{$\hspace{-2.5pt}\bar{\qq}^{n+1}$}
\psfrag{g}[l]{$ \bar{\qq}^{n+1}$}
\psfrag{q}[l]{$ \Large  K $}
\psfrag{t}[l]{\hspace{-1cm}$ \Large{K_{\qq^n}}$}
%\psfrag{q}[l]{$ \color{ColorFigbleu} \LARGE{Q_0} $}
%\psfrag{t}[l]{$ \color{ColorFigmag}\Large{K(\qqq_k^n)}$}
\includegraphics[width=0.65\linewidth]{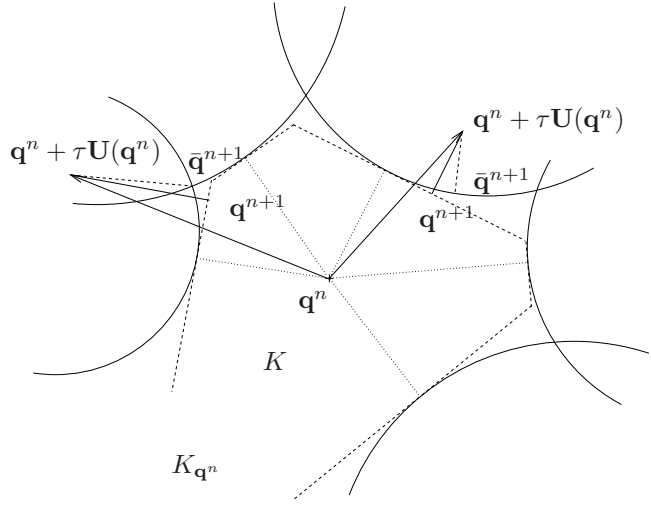} 
\end{tabular}
\caption{Theoretical and numerical projections.}
\label{fig:projs}
\end{figure}
\end{center} 
  \noindent In Figure~\ref{fig:projs},  we illustrate the set $K \subset \setR^{2N}$, intersection of sets $K_{ij}$ whose boundaries are plotted in solid line. The set $K_{\qq^n} $ is delimited by the dashed line. The theoretical and numerical projections, respectively $ \bar{\qq}^{n+1}:= P_{K} (\qq^n +\tau \UU( \qq^n)  )$ and $\qq ^{n+1}$ are represented (for two examples of $\UU( \qq^n)$). Indeed, as $K $ is uniformly prox-regular, the projection onto $K$ of $\qq^n +\tau U( \qq^n)$ is well-defined for $\tau $ small enough. 
  The replacement of $K $ by the convex set $ K_{\qq^n}$ is convenient because it allows us to use classical numerical methods to compute this projection. 
Indeed, this projection problem can be reformulated in a saddle-point form which can be solved by Uzawa algorithm : find 
$(\qq^{n+1}, {\mathbf \lambda}) \in \setR^{2N} \times (\setR^+)^{\frac{N(N-1)}{2}}$ satisfying
\BE\left\{
\begin{array}{l}
 \displaystyle \qq^{n+1}=  \tilde \qq^{n+1}+ \sum \lambda_{ij} \ \GG_{ij}(\qq^n)
\vseq \\
 \displaystyle \forall i<j,\ D_{ij}(\qq^n)+\GG_{ij}(\qq^n) \cdot
(\qq^{n+1}-\qq^{n} ) \geq 0
\vseq \\
\displaystyle \sum \lambda_{ij}\  ( D_{ij}(\qq^n)+\GG_{ij}(\qq^n) \cdot
(\qq^{n+1}- \qq^{n})=0.
\end{array}
\right.
\label{eq:projs}
\EE
%$$\left\{ \begin{array}{l}
 %          \uu  +\phantom{}^{t}B \bflambda = \UU  \vsp \\
  %         \bfmu \cdot( \BB\uu- \Dvec )   \leq 0 \virg \forall \bfmu  \geq 0\vsp \\
  %         \bflambda  \cdot ( \BB\uu - \Dvec )  = 0,
    %      \end{array}
 %\right. $$
Note that the Lagrange multipliers are %nonnegative and 
not unique in
general. Indeed there exist configurations $\qq^n $ with strictly more
than $2N$ active constraints so that the associated gradients
$\GG_{ij}(\qq^n)$ are linearly dependent.  

  The matrix appearing in Uzawa algorithm is  $C=B\phantom{}^{t}{B } $ where $B $ is the matrix whose rows are vectors $\GG_{ij}(\qq)$. In \cite{MVm2an}, the authors quantify how the condition number of matrix $C$ varies with the parameter $\eta_\qq$ (setting a lower bound of the local prox-regularity of $K$ at point $\qq $) when $C$ is non-singular. 
As a consequence  we expect that the Uzawa algorithm converges less quickly for configurations with low local prox-regularity. In numerical simulations, we noticed indeed that solving the saddle-point problem requires more iterations in case of a jam.
%produces two sequences $(\vv^k)\in \left(\R^{2N}\right)^\mathbb{N}$ and $(\bfmu^k)\in \left((\R^+)^{\frac{N(N-1)}{2}}\right)^\mathbb{N} $ according to
%\begin{eqnarray}
 %   \bfmu^0 & = & 0 \nonumber
  %  \\
   % \vv^{k+1} & = & \UU - \phantom{}^{t}\BB\bfmu^k \label{alg1}
    %\\
    %\bfmu^{k+1} & = & \Pi_+\left(\bfmu^k + \rho \left[
    %\BB\vv^{k+1}- \Dvec  \right] \right), \label{alg2}
%\end{eqnarray}
%where $\Pi_+$ is the euclidean projection onto the cone of vectors with nonnegative
%components (a simple cut-off in practice), and $ \rho> 0 $ is a fixed parameter.

\bigskip 

\para{Numerical analysis}
In the proposed scheme (\ref{eq:nummicrocorr}), the replacement of $K $ by the convex set $ K_{\qq^n}$ is computationally convenient
but arises many difficulties in the numerical analysis. The 
convergence result is based on the fact that $ K_{\qq}$ is a good
approximation of $K $ near $\qq$.

\begin{theorem} 
Let us denote by $\qq_\tau$ the continuous piecewise linear function satisfying $\qq_\tau(t^n)= \qq^n $ defined by (\ref{eq:nummicrocorr}). 
Let $\UU$  be Lipschitz and bounded, for all $T>0$, the sequence $(\qq_\tau)_\tau$ uniformly converges to $\qq$ defined in Theorem~\ref{th:wpmicro}, when $\tau $ tends to 0.  
\end{theorem}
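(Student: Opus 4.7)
The scheme to analyze is a time-discretization of the sweeping process of Theorem~\ref{th:wpmicro} in which the (non-convex) set $K$ has been replaced, at each step, by the convex inner approximation $K_{\qq^n}$. The overall plan is the classical one: prove feasibility and well-posedness of the iterates, obtain uniform Lipschitz bounds, extract a uniformly converging subsequence by Arzel\`a-Ascoli, and identify the limit as the unique solution given by Theorem~\ref{th:wpmicro}. Since $D_{ij}(\qq)=|\qq_j-\qq_i|-2r$ is convex, one has
$$D_{ij}(\tilde\qq) \geq D_{ij}(\qq^n) + \GG_{ij}(\qq^n)\cdot(\tilde\qq-\qq^n),$$
so $K_{\qq^n}\subseteq K$. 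In particular, $\qq^n\in K_{\qq^n}$, every iterate stays in $K$, and since $K_{\qq^n}$ is a nonempty intersection of closed half-spaces, $P_{K_{\qq^n}}$ is well-defined and $1$-Lipschitz.

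The estimate
$$\|\qq^{n+1}-\qq^n\| \leq \|\tilde\qq^{n+1}-\qq^n\| = \tau\,\|\UU(\qq^n)\| \leq C\tau$$
is then immediate because $\qq^n\in K_{\qq^n}$. This yields a uniform Lipschitz bound on the piecewise linear interpolant $\qq_\tau$, and Arzel\`a-Ascoli provides a subsequence converging uniformly on $[0,T]$ to some absolutely continuous curve $\qq^\star$ with values in $K$ (the latter being closed).

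The identification step uses the saddle-point system (\ref{eq:projs}): there exist multipliers $\lambda_{ij}^n\geq 0$, vanishing outside the active set of $K_{\qq^n}$ at $\qq^{n+1}$, such that
$$\frac{\qq^{n+1}-\qq^n}{\tau}-\UU(\qq^n) \;=\; -\frac{1}{\tau}\sum_{i<j}\lambda_{ij}^n\,\GG_{ij}(\qq^n) \;\in\; -N_{K_{\qq^n}}(\qq^{n+1}).$$
Boundedness of $\UU$, together with the reverse triangle estimate (\ref{eq:revtriang}) on positively linearly independent gradients, yields a uniform bound $\sum_{i<j}\lambda_{ij}^n\leq C\tau$. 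Multiplying the above identity by a test displacement and integrating in time, one may pass to the limit $\tau\to 0$ along the subsequence and obtain, in a weak sense, a decomposition $\frac{d\qq^\star}{dt}-\UU(\qq^\star)\in -N_K(\qq^\star)$ a.e.; by Theorem~\ref{th:wpmicro} this limit is unique, so the whole family $\qq_\tau$ converges uniformly to $\qq$.

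The main obstacle is the last passage to the limit, because the discrete normals live in $N_{K_{\qq^n}}(\qq^{n+1})$ and not in $N_K(\qq^{n+1})$: the linearized active set at step $n$ need not coincide with the true active set of $K$ at $\qq^{n+1}$, nor do the gradients $\GG_{ij}(\qq^n)$ and $\GG_{ij}(\qq^{n+1})$. Controlling this discrepancy requires using uniform prox-regularity of $K$ (Proposition on $\eta$-prox-regularity above) to ensure that proximal normals vary upper-semicontinuously with respect to both the base point and small perturbations of the generating vectors, together with the uniform bound on the multipliers. A fully detailed execution of this argument, adapted to the particular structure of the disk-packing constraints $D_{ij}$, is carried out in \cite{MVm2an, Numschemejv}, and amounts essentially to reproducing Moreau's catching-up convergence proof with the replacement $K\leftrightarrow K_{\qq^n}$ controlled by the prox-regularity constant.
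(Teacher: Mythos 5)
Your preparatory steps are sound and coincide with the paper's plan: the observation that convexity of $D_{ij}$ gives $K_{\qq^n}\subseteq K$, hence feasibility of the iterates, the $1$-Lipschitz projection estimate $|\qq^{n+1}-\qq^n|\leq \tau|\UU(\qq^n)|$, the Arzel\`a--Ascoli extraction, and the reduction to identifying the limit via the uniqueness statement of Theorem~\ref{th:wpmicro} are all exactly what the paper does. The problem is that the identification step --- which is the actual content of the theorem --- is not carried out in your proposal: you write the discrete inclusion $\uu^n-\UU(\qq^n)\in -\frac1\tau\sum\lambda^n_{ij}\GG_{ij}(\qq^n)\subset -N_{K_{\qq^n}}(\qq^{n+1})$, correctly flag that these normals live in the cone of the linearized set at $\qq^n$ rather than in $N_K(\qq^{n+1})$, and then appeal to ``upper semicontinuity of proximal normals under prox-regularity'' and to \cite{MVm2an,Numschemejv} without giving the mechanism. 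As stated, that semicontinuity claim is precisely the delicate point (the active set of $K_{\qq^n}$ at $\qq^{n+1}$ involves pairs that are only nearly in contact at $\qq^n$, and the generating vectors $\GG_{ij}(\qq^n)$ move with $n$), so the heart of the proof is asserted rather than proved. A minor related caveat: the bound $\sum\lambda^n_{ij}\leq C\tau$ invokes the reverse triangle inequality (\ref{eq:revtriang}), which is stated for gradients of constraints active at a configuration in $K$; your multipliers attach to linearized constraints that may correspond to non-touching pairs, so one needs (as in the references) a uniform extension of (\ref{eq:revtriang}) to nearly-active pairs.

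For comparison, the paper's sketch avoids manipulating normal cones directly and instead works with objects that pass to the limit more easily: from $\qq^{n+1}=P_{K_{\qq^n}}(\qq^n+\tau\UU(\qq^n))$ it derives the variational inequality $(\UU(\qq^n)-\uu^n)\cdot(\tilde\qq-\qq^{n+1})\leq 2\|\UU\|_\infty\, d_{K_{\qq^n}}(\tilde\qq)$ for every test point $\tilde\qq$, so the whole limit passage reduces to the local continuity of the map $(\qq,\qq_0)\mapsto d_{K_{\qq_0}}(\qq)$ near the diagonal; this continuity is obtained by proving convergence of the projections $P_{K_{\qq^n}}(\tilde\qq)\to P_{K_{\qq}}(\tilde\qq)$ through their saddle-point form (\ref{eq:projs}), the reverse triangle inequality supplying the bound on the Lagrange multipliers needed for compactness. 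The limit curve then satisfies $\frac{d\qq}{dt}+N(K_\qq,\qq)\ni\UU(\qq)$, and the conclusion follows from the elementary identity $N(K_\qq,\qq)=N(K,\qq)$ for $\qq\in K$, a step your argument also needs but does not isolate. If you want to complete your normal-cone route you would essentially have to reconstruct this distance-function/projection-continuity argument (or the metric qualification estimate $d_{K_\qq}(\tilde\qq)\leq\theta\sum d_{K_\qq^{ij}}(\tilde\qq)$ of \cite{convorder}), so the cleaner course is to adopt the paper's device.
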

\begin{proof}
Here we just give a sketch of the proof. We refer the reader to \cite{Numschemejv} for the details. 
It can be easily proved that $(\qq_\tau)_\tau $ is bounded and so a
convergent subsequence can be extracted.  Thanks to the uniqueness
result in Theorem \ref{th:wpmicro}, it suffices to check that the
limit function (which is absolutely continuous) satisfies the
differential inclusion.
Since $ \qq^{n+1}= P_{K_{\qq^n}} (\qq^n+ \tau \UU(\qq^n))$, it comes for all $\tilde \qq $,  
$$(\qq^n+ \tau \UU(\qq^n)-\qq^{n+1}) \cdot  (\tilde \qq - \qq^{n+1} ) \leq 
|\qq^n+ \tau \UU(\qq^n)-\qq^{n+1}|\,  d_{K_{\qq^n}}(\tilde \qq ). $$
By dividing by $\tau $, we obtain for all $ \tilde \qq $
$$(\UU(\qq^n)-\uu^n) \cdot  (\tilde \qq -\qq^{n+1} ) \leq 
|\UU(\qq^n)-\uu^n| \, d_{K_{\qq^n}}(\tilde \qq) \leq 2\|\UU\|_\infty \,  d_{K_{\qq^n}}(\tilde \qq) $$ where 
 $ \uu^n = (\qq^{n+1}-\qq^n)/ \tau $ represents the discrete actual velocity. 
We aim at passing to the limit in the previous inequality and the crucial term is the last one. 
Thus the convergence result rests on the local continuity of the map $(\qq, \qq_0)  \longrightarrow d_{K_{\qq_0}}(\qq) $
in a neighborhood of the set $\{(\qq,\qq_0), \ \qq=\qq_0\}$. More
precisely it can be checked that for all $\qq \in K$ and all $ \tilde
\qq \in B(\qq, r_\qq)$, $$P_{K_{\qq^n}}( \tilde \qq)  \xrightarrow[n
  \to  +\infty]{} P_{K_{\qq}}( \tilde \qq) $$ by using the saddle
point form of these projections (\ref{eq:projs}). Thanks to the
reverse triangle inequality, the Lagrange multipliers (which are not
unique in general) are bounded. Then compactness arguments allow us to
obtain the convergence of the projections. We deduce that the limit function
$\qq$ satisfies the following differential inclusion: $$
\dfrac{d\qq}{dt} + N(K_\qq, \qq) \ni \UU(\qq) \quad \textmd{ a.e. in } \, [0,T]. $$
As for all $\qq \in Q $, $ N(K_\qq, \qq) =N(K, \qq)$,  the required
result is proved. 
\end{proof}

The convergence order of this scheme will be specified in a
forthcoming paper \cite{convorder}. This more accurate result is based on metric qualification
conditions between sets $ K^{ij}_\qq=\set{ \tilde \qq\virg D_{ij}(\qq) +
  \GG_{ij}\cdot (\tilde \qq - \qq) \geq 0 }$. More precisely there
exists a constant $ \theta >0 $ such for all $\qq \in K$ and all $ \tilde
\qq \in B(\qq, r_\qq)$,  $$ d_{K_\qq} ( \tilde \qq)\leq  \theta \sum
d_{K^{ij}_\qq} ( \tilde \qq)   .$$ This result rests on the reverse
triangle inequality~(\ref{eq:revtriang}).

\subsection{Macroscopic model}
%Let us first mention that a straight discretization of System XXX based  on standard techniques is likely to fail. 

The strategy we propose relies again  on the catching up philosophy which already made it possible to establish an existence result. The density is first advected by the spontaneous velocity field (with no consideration of the congestion constraint), and then projected (in the Wasserstein sense) onto the set of feasible densities. The time discretization scheme writes as follows: Given a time step $\tau > 0$, an initial configuration $\rho^0$, approximate configurations $\rho^1$, \dots, $\rho^n$ are built recursively according to 

\begin{equation}
\label{eq:nummacropred}
\tilde \rho^{n+1} =\left ( \mathbf{id} + \tau \UU\right ) _\# \rho^n  \quad \hbox{(prediction step)},
\end{equation}
\begin{equation}
\label{eq:nummacrocorr}
\rho^{n+1} = P_{K}  \tilde \rho^{n+1}  \quad \hbox{(correction step)}.
\end{equation}
The first step consists in transporting $\rho^n$ according to the given transport map $\mathbf{id} + \tau \UU$. 
%
%%% ajout transport lagrangien
We simply transport the center of each cell at velocity $\mathbf{U}$ during a time $\tau$, and then distribute the mass of the transported cell on the different cells of the mesh it intersects, as illustrated in Fig.~\ref{fig:lagrangien}. The transported density writes 
$$\tilde \rho^{n+1} = \frac{1}{dx dy} \sum_{i,j} \sum_{k,l} \rho^n_{i,j} \; \mathcal{A}\left(\textmd{ transported cell (i,j) } \cap \textmd{ cell (k,l) } \right),$$
where $\mathcal{A}(B)$ represents the area of the set $B$.\\
\begin{figure}[h!]
\begin{center}
\psfrag{c}[l]{$c_{i,j}$}
\psfrag{cp}[l]{$c'_{i,j}$}
\psfrag{U}[l]{$\mathbf{id} + \tau \mathbf{U}$}
\includegraphics[width=0.5\linewidth]{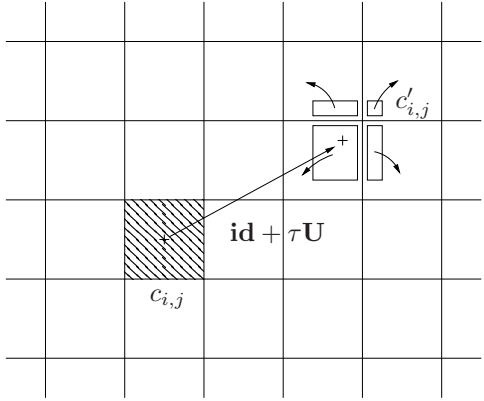}
\caption{Lagrangian transport of the density}
\label{fig:lagrangien}
\end{center}
\end{figure}
%%%%
%
The second step is less standard. 
The scheme we propose to approximate the 
 projection onto $K$ with respect to the Wasserstein distance is based on the following considerations\footnotemark
 \footnotetext{To alleviate the presentation, we do not normalize measures to a unit total mass, but the considerations on optimal transportation and Wasserstein distance for probability measures which we presented at the beginning  of Section~\ref{sec:theomac} are straightforwardly extended to measures with arbitrary  mass.}: Consider a domain $\omega$ and a non-zero density $\mu$ supported by $\overline\omega$. For $\eps > 0 $, $1_\omega + \eps \mu \notin K$, and its projection  onto $K$ is identically $1$ in $\omega$. Let us denote by $\eps\uu$ the displacement field which corresponds to the optimal transport between $1_\omega + \eps \mu$ and $P_K(1_\omega + \eps \mu)$. One has
 $$
 \left ( \mathbf{id} + \eps\uu \right ) _\#  \left ( 1_\omega + \eps \mu \right )  = P_K(1_\omega + \eps \mu).
 $$
 As the latter is identically $1$ in $\omega$, one has
 $$
 \frac {1 + \eps \mu}{det (I +\eps\nabla  \uu )} = 1,
 $$
 so that, at the first order in $\eps$, 
 $$
 \nabla \cdot \eps\uu = \eps \mu.
 $$
 As $\mathbf{id} + \eps\uu $ is an optimal map, $\uu$ is a gradient~: $\uu = -\nabla p$, where $p$ verifies the Poisson problem
 $$
 -\Delta p =  \mu .
 $$
 Since we are considering a violation of the density constraint on a set $\omega$ and assuming that, after the projection, the density will be saturated on $\omega$ itself, we are only interested in finding the mass that will exit $\omega$ when the displacement is given by $\eps\uu$. This means, at a first order approximation, we only need to estimate the flux of density through the boundary $\partial \omega$, i.e.  $\uu \cdot n  = -\partial p /\partial n$. 
Now consider the stochastic interpretation of the Poisson equation 
 $$
 -\Delta p =  \mu .
 $$
Considering  that $\mu$ is a probability measure, we consider a random variable $X\in \omega$  which follows  the law of  probability  with density $\mu$.
We now consider a  Brownian motion stemming from $X$. Its path crosses $\partial \omega$ for the first time at $Y$. The random variable $Y\in \partial \omega$ is known to follow the probability law with density $-\partial p / \partial n$ on $\partial\omega$ (see~\cite{doob}).

%%% ajout schema projection macro
The idea is therefore to redistribute the exceeding mass of the saturated zone in the following way (see Fig.~\ref{fig:proj_stoch}): For each saturated cell, a random walk is started, which transports the exceedind mass $m = (\rho - 1)|C|$, where $|C|$ is the measure of the cell. When this random walk encounters a non-saturated cell, it gets rid of as much mass as it can, and continues as long as the transported mass is not fully distributed. When all the saturated cells have been treated, the obtained density $\rho^{n+1}$ is admissible.

\begin{figure}[h!]
\begin{center}
\includegraphics[width=0.6\linewidth]{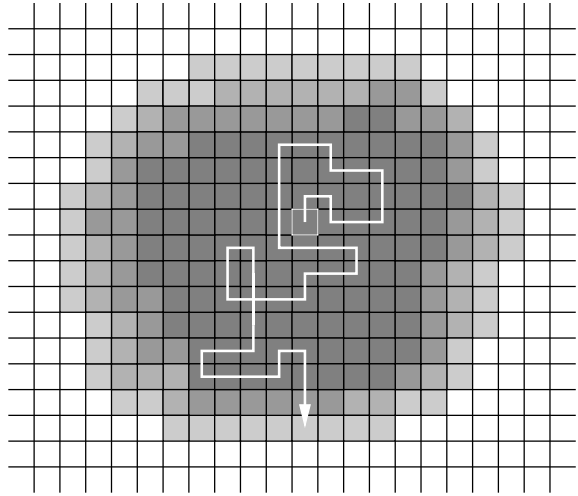}\\
\caption{Stochastic projection of the transported density}
\label{fig:proj_stoch}
\end{center}
\end{figure}
%%%

%XXXXXX Remark :  schema base sur  JKO, base sur les plans de transport (algorithme du simplexe et tout ca ..)

Let us notice that the gradient flow structure when $\UU=-\nabla D$ would also allow for another time-discretized algorithm, namely the proximal (or minimizing movement, or Jordan-Kinderlehrer-Otto) one instead of the prediction-correction one. This would lead to a sequence of minimization problems involving the computation of the distance $W_2$. The formulation by means of the transport plans, that we did not develop here for the sake of readability, transforms these problems into linear programming problems, that could a priori be solved through a simplex algorithm. Yet, it  turns out to be so  slow  that 2D problems could not be efficiently solved in this way.

% density $\rho$ supported by a domain $\omega$
\bigskip

\para{Numerical analysis}
\label{sec:num_an_macro}
The numerical analysis of the transport part of the algorithm is standard, but the projection part is more delicate, and we are not able to provide a rigorous  error estimate for the stochastic scheme we propose.  %We shall restrict ourselves to the following considerations, which tend to 
 Let us simply say here that, beyond the unformal justification of the approach given previously,  the asymptotic behavior of the so-called Diffusion Limited Aggregation (DLA) process (see~\cite{LevPer}), and the link between the distance based on the energy norm between measures (see~\cite{GusSak} or~\cite{doob}) and the Wasserstein distance  give some arguments to support  the chosen strategy.

\section{From micro to macro}
\label{sec:micro_macro}
Both microscopic and macroscopic model express the same assumptions at their respective levels: a spontaneous velocity is given, and the system evolves according to a velocity which is the closest to the spontaneous one among all feasible velocities, in a least square sense. Yet, the macroscopic model was not built from the microscopic one by a rigorous homogenization process. We describe here some obstacles to such a process, to shed light on the deep differences between both settings, in spite of formal similarities. 

\bigskip
\para{Maximal density}
\\
First of all, let us point out that the notion of maximal density is somewhat ambiguous as far as the microscopic model is concerned. Let us consider the case of identical radii ({\rm monodisperse} situation). The maximal packing density for identical disks is known to be $\rho_{max} = \pi / 2\sqrt{3} \approx 0.9069\dots$, and corresponds to the triangular lattice (see Fig.~\ref{fig:triang}).
\begin{figure}[h!]
\begin{center}
\includegraphics[width=0.6\linewidth]{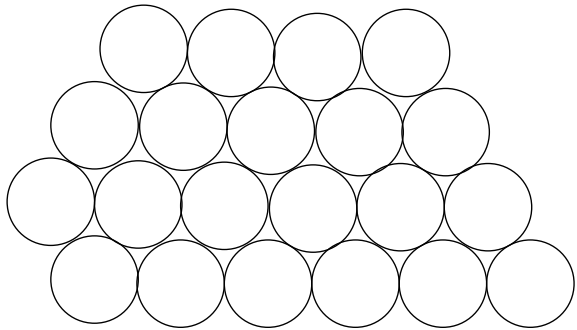}\\
\caption{Triangular lattice}
\label{fig:triang}
\end{center}
\end{figure}
Yet the actual density of moving collections of rigid disks is generally strictly less than this maximal value, which is only attained for this very particular configuration. As an example, in the evacuation situation represented in Fig.~\ref{fig:densites_micro}, the mean density upstream the exit door ranges typically between $0.85$ and $0.87$.
\begin{figure}[h!]
\begin{center}
 \begin{minipage}[c]{0.45\linewidth}
 \includegraphics[height=\linewidth]{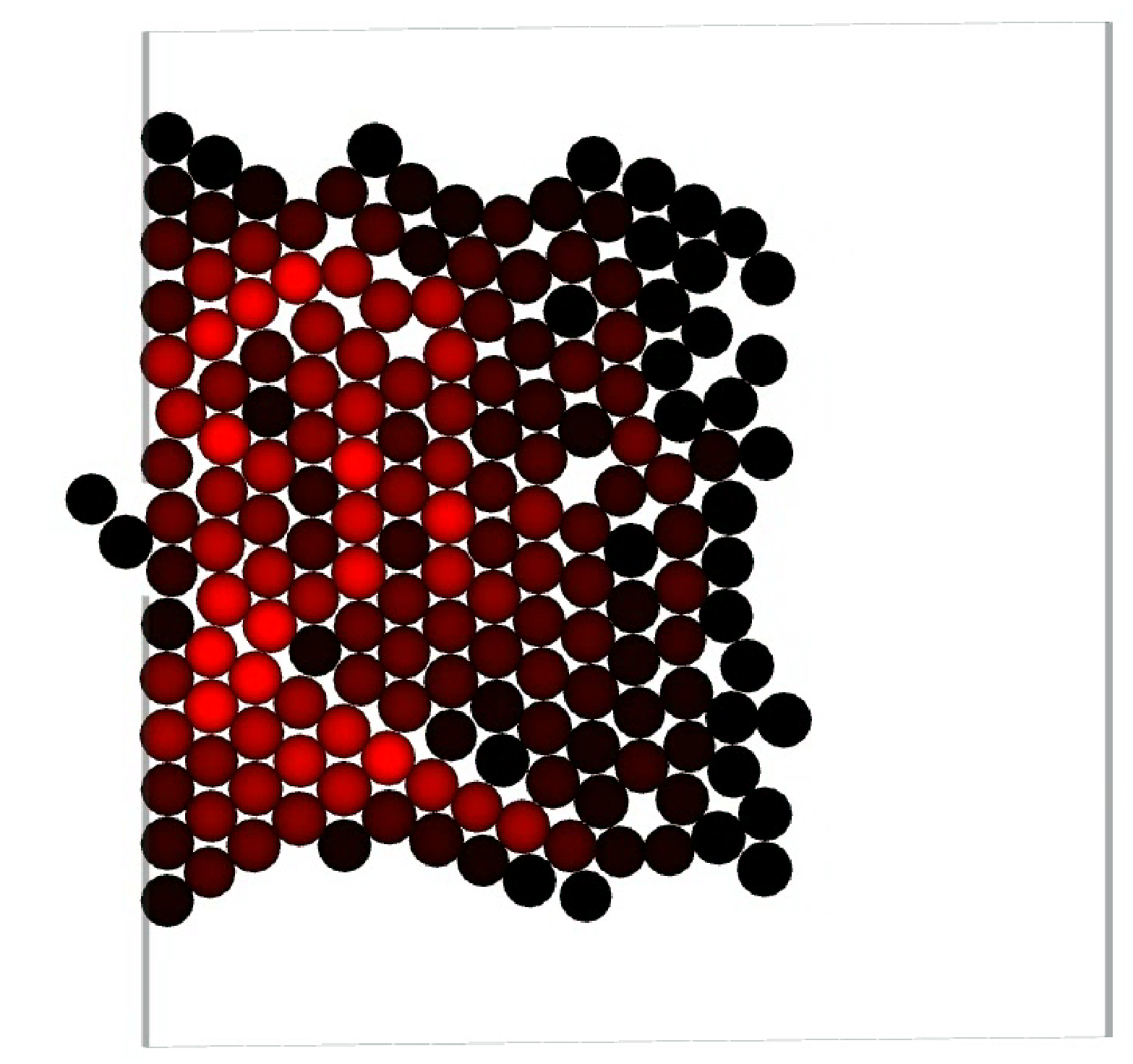}
 \end{minipage}
\hspace*{0.7cm} \begin{minipage}[c]{0.45\linewidth}
 \includegraphics[height=\linewidth]{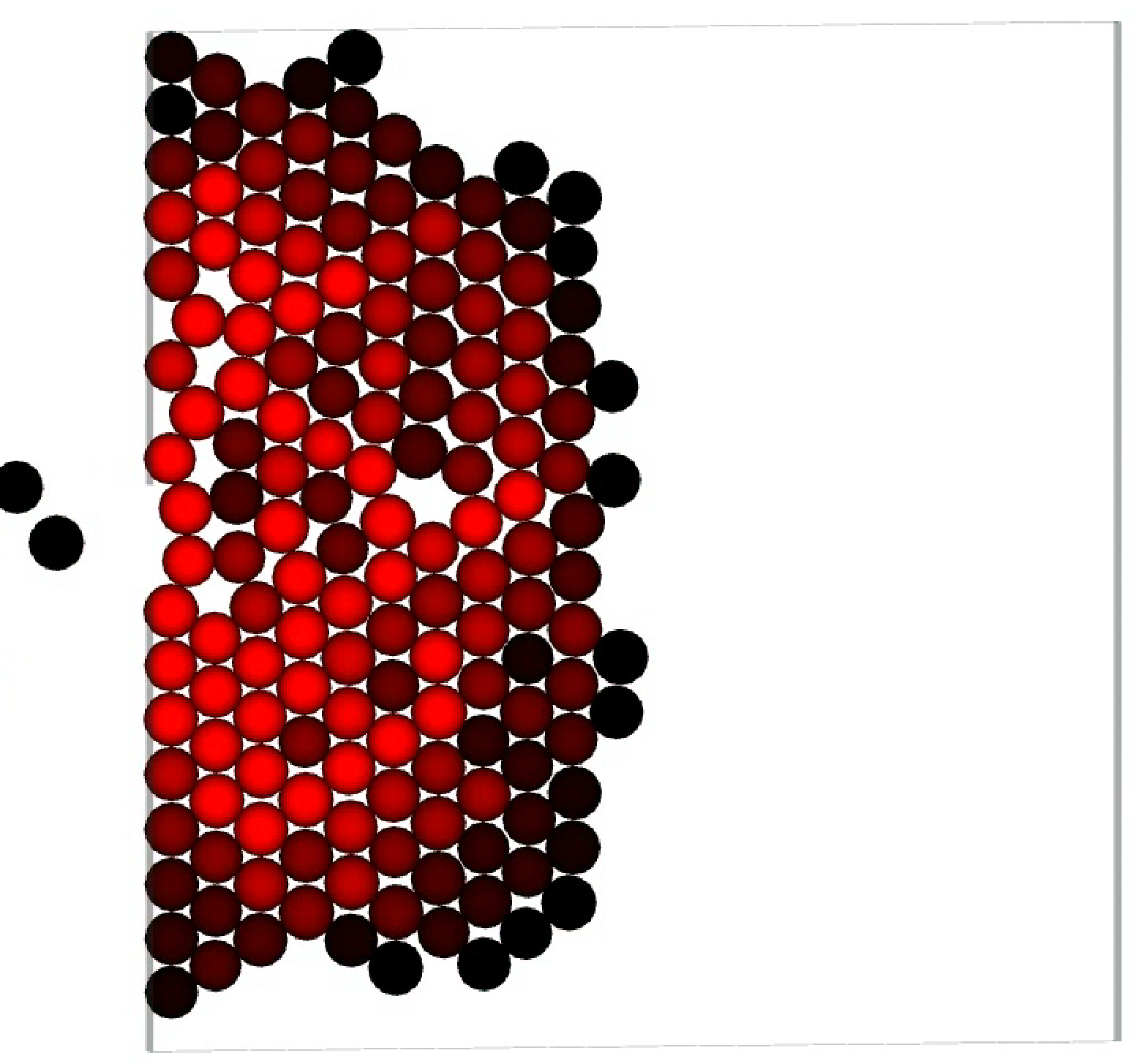}
 \end{minipage}
\caption{Maximal densities in the microscopic setting.}
\label{fig:densites_micro}
\end{center}
\end{figure}
The fact that the actual density is strictly less than the maximal one does not mean that the flow is unconstrained (as the macroscopic setting would suggest). Those considerations call for a clear identification of configurations which saturate the constraint. Such a notion is proposed for the three-dimensional situation  in~\cite{torquato}, in a Nash equilibrium spirit:

\begin{center}
{\em 
We say that a particle (or a set of contacting particles) is jammed if it cannot be translated while fixing the positions of all of the other particles in the system.}
\end{center}
Note that this property  is defined  as {\em local jamming} in~\cite{torquato}.
It is tempting to consider as maximal in some sense any density corresponding to such configurations, for which there are no free disks, so that constraints are activated everywhere. The triangular lattice is clearly jammed, but so is the cartesian lattice ($\rho = \pi / 4  \approx 0.79$), and it is possible to build looser jammed configuration (see Fig.~\ref{fig:triangloose}, with $\rho = \pi\sqrt{3} / 8 \approx 0.68$).
\begin{figure}[h!]
\begin{center}
\includegraphics[width=0.6\linewidth]{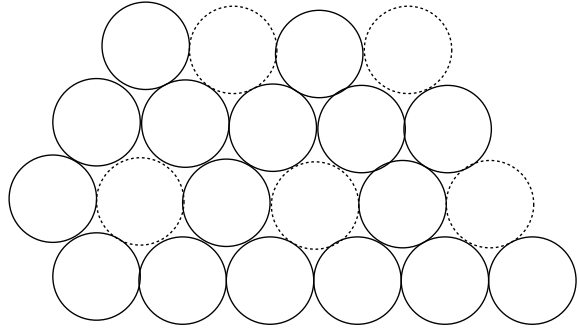}\\
\caption{Loose triangular lattice}
\label{fig:triangloose}
\end{center}
\end{figure}

\para{Constraints on the velocity}
Beyond this fuzzy definition of maximal packing, the set of feasible velocities in the microscopic setting is strongly dependent on the local structure and not only on the density, a feature which is lost in the macroscopic approach. 
Let us first consider the triangular lattice represented in Fig.~\ref{fig:triang}. Having the number of disks going to infinity and the radius going to zero, the density  (characteristic function of the solid phase) converges to the uniform density $\rho_{max}$.
Microscopic feasible velocities can be defined  in the solid phase, and one may wonder what are the corresponding feasible  velocities in the macroscopic limit. Such velocities are clearly constrained in  3 directions. Using self-evident notations $\ee_0$, $\ee_{\pi/3}$ and  $\ee_{2\pi/3}$ to design the principal directions of the lattice, one obtains at the limit some sort of unidirectional expansion  constraints in those directions, which can be written
$$
\ee\cdot \nabla \uu \cdot \ee\geq  0 \virg\hbox{  with } \ee = \ee_0\virg \ee_{\pi/3}\hbox{ or }\ee_{2\pi/3}.
$$
It still allows for non-rigid motions at the limit: Fig.~\ref{fig:triangmove} represents a feasible velocity field at the microscopic level, whose macroscopic counterpart is 
$$
\uu = \left ( \begin{array}{c} \sqrt{3}x \\  - y \end{array} \right ).
$$
Note that, although the microscopic velocity field is tangent to the boundary of $K$, the corresponding  macroscopic field is strictly expansive ($\nabla \cdot \uu > 0$).

\begin{figure}[h!]
\begin{center}
\includegraphics[width=0.6\linewidth]{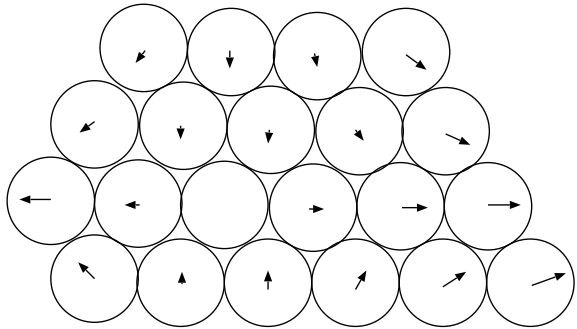}\\
\caption{Example of feasible velocity}
\label{fig:triangmove}
\end{center}
\end{figure}

\medskip

\para{Jams}
\\
%(XXX je ne sais pas trop comment appeler en anglais les ``blocages'').
The considerations which we presented above have a crucial consequence on the behavior of both approaches: the microscopic model has the ability to reproduce blocked jams (which are observed in practice), whereas the macroscopic one does not. 
The latter property is a consequence of the maximum principle. Let us
consider  the situation represented in Fig.~\ref{fig:jammac}, with a
saturated zone ($\rho\equiv 1$) upstream the exit door. The desired
velocity $\UU$ is assumed to point to the exit door, so that
$\nabla\cdot  \UU < 0$. The actual velocity is $\UU - \nabla p$, where $p$ solves a Poisson problem in the saturated zone, with homogeneous Neumann condition on the walls, and homogeneous  Dirichlet boundary conditions at the exit and on the interface with the non-saturated zone. By virtue of the maximum principle, $p$ is nonnegative over the saturated zone, so that the velocity correction $-\partial p/\partial n  $ is non-negative on the door: people exit {\em quicker} than they would if there were no congestion (they are always pushed out by other people behind them). As a consequence, if the desired velocity field tends to move people out of the room, the evacuation process will never stop before the room is empty.

\begin{figure}[h]
\psfrag{p0}[l]{$p=0$}
\psfrag{mDeltap}[l]{$-\Delta p=-\nabla \cdot \UU \geq 0 $}
\psfrag{dpdn}[l]{$\partial p /\partial n=0$}
\psfrag{Sat}[l]{Saturated zone}
\begin{center}
\resizebox{0.95\textwidth}{!}  {\includegraphics{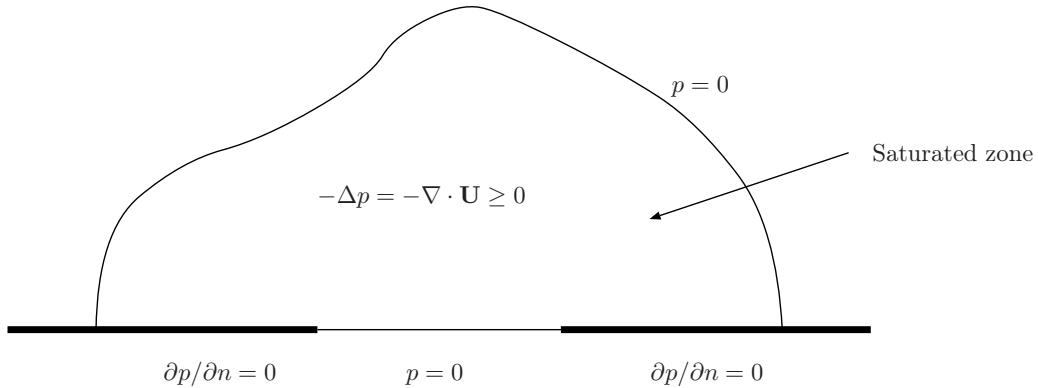}}
\caption{Macroscopic evacuation}
\label{fig:jammac}
\end{center}
\end{figure}

The microscopic situation is quite different. The pressures are still nonnegative by definition of the saddle-point formulation~(\ref{eq:spmic}), but they are likely to act in the ``wrong''  direction for the individuals which are the closest to the door, as soon as they form an arch (see Figure~\ref{fig:jammic}). 

\begin{figure}[h]
\begin{center}
\resizebox{0.8\textwidth}{!}  {\includegraphics{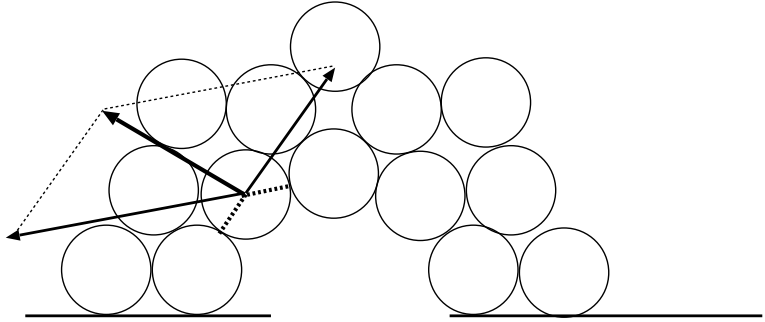}}
\caption{Microscopic evacuation (arches)}
\label{fig:jammic}
\end{center}
\end{figure}

\bigskip

\para{Mathematical issues (convergence or non-convergence)}
\\
The precise way for setting convergence questions when passing from the microscopic to the macroscopic model needs to associate a density to every microscopic configuration. An easy way to do that is to associate to every non-overlapping disks configuration the uniform measure (with unit density) on the union of those disks. In this way we obtain a density $\rho$ obviously satisfying $\rho\leq 1$ but we also know that, when we let the size of the disks go to zero (with the number of disks increasing consequently to infinity) we cannot obtain {\it any} density $\rho\leq 1$ in the macroscopic limit. In particular in dimension 2 it is impossible to go beyond $0.91$, since we know that the maximal density is realized by the triangular lattice. 

This suggests that it is better to rescale the approximating $\rho$ by the maximal density, which is $1$ in dimension one, $\pi/2\sqrt{3}$ in dimension two\dots 

Now the question is the following: take a sequence of initial data $\rho^0_N$, i.e. the densities associated (in the way we described above) to a sequence of microscopic configurations with $N$ non overlapping disks of radius $r$. Let $N\to\infty$ and $N\approx r^{-d}$ and consider the limit density $\rho^0$. Can we say that, for fixed velocity field $U$, the constrained evolution stemming in the microscopic model from $\rho^0_N$ converges to that obtained in the macroscopic one from $\rho^0$?

The answer can be expected to  be
%hold if
%It is easy to see that the answer is
 positive if $d=1$, but the situation is much more complicated for $d\geq 2$. Actually, the unidimensional case gives no ambiguity between jammed and maximal density configuration, which is not the case in higher dimension.

For instance one can consider a sequence of jammed cartesian lattices in a two-dimensional domain $\OO$ (a square, for instance), with a concentrating vector field $U$ (for instance $\UU(x)=-x$). In this situation, the configuration is completely blocked, and the evolution gives, for any time $t>0$, the constant density $\rho^0_N$. On the other hand, the limit as $N\to\infty$ is a constant density, but it does not activate the constraints. This is due to the fact that the density realized by the cartesian grid is strictly smaller than that of the triangular lattice, which was taken as a reference. Hence, in the evolution, $\rho^t$ would differ from $\rho^0$ and move towards a more concentrated configuration.

But this (jammed configuration which are not of maximal density) is not the only source of problems in the micro-macro limit. Actually, one can consider a similar example with a sequence of triangular grids. In such a case, both $\rho^0_N$ and the limit $\rho^0$ activate and saturate the constraints. This induces some constraints on the admissible velocities but, as we saw in the paragraph devoted to these constraints, they act differently in the microscopic and in the macroscopic models. In particular, the limits of the admissible velocities for the microscopic problem are those vector fields which are ``unilaterally incompressible'' in the three main direction of the lattice, 
which is not at all the same as imposing only a positive divergence. The actual limit constrained space at the macroscopic level necessitates obviously to account for the microscopic structure of the contact network. Beyond packing fraction,  different types of  {\em order metrics} have been introduced in~\cite{donev}  to give a better description of the local structure (see also~\cite{torquato} for a recent review on this matter), but the use of those concepts for evolution problem is still widely open.

This shows some deep differences, for $d\geq 2$, between the limit of the microscopic case and the macroscopic one. In the case of a gradient vector field $\UU$ we could express this issue, thanks to the gradient-flow formulation, in terms of the associated dissatisfaction functionals. Some general results give conditions to translate a variation form of convergence on these functionals (called $\Gamma-$convergence) into a convergence of the associated flows, but we do not want to insight this question in details because of its complexity; it would go out of the scope of this paper and it is moreover matter of an ongoing work.
 
%\subsection{Theoretical issues}
%First of all, let us point out that the Wasserstein setting can be seen as a natural extension of the Lagrangian description of microscopic populations. More precisely, for a given $r>0$, we denote as previously
%$$
%K = \set{ \qq \in \setR^{2N}\virg D_{ij} = \abs { \qq_j-\qq_i}  -2r \geq 0
%}.
%$$
%Now consider $\Psi$ which maps a  configuration onto its  associated density : 
%$$
%\micmac \; : \; \qq \in K \longmapsto  \rho = \sum_{i = 1}^N \ONE_{B(\qq_i,r)}.
%$$
%We assume $N \pi r^2 =1$ so that $\Psi$ maps $K$ onto $ \PP(\OO)$.
%Now consider $\qq$ in $K$,  which is considered as a metric space for the euclidean distance induced from  $\setR^{2N}$, 
%and $B(\qq,\eps) \subset K$. For $\eps $ sufficiently small, $\micmac$ maps $B(\qq,\eps)$ onto a subset of $\PP$ isometrically.
%
%
%

\section{Possible extensions}
\label{sec:ext}

\subsection{Strategies}

We specify here how the model we presented can be improved in order to account for more elaborate individual behaviors than the purely reptilian one on which we built our approach.\\

\para{Microscopic setting}
We aim here at integrating the fact that individuals are likely to   elaborate complex strategies to escape a building. For example, in
case of congestion, they may decelerate or try to avoid the jam, instead of
keeping pushing inefficiently. The velocity
 of a person becomes then dependent upon the position of people he can see in front
 of him.
 Such a strategy can be defined as follows:
We define the set $\textmd{N}_i$ (see Fig. \ref{fig:Ni}) containing persons
 who are near and visible to the 
 individual $i$: $$\textmd{N}_i=\left\{\  j,\ 
 |\qq_i-\qq_j|<2 r + \ell_{prox} , \  \dd_i \cdot \ee_{ij} \geq
 \cos \alpha  \right\} , $$ 
where $ \dd_i =   \UU_0(\qq_i)/|\UU_0(\qq_i)| $
and $ \ee_{ij} = (\qq_j-\qq_i) /|\qq_j-\qq_i|$. The constant  $\alpha$
is taken equal to the half angle of view ( i.e. $ \alpha  \simeq 60 ^\circ $). 
Two choices are possible for the individual $i$ if his neighbours
 (belonging to $\textmd{N}_i$) walk slower than him.

First,  he can
 decelerate instead of going through the crowd. In this case, his
 speed $s^n_i$ (i.e. his 
 velocity's norm)  at time $t^{n}$ is made dependent upon his neighbours'
 behavior at time $t^{n-1}$. More precisely, it is computed as a barycenter of his
 neighbours' speeds, weighted by their relative positions.

 %$$ s^n_i= \displaystyle \frac{\sum_{j \in  \textmd{N}_i} w_j
 %s^{n-1}_j} {\sum_{j \in  \textmd{N}_i} w_j} $$
%$$  \textmd{with }
% w_j=w^\theta_j\ w^d_j, \quad w^\theta_j= \displaystyle \frac{\dd_i \cdot \ee_{ij}- \cos \alpha}{1-\cos
 %  \alpha} \quad \textmd{and} \quad w^d_j= \displaystyle \frac{\ell_{prox} -D_{ij}}{\ell_{prox}-2r}. $$  
 Otherwise, he can also be in a hurry and prefer changing
 his way instead of slowing down. In that case, 
%if the way in front of individual $i$ is free, he will go straight
%ahead. 
if there exists another clear way through the set $\textmd{N}_i $,
 he follows it while keeping his desired speed, or else he will go round the
 group 
 $\textmd{N}_i$. Among all directions allowing him to do so, he
 chooses $\dd_i^{new} $ the 
 closest to the one he wanted (see
 Fig. \ref{fig:bypass}).
 
\begin{figure}
\begin{center}
\subfigure[Illustration of the set $\textmd{N}_i$.]{
\psfrag{v}[c]{ $\dd_i$}
\psfrag{a}[c]{$\alpha$}
\psfrag{d}[c]{$\ell_{prox}$}
\includegraphics[width=0.45\textwidth]{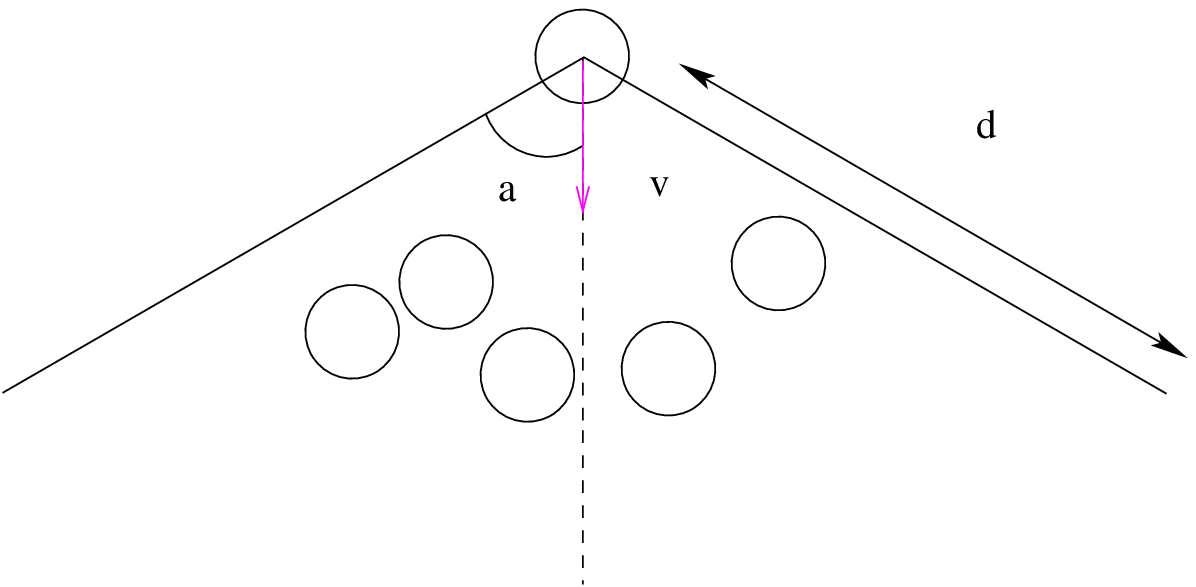}
\label{fig:Ni}}
\subfigure[Bypass strategy.]{
  \psfrag{o}[c]{\small $\dd_i$}
  \psfrag{n}[c]{\small$\dd_i^{new}$}
  \psfrag{ql}[c]{$\qq_{j^l}$}
  \psfrag{qr}[c]{$\qq_{j^r}$}
 \psfrag{e}[c]{$\ee_{ij^l}$}
 \psfrag{ep}[c]{$\ee_{ij^l}^\perp$}
 \psfrag{f}[c]{$\ee_{ij^r}$}
 \psfrag{fp}[c]{$-\ee_{ij^r}^\perp$}
  \includegraphics[width=0.45\textwidth]{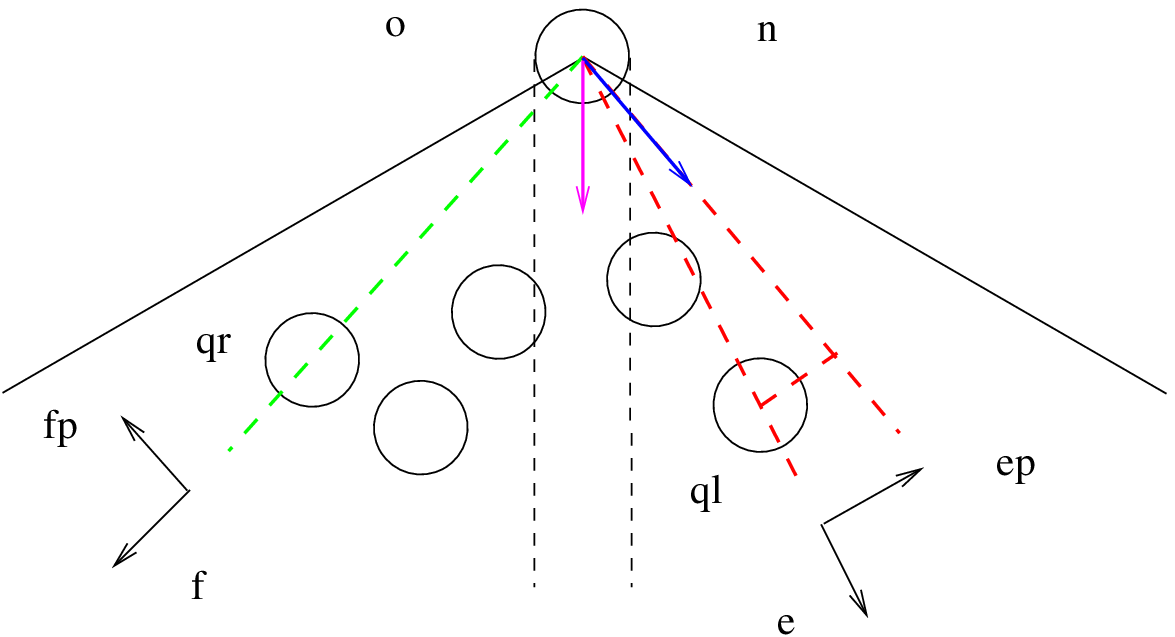}
\label{fig:bypass}}
\caption{Integrating strategies.}
\end{center}
\end{figure}

Numerical simulations with these strategies are presented  in~\cite{Pedjv}. If every pedestrian prefers decelerating, there is typically no
contact between people and rarefaction
waves are observed. On the contrary, the avoiding strategy  leads to situation where the constraints are activated.
%do not give in general admissible spontaneous velocities since contacts appear if several hurried people come across themselves. 

% $$\dd_i^{new}= \displaystyle \frac{(D_{ij_0} + 2r)\ee_{ij_0} \pm 2r\ee_{ij_0}^\perp}{|(D_{ij_0} + 2r)\ee_{ij_0}
%    \pm 2r\ee_{ij_0}^\perp|} \quad  \textmd{with}\quad  j_0=
%    \hbox{argmax}(\dd_i \cdot \ee_{ij^l},\dd_i \cdot \ee_{ij^r}),  $$
% where $\qq_{j^r} $ ($\qq_{j^l} $) represents the extremal right (left) neighbour of
%    individual i (see Fig. \ref{fig:bypass}).  

   \bigskip
   
\para{Macroscopic social forces}
Integrating  strategies in the macroscopic setting is  more delicate, as  it is less natural to follow  the motion of a single individual in the crowd. 
%one cannot follow the evolution of one individual beyond the crowd. 
%
 However, it is possible to model the influence of  local density on the behavior of the crowd by changing the desired velocity. More precisely, it is natural to assume that when people arrive upstream a crowded area, they tend to decrease their desired velocity. 
 In the spirit of~\cite{Hug1,markow}, 
a two dimensional generalization of road traffic models can be integrated in the model: at time $t$, people located at $x \in \OO$ have a desired velocity $\UU_\rho(x)$ with the same direction as the initial desired velocity $\UU(x)$ and a norm that decreases when $\rho$ increases:
\begin{equation}
\UU_\rho(x) = \alpha(\rho) \UU(x),
\end{equation}
where $\alpha(\rho)$ goes monotonically from $1$ to $0$ as $\rho$ goes from $0$ to $1$ (saturation value). 
%The decreasing factor $\alpha(\rho)$ is given by a kind of fundamental diagram. 
The evolution of the crowd then obeys the same principles as before, namely:
\begin{equation}
\label{eq:strat_macro}
\left\{ \begin{array}{rcl}
\partial_t \rho + \nabla \cdot (\rho \uu) & = & 0\\
\uu & = & P_{C_\rho} \UU_\rho.
\end{array} \right.
\end{equation}
%If $\alpha$ satisfies good properties, in particular if $\alpha(1)=0$, 
It can be proved that the density transported by $\UU_\rho$ 
%satisfies a maximum principle, and therefore 
stays in $K$: the projection step in the evolution equation becomes useless. Yet, if $\alpha(1) >0$ (people continue to push even at saturation density), the congestion constraint is likely to be activated, and the approach  we propose in this paper can be adapted to this situation. Note that, from a modeling   standpoint, it is natural to use {\em downstream} information to determine the desired velocity (people adapt their velocity according to what they see, i.e.\ to the density in front of them, which is downstream their desired direction), whereas, once the velocity is determined,  the transport can be  performed using an standard upwind  scheme.

%The numerical resolution of \eqref{eq:strat_macro} raises several questions, among which the discretization of the new desired velocity. We chose to use a scheme in the spirit of upwind schemes in transport equations: the new velocity at an interface of the mesh is given by $\mathbf{V} = \alpha(\rho^{dir}) \UU$, where $\rho^{dir}$ is the density of the cell toward which $\UU$ is directed.
% It is again possible to prove a maximum principle for the upwind scheme associated to the transport equation with this modified desired velocity.

\subsection{Multi-component populations}

We investigate here the possibility to account for different types of individuals, who might have different, and possibly antagonist, strategies.  In the microscopic setting, because of the Lagrangian character of the approach, this can be done without any change in the mathematical structure: it is sufficient to alleviate the assumption that the desired velocity depends on the position only, but can be defined differently depending of individuals. 
In the macroscopic setting, this is less straightforward, but it can  be formulated in the following way: consider two populations $1$ and $2$, with associated densities $\rho_1$ and $\rho_2$ and desired velocity fields $\UU_1$ and $\UU_2$, we consider that in the saturated zone $[\rho_1 + \rho_2 = 1]$, desired velocities are perturbed  by  a common correction velocity $\ww$ which ensures that the constraint $\rho_1 + \rho_2 \leq  1$ remains satisfied, and we define this velocity as the one that minimizes the $L^2$ norm.
The model can be  written
\begin{equation}
\left\{ \begin{array}{rcl}
\partial_t \rho_1 + \nabla \cdot (\rho_1 ( \uu_1 + \ww)) & = & 0\vseq\\
\partial_t \rho_2 + \nabla \cdot (\rho_2 ( \uu_2 + \ww)) & = & 0\vseq\\
\ww + \nabla p &=& 0 \vseq\\
\nabla \cdot \left ( \rho _1 \UU_1 + \rho_2 \UU_2 + \ww\right ) & \geq  & 0.
%\label{eq:strat_macro}
\end{array} \right.
\end{equation}
where $p$ vanishes in the non-saturated zone $[\rho_1 + \rho_2 < 1]$, and verifies the complementarity condition
$$
\int (\rho_1 \UU_1 + \rho_2 \UU_2) \cdot \nabla p = 0.
$$
Note that, in case $\UU_1$ and $\UU_2$ are both gradients of   dissatisfaction functions (one for each population), the system  presents a gradient-flow structure in the Wasserstein setting (see~\cite{DMMR} for a similar problem in the context of cell migration).

\subsection{Nash equilibrium approach}
%XXXX (Je l'ecris ici rapidement, on pourra le retirer si on n'a pas grand chose a en dire et si ca fait bizarre, c'est juste pour ne pas oublier. )
We give here some hints on another possibility to define the  actual velocity, in a Nash equilibrium spirit. Note that this approach is likely to change the mathematical nature of the evolution problem. In particular, the instantaneous  velocity is no longer defined in a unique way, but as any (among infinitely many) equilibrium in the following sense:
%
%Actual velocity may be defined alternatively by assuming an instantaneous Nash equilibrium principle. 
%More precisely,
The actual generalized velocity $\uu =(\uu_1,\uu_2,\dots,\uu_N)$ is such that, for any $i$, $\uu$ considered as a function of $\uu_i$ only minimizes the distance to $\UU$  among feasible velocities, i.e.
$$
\uu \in \argmin _{\vv \in C_\qq^i(\uu)} \frac 1 2 \abs {\vv- \UU_i } ^2
$$
where $C_\qq^i(\uu)$ is the set
$$
C_\qq^i(\uu) = \set { \vv \in \setR^2\virg (\uu_1,\dots,\uu_{i-1},\vv,\uu_{i+1},\dots,\uu_N) \in C_\qq}.
$$
As soon as two disks are in contact and push against each other, there are infinitely many solutions to this problem, as illustrated by the situation of Fig.~\ref{fig:mashmic}: the desired velocity of person $1$ is $1$ (in the horizontal direction), whereas person $2$ tends to stay still. 
In the previous approach ($\ell^2$ projection onto the cone of feasible velocity), the actual velocity is $1/2$ for both. In the present approach, any diagonal couple $(\alpha, \alpha)$, with $\alpha \in [0,1]$, realizes  an instantaneous   Nash  equilibrium.

\begin{figure}[h!]
  \psfrag{1}[c]{$1$}
  \psfrag{2}[c]{$2$}
  
\begin{center}
 \includegraphics[width=0.6\linewidth]{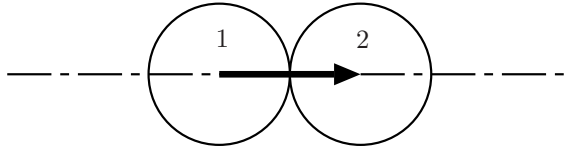}
\caption{Two disks, Nash approach.}
\label{fig:mashmic}
\end{center}
\end{figure}

\bigskip

In the macroscopic framework, the problem  can be described as follows:
For any $\omega \subset \setR^2$, one prescribes that 
$$
\uu_{|\omega} \in \argmin_{\vv \in C_\rho^\omega(\uu)} \frac 1 2  \left \| \vv_{|\omega} - \UU_{|\omega} \right \|_{L^2(\omega)}
$$
with
$$
C_\rho^\omega(\uu) = \set{ \vv \in L^2(\Omega)\virg \vv \oplus \uu_{|\setR^2 \setminus\overline\omega} \in C_\rho
}
$$
As indicated previously, this approach changes the mathematical nature of the evolution problem, which calls for further developments. Let us simply say that the solutions we built in the previous sections are particular solutions to this new problem, among infinitely many others.

\section{Numerical experiments}

\subsection*{Micro-macro comparison}

The first problem of the comparison between these models lies in the initial configuration: given an initial configuration for the disks in the microscopic setting, it is difficult to choose the macroscopic density that best fits this configuration. As long as uniform densities are considered, we adopt the following method: we first estimate the mean density in the initial microscopic configuration, and then normalize it with the maximal density of disks we encounter throughout the microscopic evolution. In Fig.~\ref{fig:init_max_density}, we present an example of such a computation.

\begin{figure}[h!]
\begin{center}
 \begin{minipage}[c]{0.4\linewidth}
 \includegraphics[width=\linewidth]{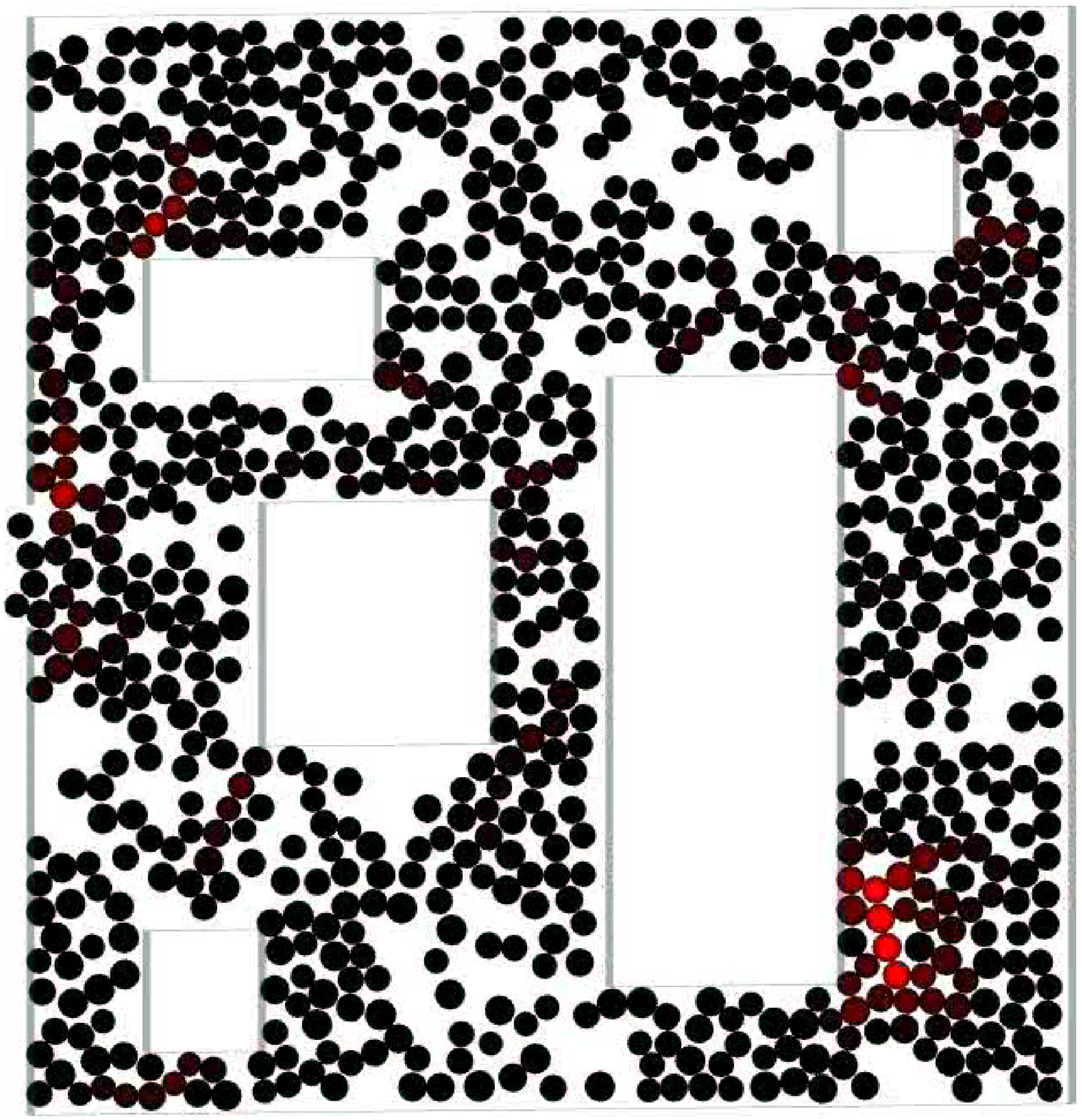}
 \end{minipage}
\hspace*{0.7cm} \begin{minipage}[c]{0.3\linewidth}
 \includegraphics[width=\linewidth]{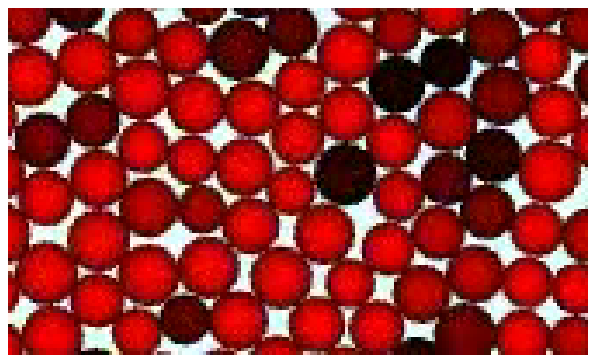}
 \end{minipage}
\caption{Calculation of the macroscopic density: the initial  density $0.53$ (on the left) is normalized by the maximal density encountered $0.81$ (on the right) to obtain a macroscopic density of $0.65$.}
\label{fig:init_max_density}
\end{center}
\end{figure}

We then let evolve both systems and compare the configurations at equivalent time steps. The formation of the saturated zones is quite the same in microscopic and macroscopic settings (see Fig.~\ref{fig:5obs_01}). However, as pointed out in Section~\ref{sec:micro_macro}, the behavior of the two models at the exit is very different, in particular the evacuation is faster in the macroscopic model (see Fig.~\ref{fig:5obs_15}).

\begin{figure}[h!]
\begin{center}
 \begin{minipage}[c]{0.45\linewidth}
 \includegraphics[height=\linewidth]{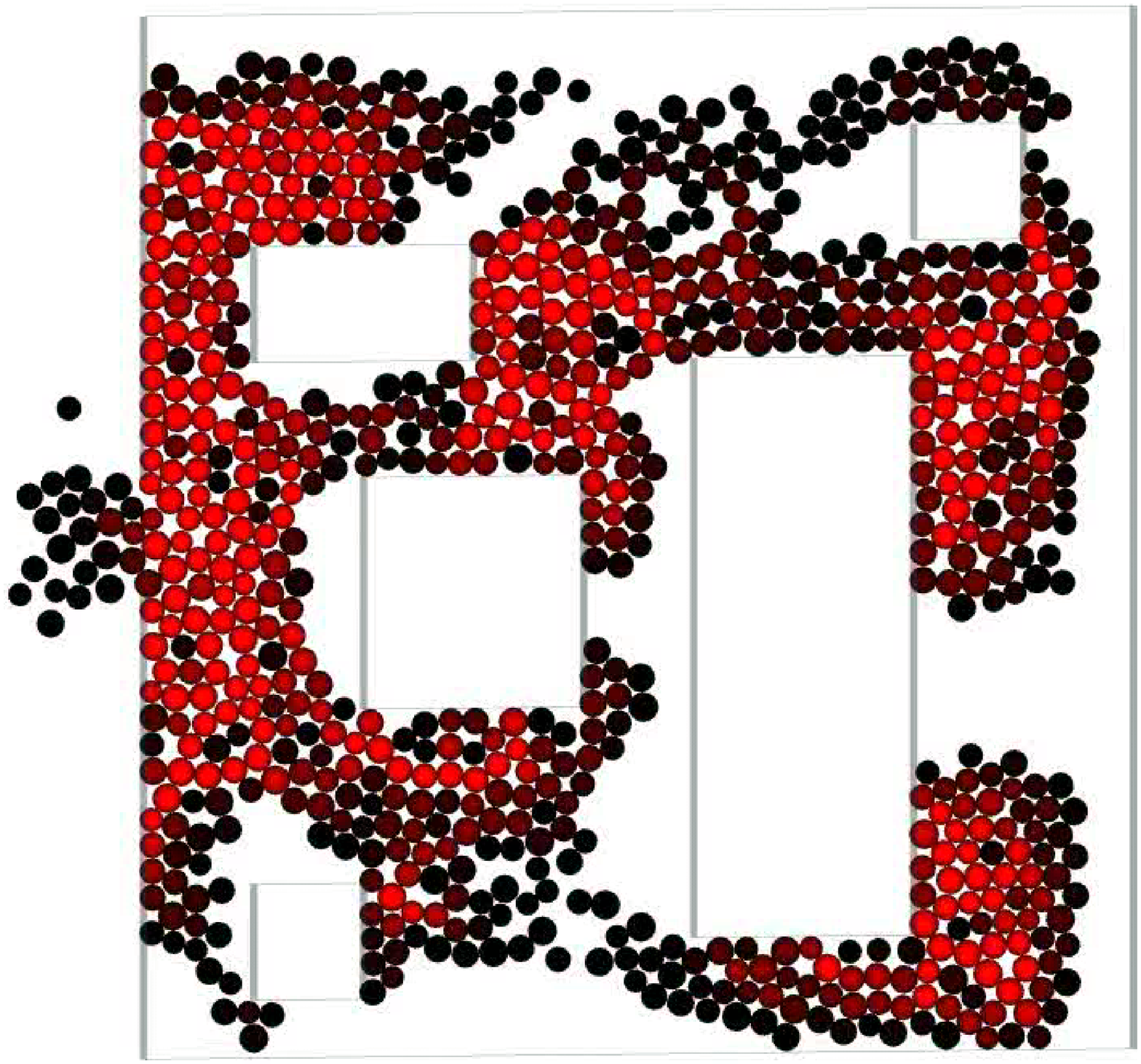}
 \end{minipage}
\hspace*{0.7cm} \begin{minipage}[c]{0.46\linewidth}
 \includegraphics[height=\linewidth]{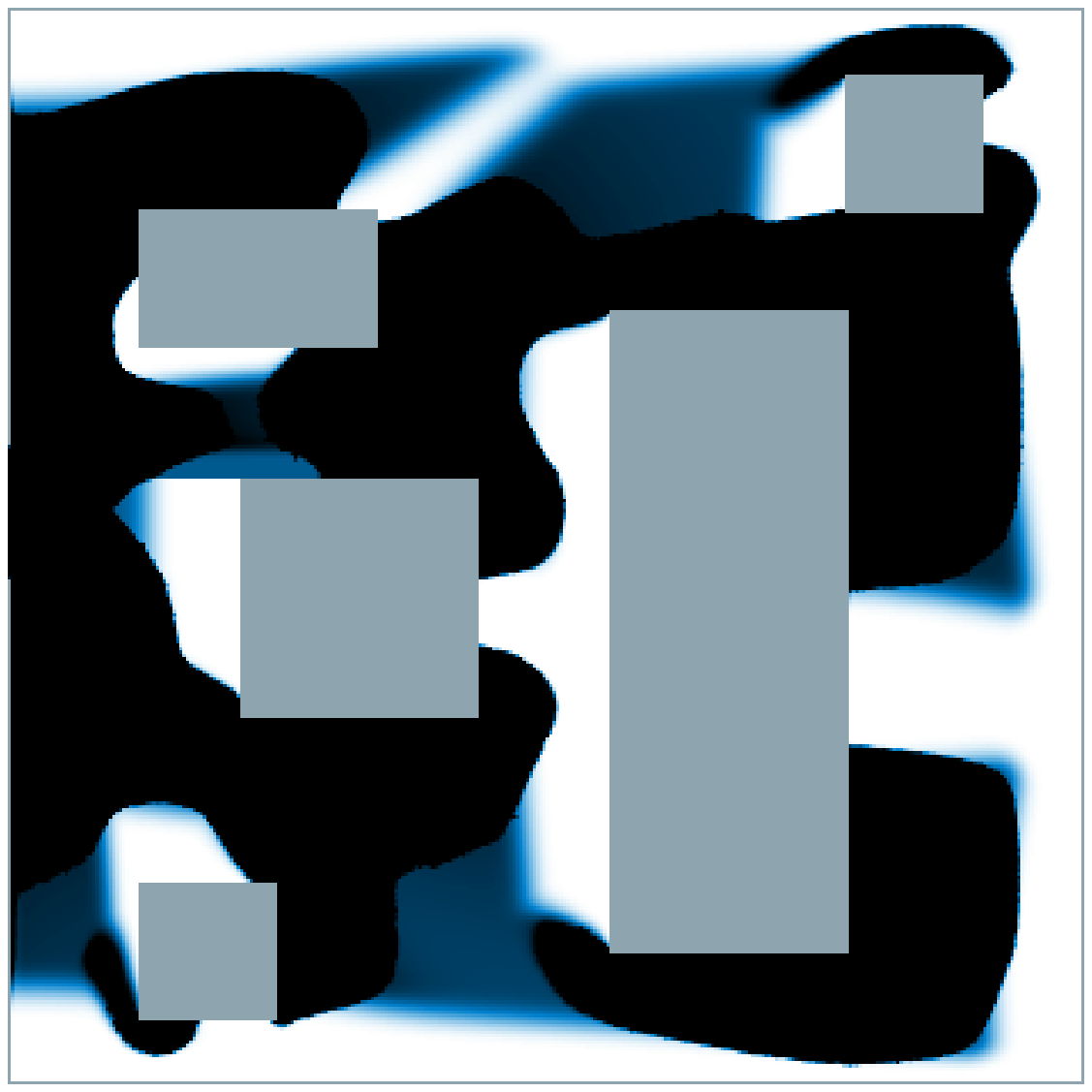}
 \end{minipage}
\caption{Formation of the saturated zones in the microscopic model (left) and the macroscopic one (right).}
\label{fig:5obs_01}
\end{center}
\end{figure}

\begin{figure}[h!]
\begin{center}
 \begin{minipage}[c]{0.45\linewidth}
 \includegraphics[height=\linewidth]{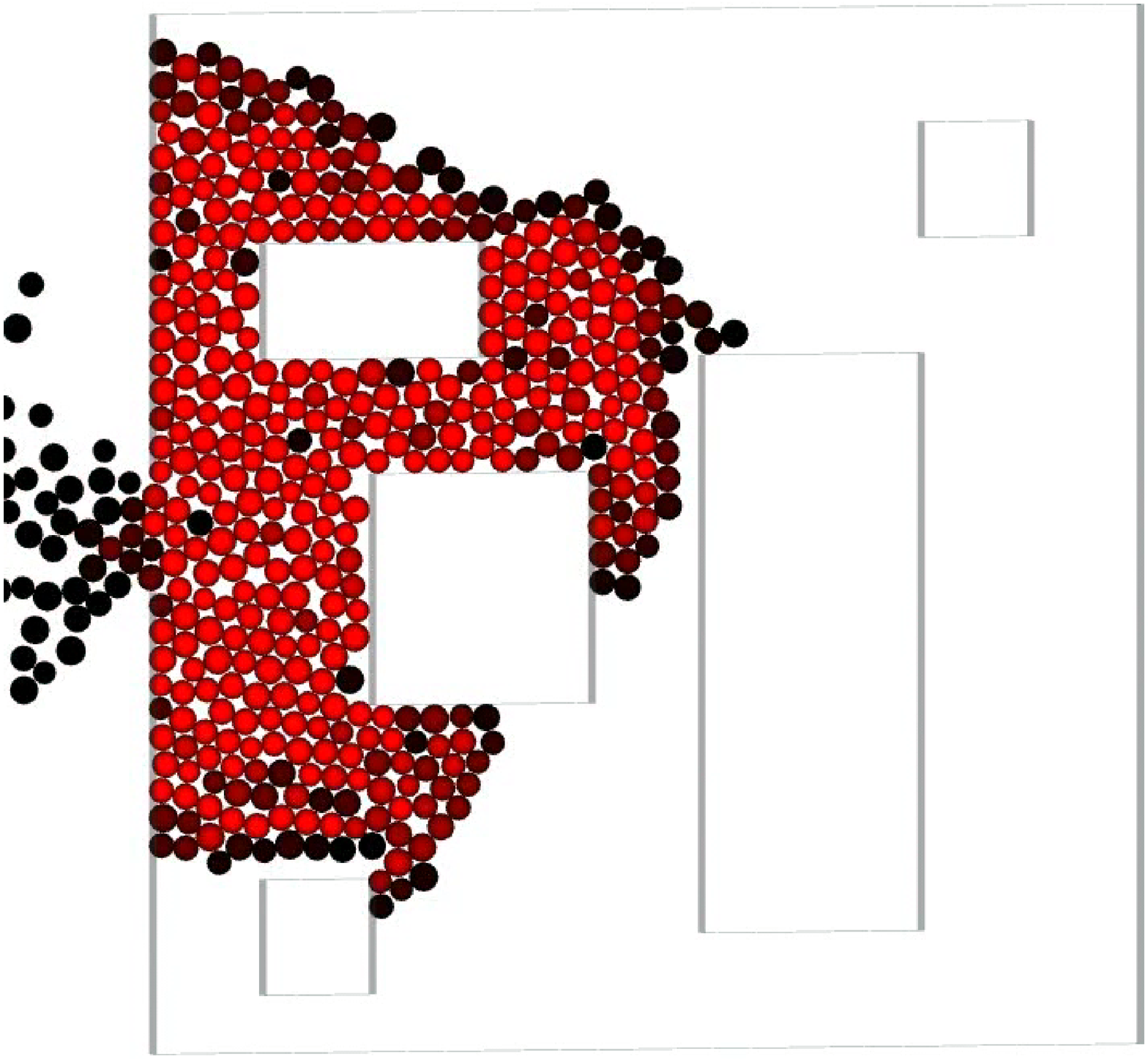}
 \end{minipage}
\hspace*{0.7cm} \begin{minipage}[c]{0.46\linewidth}
 \includegraphics[height=\linewidth]{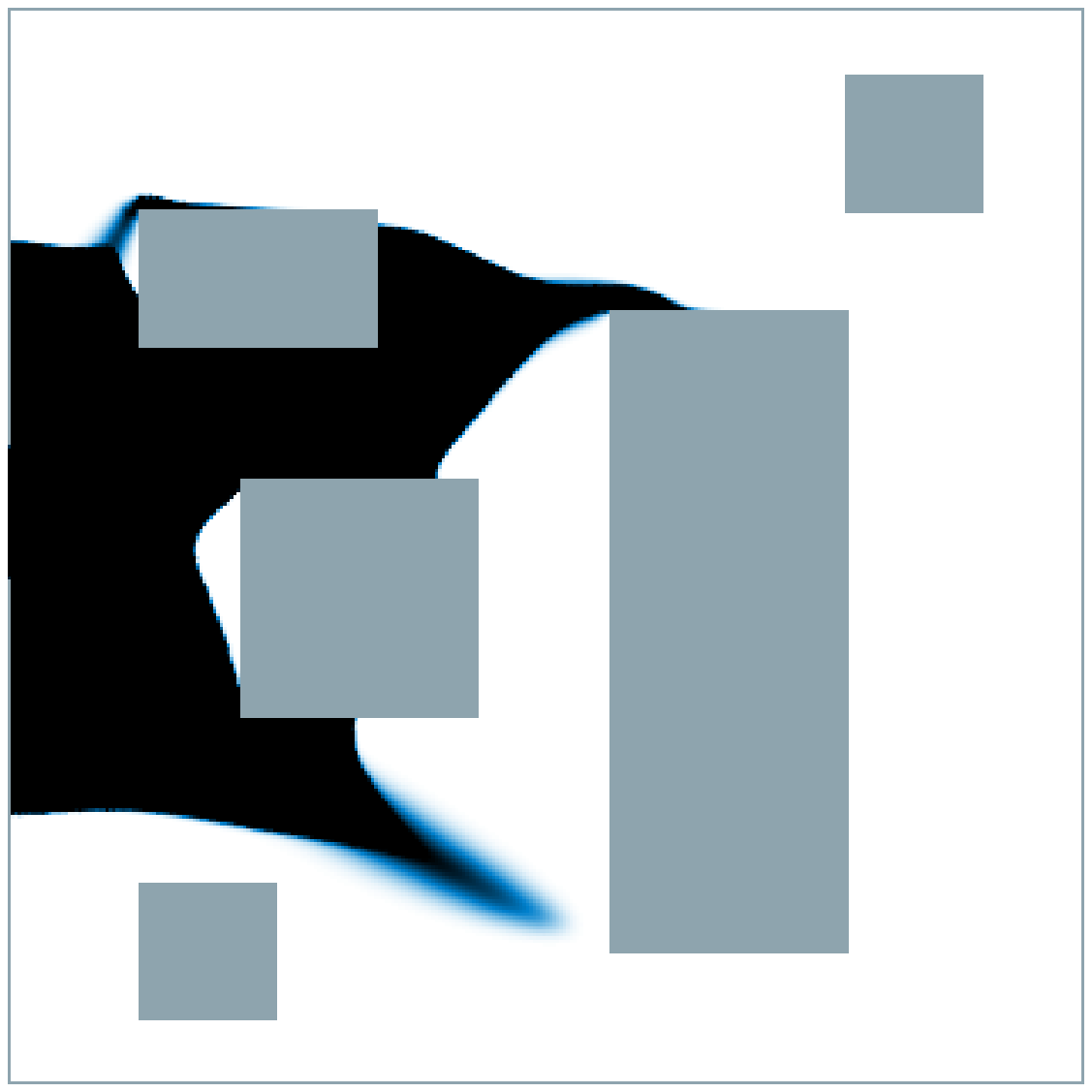}
 \end{minipage}
\caption{Comparison of the evacuation for both models.}
\label{fig:5obs_15}
\end{center}
\end{figure}

\subsection*{Pressure field}

In the microscopic model, the pressure exerted between people appears naturally as a Lagrange multiplier, and is computed using Usawa algorithm. The macroscopic pressure field does not appear explicitly in the numerical scheme, but it can be recovered by estimating during the projection step a discrete equivalent of the odometer function (see Section~\ref{sec:num_an_macro}): on each cell, we define the pressure as the total mass emitted by this cell during the stochastic projection. In Fig.~\ref{fig:pressions}, we represented the microscopic pressure and the isovalues of the macroscopic pressure field upstream an exit door. In both settings, the pressure field is maximal in the middle of the saturated zone, and decreases at the entrance and the exit of this zone.

\begin{figure}[h!]
\begin{center}
% \begin{minipage}[c]{0.89\linewidth}
 \includegraphics[height=0.85\linewidth]{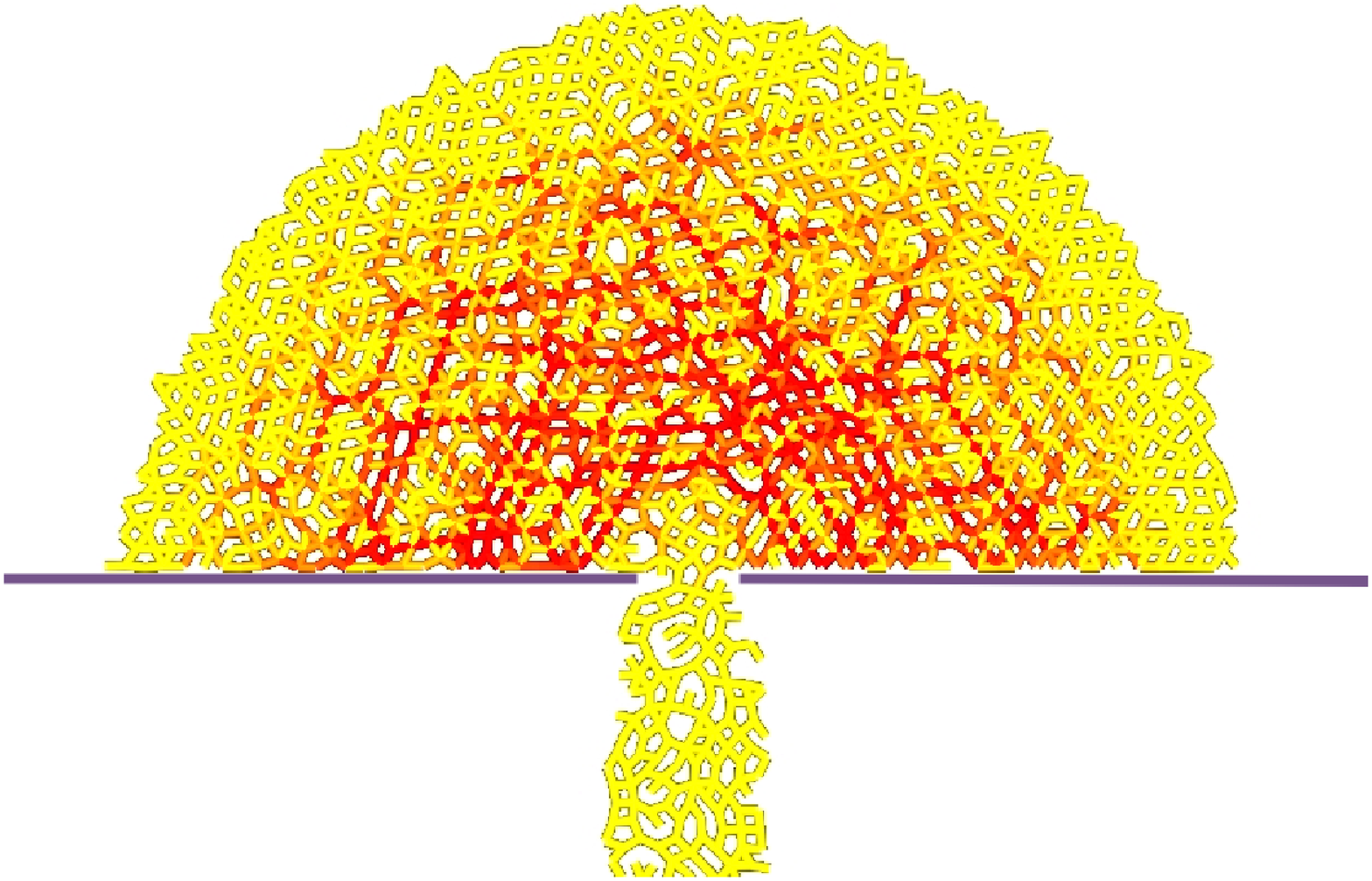}
% \end{minipage}
%\hspace*{0.5cm} 
%\begin{minipage}[c]{0.65\linewidth}
 \includegraphics[height=0.75\linewidth]{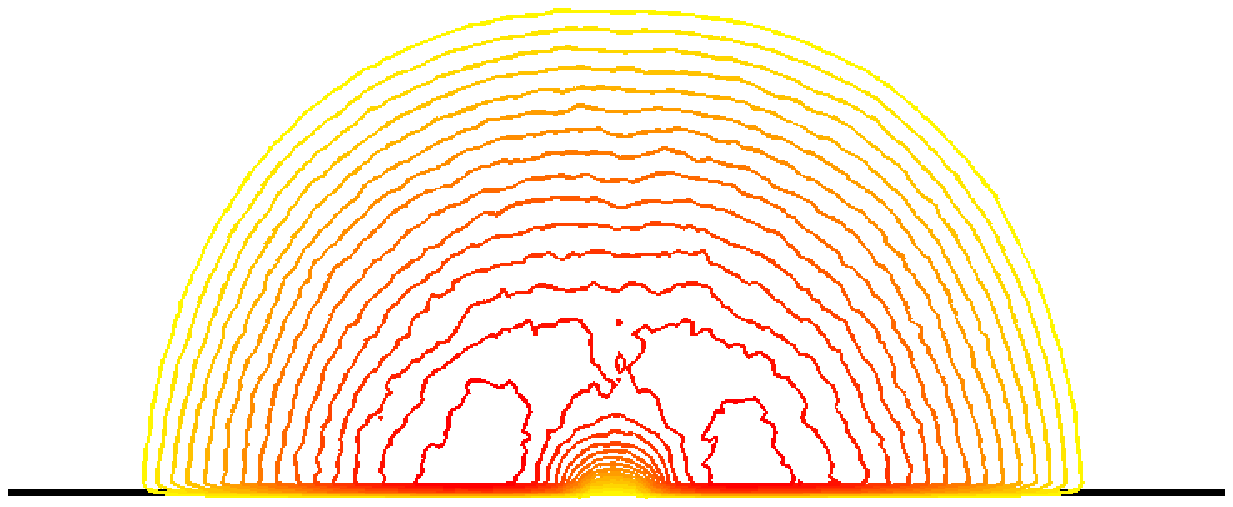}
% \end{minipage}
\caption{Pressure field in the microscopic and macroscopic models.}
\label{fig:pressions}
\end{center}
\end{figure}

\subsection*{Realistic geometries}
Let us finally present some examples which correspond to more realistic evacuations situations. The microscopic setting, as suggested in Section~\ref{sec:micro_macro}, describes more properly  situations where people leave through narrow exits. Fig.~\ref{fig:bat_maths} shows the evacuation of the first floor of a large  building (Departement of Mathematics at Orsay).

\begin{figure}[h!]
\begin{center}
 \includegraphics[width=\linewidth]{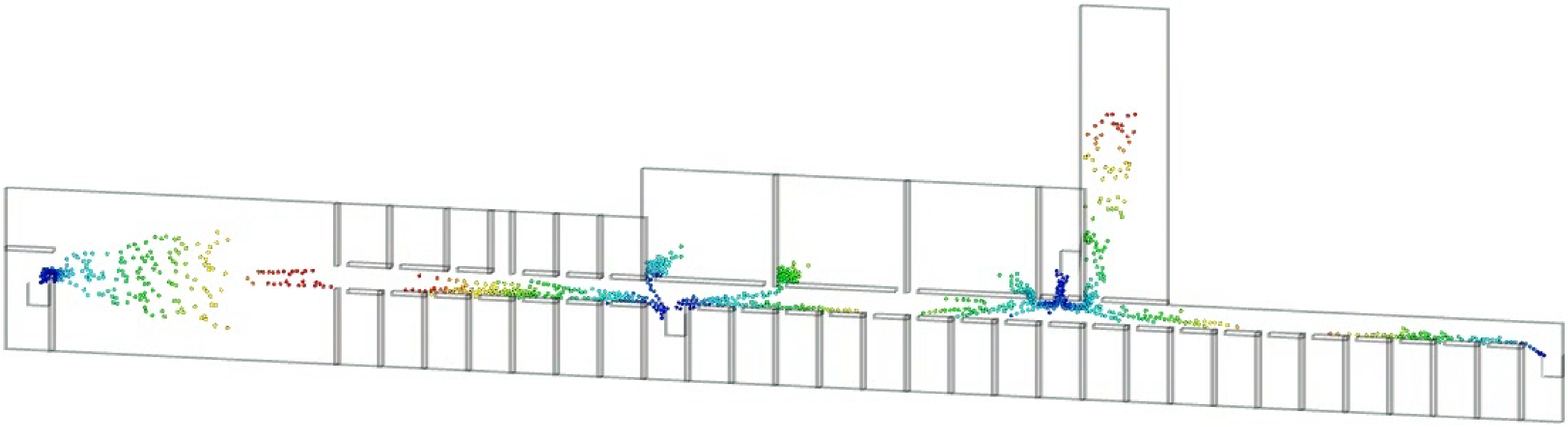}\\
 \begin{minipage}[c]{0.2\linewidth}
 \includegraphics[width=\linewidth]{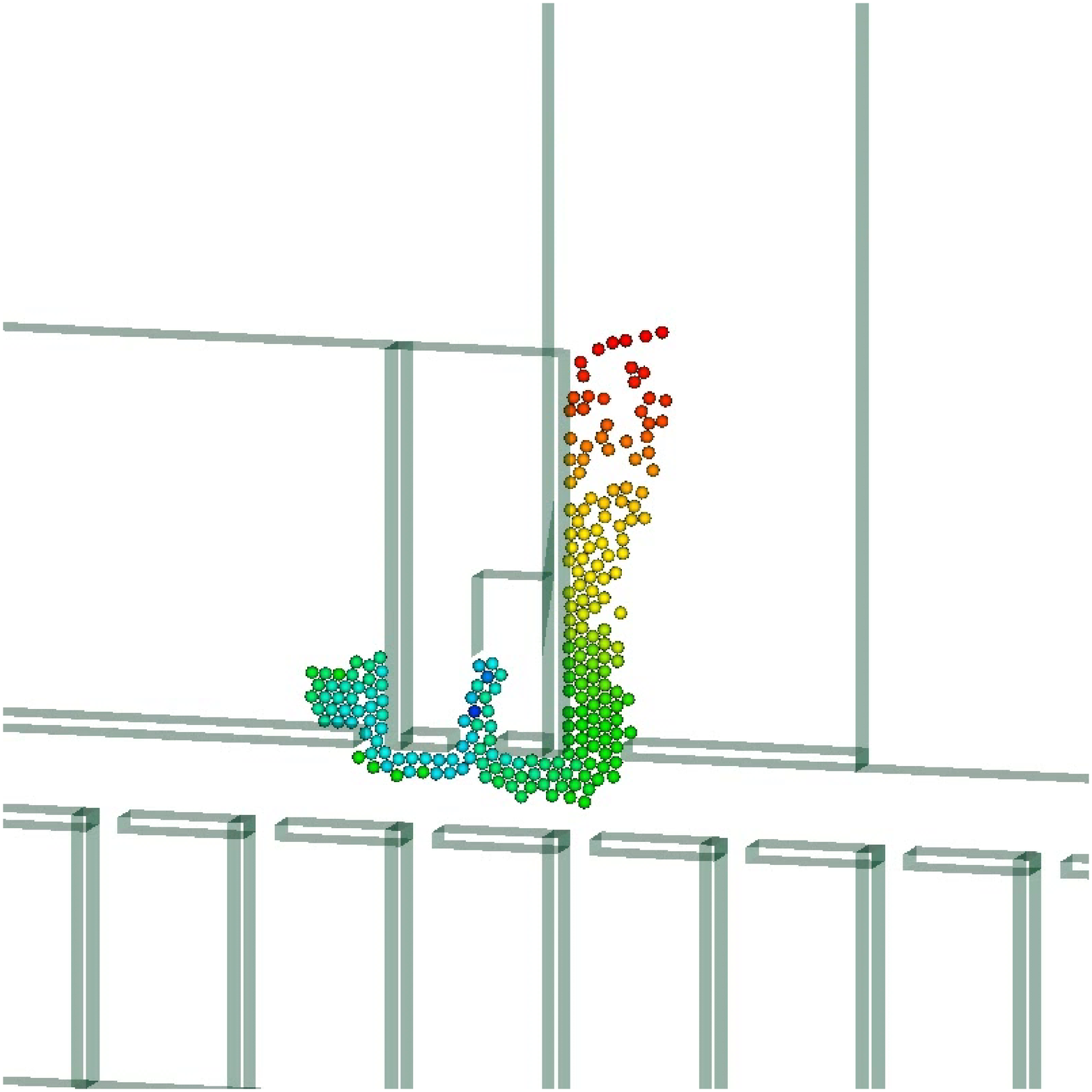}
 \end{minipage}
\hspace*{0.2cm} \begin{minipage}[c]{0.2\linewidth}
 \includegraphics[width=\linewidth]{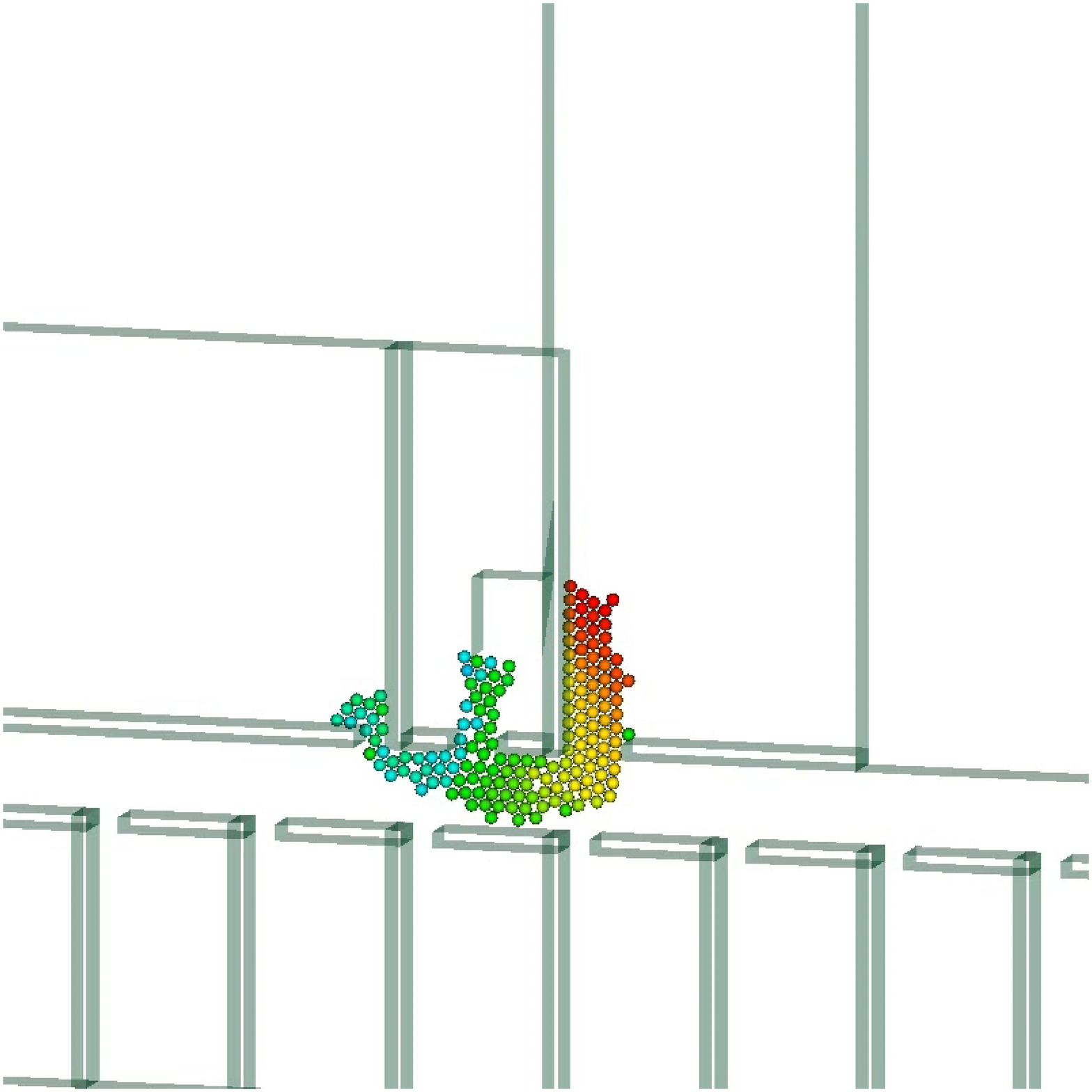}
 \end{minipage}
 \hspace*{0.2cm}  \begin{minipage}[c]{0.2\linewidth}
 \includegraphics[width=\linewidth]{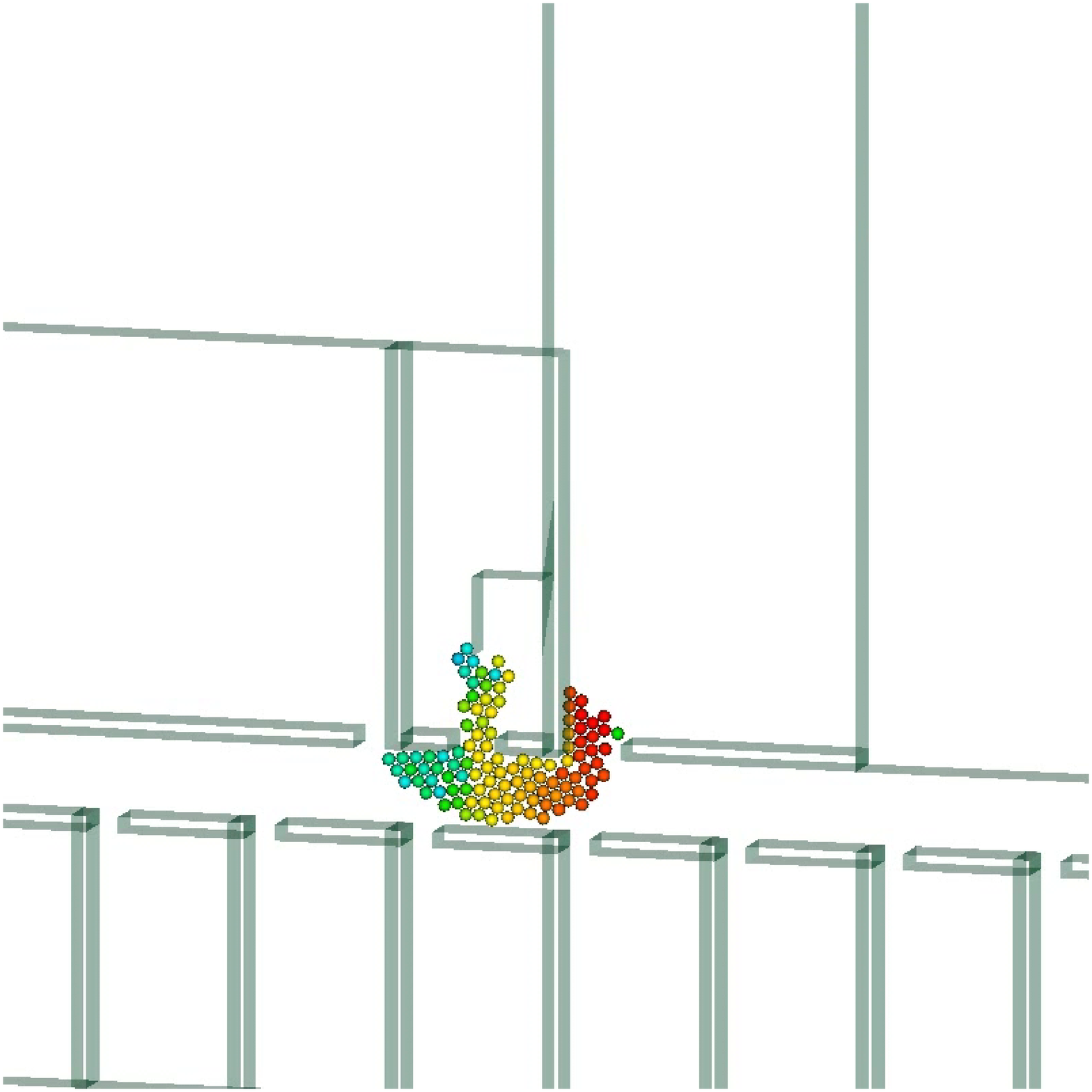}
 \end{minipage}
\hspace*{0.2cm} \begin{minipage}[c]{0.2\linewidth}
 \includegraphics[width=\linewidth]{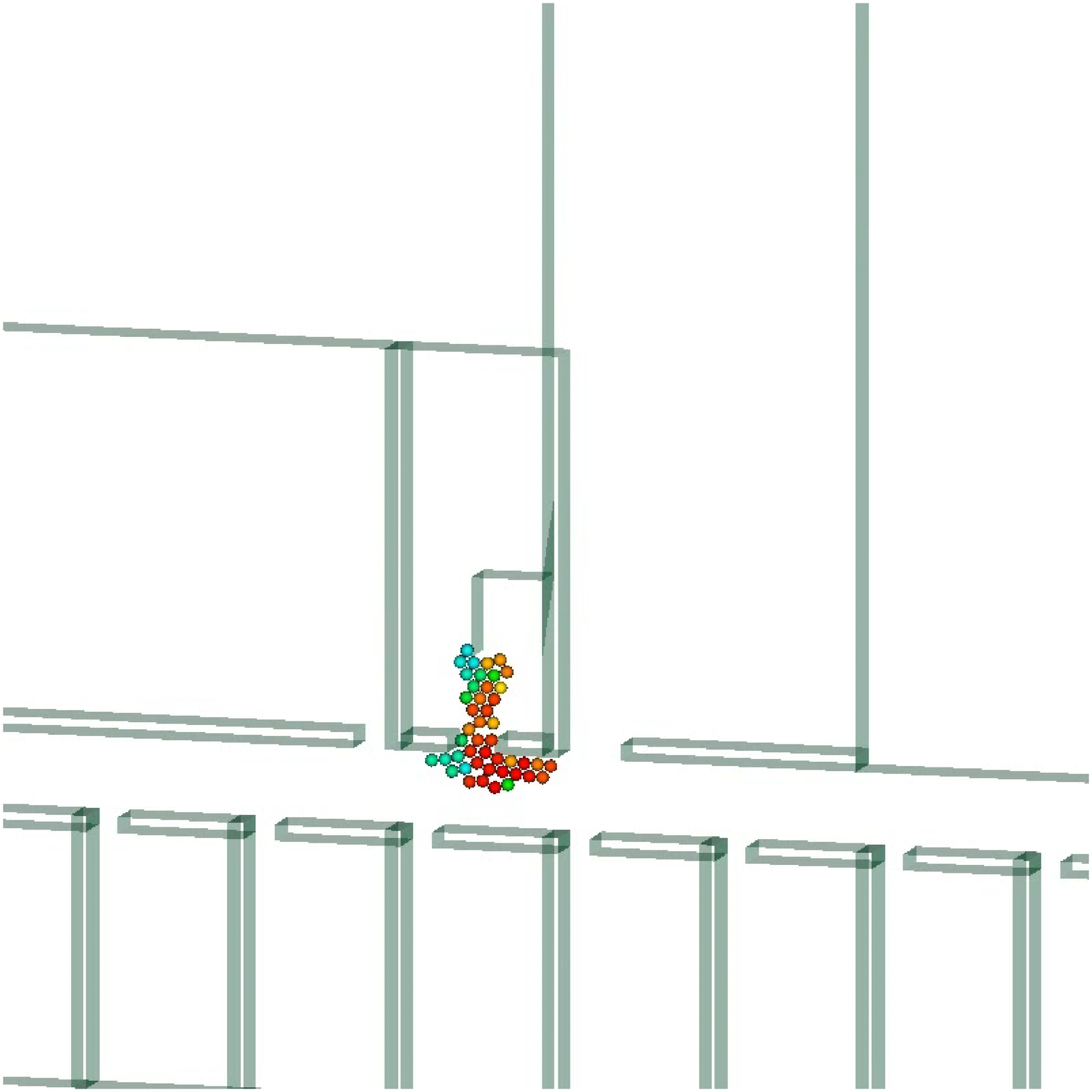}
 \end{minipage}
\caption{Evacuation of the first floor of the Maths building at  Orsay.}
\label{fig:bat_maths}
\end{center}
\end{figure}

The macroscopic setting, on the other hand, would best fit situations where many people have to evacuate a large domain. We present in Fig.~\ref{fig:stade} the evacuation of the Stade de France. Even if the benches of the stadium are quite narrow passages, the macroscopic model gives results that correspond to real evacuation configurations.

\begin{figure}[h!]
\begin{center}
 \includegraphics[width=0.9\linewidth]{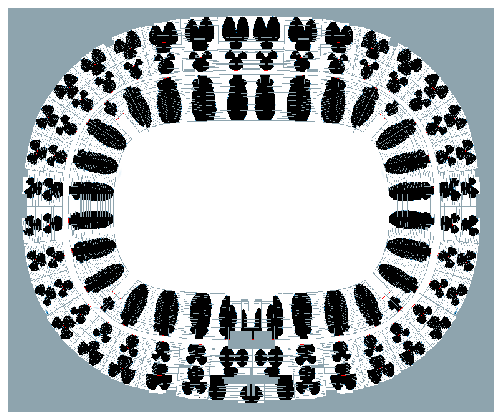}\\
 \begin{minipage}[c]{0.2\linewidth}
 \includegraphics[width=\linewidth]{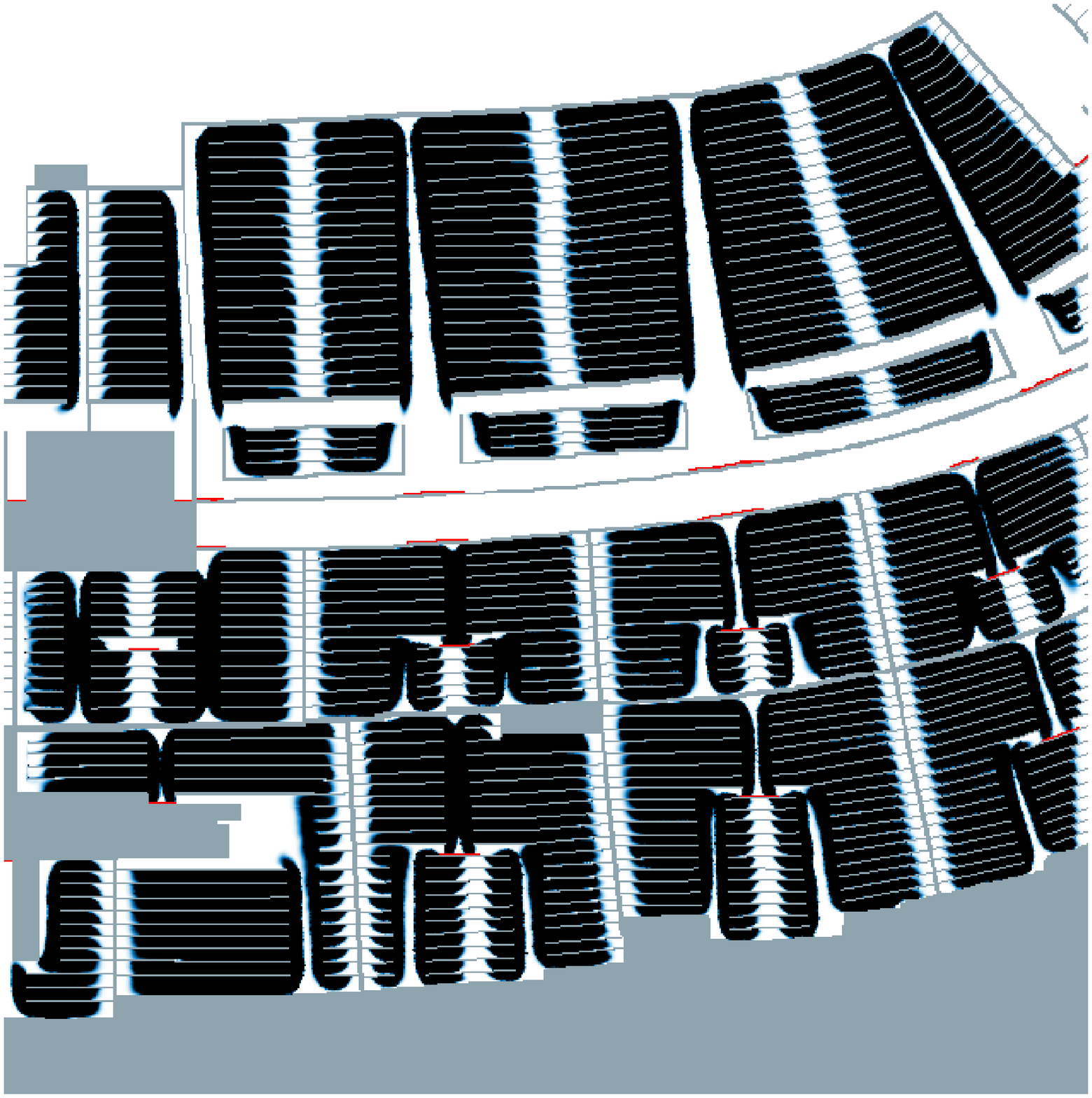}
 \end{minipage}
\hspace*{0.2cm} \begin{minipage}[c]{0.2\linewidth}
 \includegraphics[width=\linewidth]{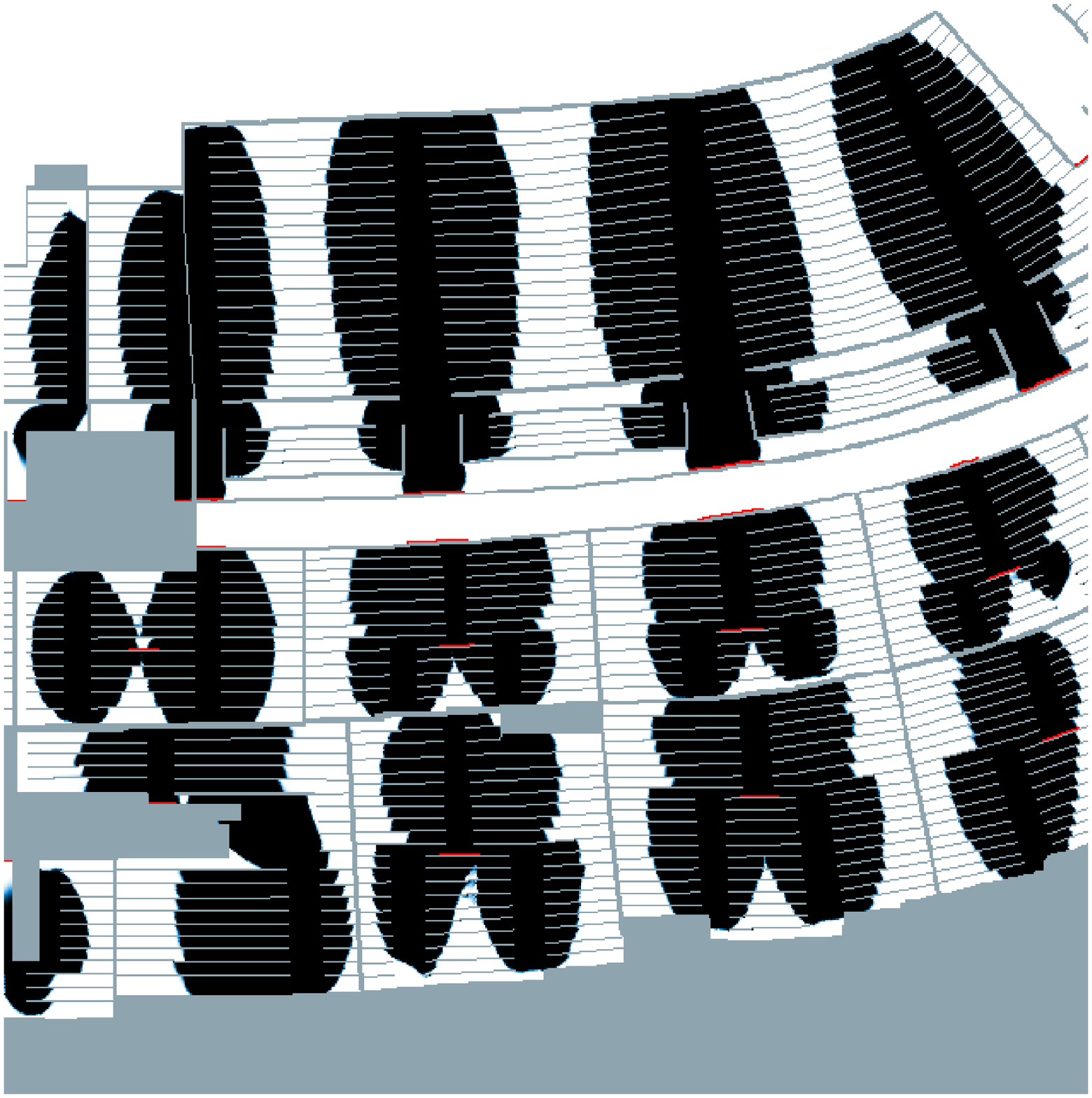}
 \end{minipage}
 \hspace*{0.2cm}  \begin{minipage}[c]{0.2\linewidth}
 \includegraphics[width=\linewidth]{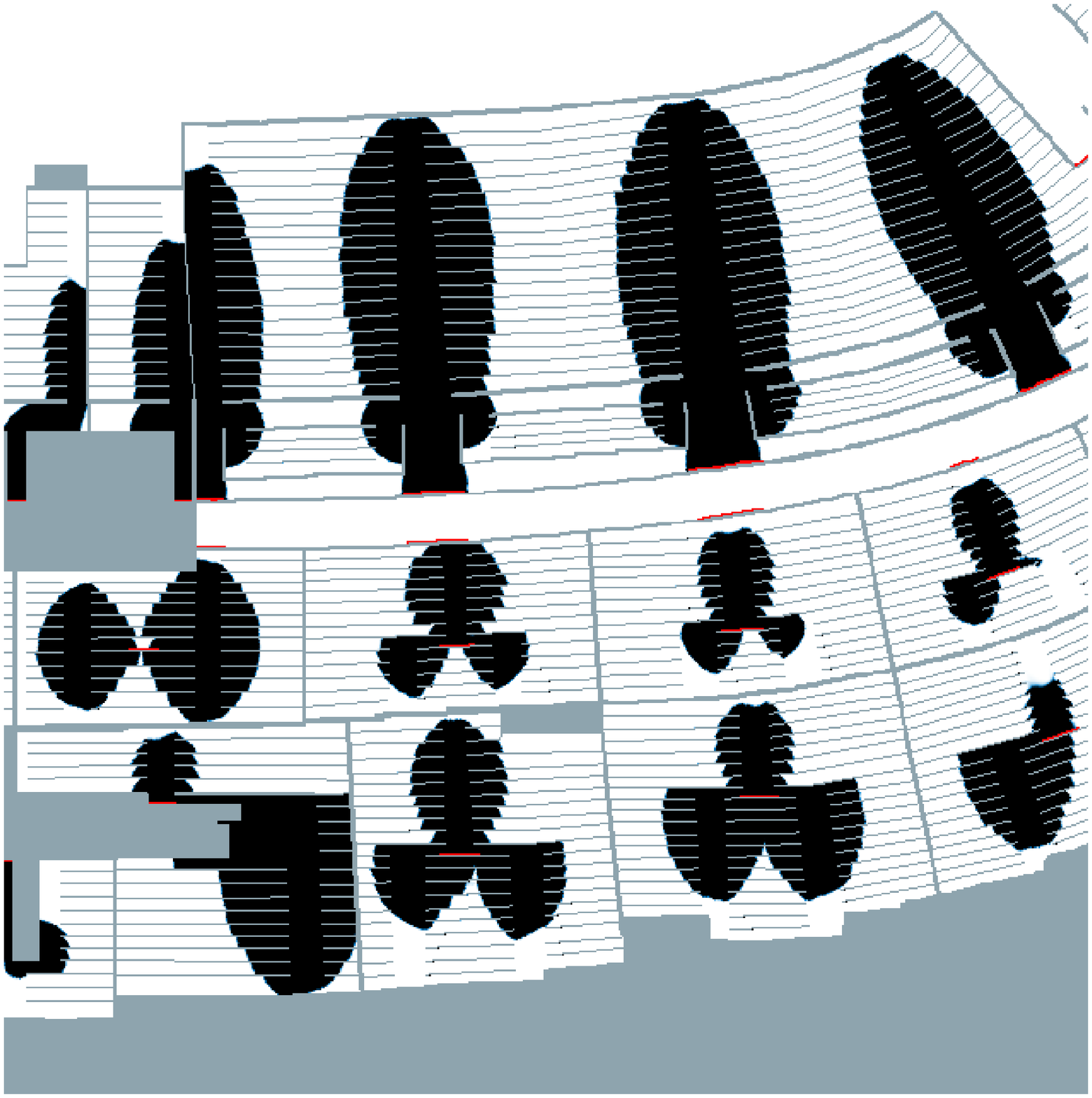}
 \end{minipage}
\hspace*{0.2cm} \begin{minipage}[c]{0.2\linewidth}
 \includegraphics[width=\linewidth]{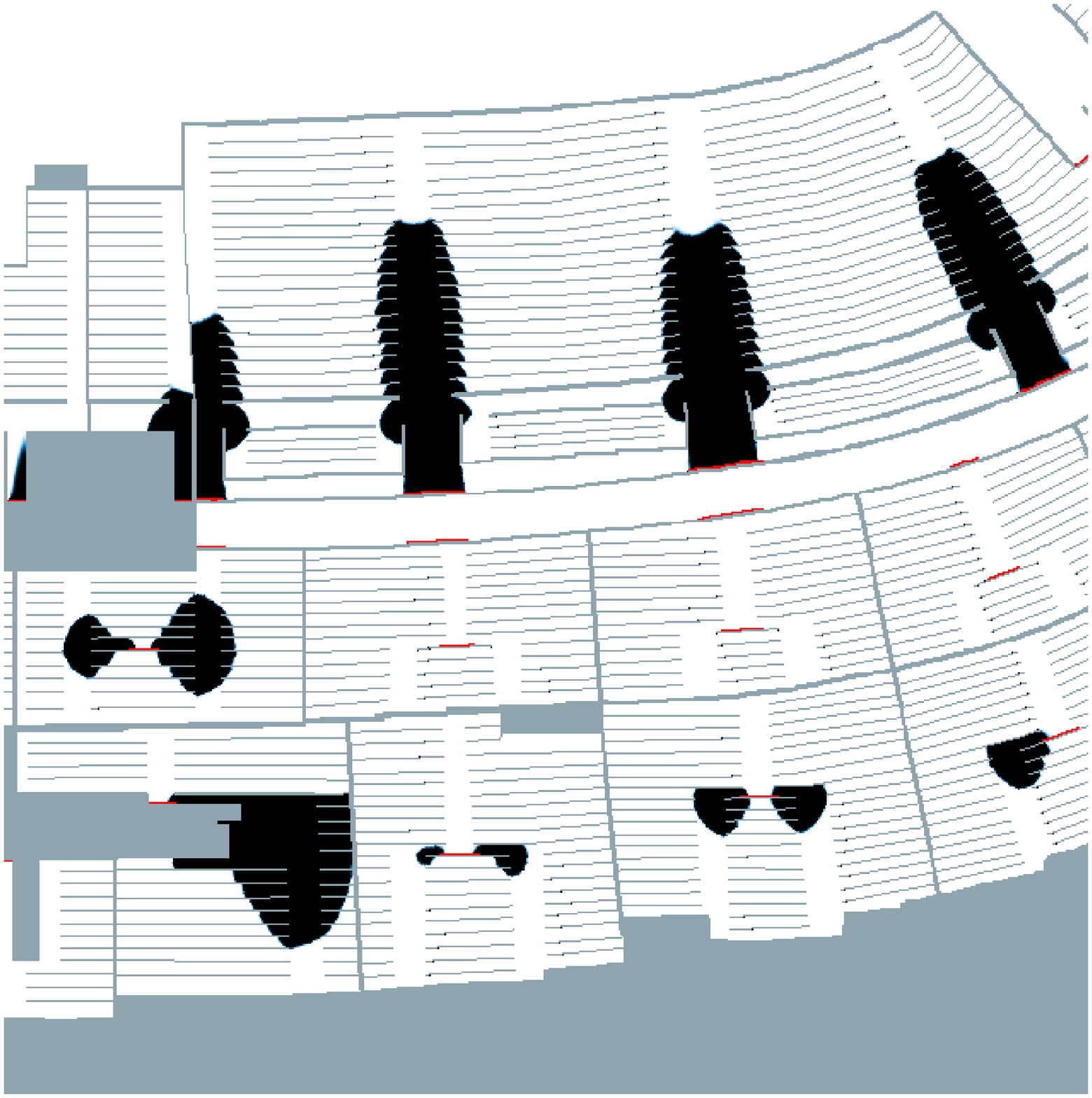}
 \end{minipage}
\caption{Evacuation of the Stade de France.}
\label{fig:stade}
\end{center}
\end{figure}

 \section*{Acknowledgements} 
 We would like to thank   
 E. Maurel-Segala  and L. Thibault
 for fruitful  suggestions and comments.
% for his help on stochastic algorithms.

 %\vspace{0,5cm}

%{\em Acknowledgements.}
%\thanks{The authors would like to thank A. Cohen, P. G\'erard and P. Pansu for their helpful suggestions and comments.}

\def \authorstyle { }

\bibliographystyle{plain}
\footnotesize{
}
%
%\bigskip
%\bigskip
%\bigskip
%\bigskip

%
%\clearpage

%Emails addresses

%{\tt Bertrand.Maury@math.u-psud.fr}

%{\tt salort.delphine@ijm.jussieu.fr}

%{\tt Christine.Vannier@math.u-psud.fr}

%\medskip
%% The data information below will be filled by AIMS editorial staff
%Received XXX; revised XXX.

\end{document}